\numberwithin{equation}{theorem}
\newcommand{\D}{\displaystyle}
\renewcommand{\m}{\mathfrak{m}}
\renewcommand{\n}{\mathfrak{n}}
\DeclareMathOperator{\depth}{depth}
\DeclareMathOperator{\sheafhom}{\scr{H}{\kern -2pt om}}
\DeclareMathOperator{\soc}{socle}
\DeclareMathOperator{\pd}{pd}
\theoremstyle{theorem}
\renewcommand{\O}{\mathcal O}
\begin{document}
\title{Local cohomology of Du Bois singularities and applications to families}
\author{Linquan Ma}
\author{Karl Schwede}
\author{Kazuma Shimomoto}
\address{Department of Mathematics\\ University of Utah\\ Salt Lake City\\ UT 84112}
\email{lquanma@math.utah.edu}
\address{Department of Mathematics\\ University of Utah\\ Salt Lake City\\ UT 84112}
\email{schwede@math.utah.edu}
\address{Department of Mathematics College of Humanities and Sciences \\ Nihon University \\ Setagaya-ku \\Tokyo 156-8550 \\Japan}
\email{shimomotokazuma@gmail.com}

\thanks{The first named author was supported in part by the NSF Grant DMS \#1600198 and NSF CAREER Grant DMS \#1252860/1501102.  }
\thanks{The second named author was supported in part by the NSF FRG Grant DMS \#1265261/1501115 and NSF CAREER Grant DMS \#1252860/1501102}
\thanks{The third named author is partially supported by Grant-in-Aid for Young Scientists (B) \# 25800028.}


\maketitle

\begin{abstract}
In this paper we study the local cohomology modules of Du Bois singularities. Let $(R,\m)$ be a local ring, we prove that if $R_{\red}$ is Du Bois, then $H_\m^i(R)\to H_\m^i(R_{\red})$ is surjective for every $i$. We find many applications of this result. For example we answer a question of Kov\'acs and the second author \cite{KovacsSchwedeInversionofadjunctionforDB} on the Cohen-Macaulay property of Du Bois singularities. We obtain results on the injectivity of $\Ext$ that provide substantial partial answers of questions in \cite{EisenbudMustataStillmanCohomologyontoricvarieties} in characteristic $0$.  These results can also be viewed as generalizations of the Kodaira vanishing theorem for Cohen-Macaulay Du Bois varieties. We prove results on the set-theoretic Cohen-Macaulayness of the defining ideal of Du Bois singularities, which are characteristic $0$ analogs and generalizations of results of Singh-Walther and answer some of their questions in \cite{SinghandWaltherArithmeticrankofSegreproducts}. We extend results of Hochster-Roberts on the relation between Koszul cohomology and local cohomology for $F$-injective and Du Bois singularities \cite{HochsterRobertsFrobeniusLocalCohomology}. We also prove that singularities of dense $F$-injective type deform.
\end{abstract}

\section{Introduction}

The notion of Du~Bois singularities was introduced by Steenbrink based on work of Du~Bois \cite{DuBoisMain} which itself was an attempt to localize Deligne's Hodge theory of singular varieties \cite{DeligneHodgeIII}.  Steenbrink studied them initially because families with Du~Bois fibers have remarkable base-change properties \cite{SteenbrinkCohomologicallyInsignificant}.  On the other hand, Du~Bois singularities have recently found new connections across several areas of mathematics \cite{KollarKovacsLCImpliesDB,KovacsSchwedeDBDeforms,LeeLocalAcyclicFibrationsAndDeRham, CortinasHaesemeyerWalkerWeibelNegativeBass,HuberJorderDifferentialFormsHTopology}.  In this paper we find numerous applications of Du~Bois singularities especially to questions of local cohomology.  Our key observation is as follows.

\begin{keylemma*}[\autoref{surjectivity in local cohomology}]
Suppose $(S, \fram)$ is a local ring essentially of finite type over $\bC$ and that $S_{\red} = S/\sqrt{0}$ has Du~Bois singularities.  Then
\[
H_\m^i(S)\to H_\m^i(S_{\red})
\]
is surjective for every $i$.
\end{keylemma*}
In fact, we obtain this surjectivity for Du~Bois pairs, but we leave the statement simple in the introduction.  Utilizing this lemma and related methods, we prove several results.

\begin{theoremA*}[\autoref{cor.CMNearCartier}]
Let $X$ be a reduced scheme and let $H\subseteq X$ be a Cartier divisor. If $H$ is Du Bois and $X\backslash H$ is Cohen-Macaulay, then $X$ is Cohen-Macaulay and hence so is $H$.
\end{theoremA*}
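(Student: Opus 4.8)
The plan is to reduce to a local statement and then run two nested inductions: an outer induction on $\dim X$ and, inside it, an induction on cohomological degree, with the Key Lemma supplying the one nonformal input. Since Cohen--Macaulayness is local, I may assume $X=\Spec R$ with $(R,\m)$ local and essentially of finite type over $\bC$, that $H=V(f)$ for a nonzerodivisor $f\in\m$ (an effective Cartier divisor is locally cut out by a nonzerodivisor), and that $R/fR$ is Du~Bois, hence reduced, so $\sqrt{fR}=fR$. It suffices to prove $R$ is Cohen--Macaulay: then $H$ is too, since $\depth_R(R/fR)=\depth R-1=\dim R-1=\dim H$. The case $\dim R\le 1$ is immediate (then $R/fR$ is a field, so $\m=fR$ and $R$ is a DVR), so set $d:=\dim R\ge 2$ and assume the theorem in all smaller dimensions. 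Because $R/fR$ is reduced of dimension $d-1\ge 1$, its maximal ideal is not associated, so $\depth R/fR\ge 1$ and $\depth R\ge 2$; equivalently $H^0_\m(R)=H^1_\m(R)=0$. This already settles $d=2$.

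Assume now $d\ge 3$. The next step, and the one that uses the hypothesis on $X\setminus H$, is to show $R$ is Cohen--Macaulay on its punctured spectrum. Let $\bp\ne\m$. If $f\notin\bp$ then $R_\bp$ is a localization of $R_f$, which is Cohen--Macaulay. If $f\in\bp$ then $(R_\bp,fR_\bp)$ satisfies the hypotheses of the theorem in dimension $\dim R_\bp<d$: $R_\bp$ is reduced, $fR_\bp$ is an effective Cartier divisor, $R_\bp/fR_\bp$ is a localization of the Du~Bois ring $R/fR$ (hence again Du~Bois), and $(R_\bp)_f$ is Cohen--Macaulay because each of its local rings is some $R_\bq$ with $f\notin\bq$, a localization of $R_f$; so $R_\bp$ is Cohen--Macaulay by the inductive hypothesis. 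Thus $R$ is Cohen--Macaulay off $\m$. Since $R$ is catenary and $\depth R\ge 2$, this also forces $R$ to be equidimensional: otherwise, by a catenarity computation in the domains $R/\bp$ for $\bp$ minimal, every top-dimensional component would meet every lower-dimensional component only at $\m$, so $\Spec R\setminus\{\m\}$ would be disconnected and $H^1_\m(R)$ nonzero. Being equidimensional, catenary, and Cohen--Macaulay on the punctured spectrum, $R$ has $H^i_\m(R)$ of finite length for every $i<d$.

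Finally I show $H^i_\m(R)=0$ for all $i<d$ by induction on $i$, the cases $i=0,1$ being settled above. Fix $i$ with $2\le i\le d-1$ and assume $H^{i-1}_\m(R)=0$; put $W:=R_f/R$. From $0\to R\to R_f\to W\to 0$, using $H^j_\m(R_f)=0$ for all $j$ (because $\m R_f=R_f$), one gets $H^{i-1}_\m(W)\cong H^i_\m(R)$ (of finite length) and $H^j_\m(W)=0$ for $j<i-1$. For each $n\ge 1$ the sequence $0\to R/f^nR\to W\xrightarrow{f^n}W\to 0$ (where $R/f^nR$ is identified with $\ker(f^n\mid W)=f^{-n}R/R$) gives, since $H^{i-2}_\m(W)=0$, an isomorphism $H^{i-1}_\m(R/f^nR)\cong\ker(f^n\mid H^{i-1}_\m(W))$, under which the natural surjection $R/f^nR\twoheadrightarrow R/fR=(R/f^nR)_{\red}$ induces multiplication by $f^{n-1}$. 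By the Key Lemma applied to $R/f^nR$, whose reduction $R/fR$ is Du~Bois, the map $H^{i-1}_\m(R/f^nR)\to H^{i-1}_\m(R/fR)$ is surjective; that is, $f^{n-1}\colon\ker(f^n\mid H^{i-1}_\m(W))\to\ker(f\mid H^{i-1}_\m(W))$ is surjective for every $n$. Hence $\ker(f\mid H^{i-1}_\m(W))\subseteq\bigcap_n f^n H^{i-1}_\m(W)$, which is $0$ by Krull's intersection theorem since $H^{i-1}_\m(W)\cong H^i_\m(R)$ has finite length. So $f$ is injective on $H^i_\m(R)$; but $H^i_\m(R)$ is $\m$-power torsion, hence $f$-power torsion, and an $f$-power-torsion module on which $f$ acts injectively vanishes. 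Therefore $H^i_\m(R)=0$, which completes both inductions.

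The step I expect to be the crux is establishing, in the second paragraph, that the $H^i_\m(R)$ with $i<d$ have finite length; this is what lets Krull's intersection theorem bite in the last step, and it is not formal. It requires reducing the punctured-spectrum Cohen--Macaulayness to lower dimension (the only place where Cohen--Macaulayness of $X\setminus H$ is genuinely used) and then ruling out non-equidimensionality. Some finiteness input of this kind is unavoidable: for a non-finitely-generated $f$-power-torsion module $M$, such as $R_f/R$ itself, the intersection $\bigcap_n f^nM$ need not vanish, so the final injectivity argument would collapse.
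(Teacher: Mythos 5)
Your proof is correct, and it rests on exactly the same nonformal input as the paper's: the Key Lemma applied to the thickenings $R/f^nR$, whose common reduction $R/fR$ is Du~Bois, together with a finite-length argument converting bijectivity of multiplication by $f$ on $H^i_\m(R)$ into vanishing. The organization, however, is genuinely different. The paper factors the statement through \autoref{theorem.SheafversionSurj} and \autoref{inversion of adjunction for DB pair}: the surjections $H^i_\m(R/f^nR)\twoheadrightarrow H^i_\m(R/fR)$ are first upgraded, via a direct limit over $n$ and an identification of the resulting injection with the connecting homomorphism, to the unconditional statement that the long exact sequence of $0\to R\xrightarrow{\cdot f}R\to R/fR\to 0$ breaks into short exact sequences (so $\cdot f$ is surjective on every $H^i_\m(R)$ with no finiteness hypothesis); finiteness enters only at the very end, by localizing at a generic point of the non-$S_k$ locus, which makes the module $S_k$ on the punctured spectrum for free and avoids any induction. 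You instead establish finite length at $\m$ up front --- via an induction on dimension to get Cohen--Macaulayness on the punctured spectrum and a connectedness argument for equidimensionality --- and then run an induction on cohomological degree, replacing the paper's direct-limit/connecting-map bookkeeping with the module $W=R_f/R$, the identification $H^{i-1}_\m(R/f^nR)\cong\ker(f^n\mid H^{i-1}_\m(W))$, and Krull intersection. Both are sound; the paper's route buys the more general $S_k$ statement for pairs and an exactness result of independent interest, while yours is self-contained for the corollary as stated and makes explicit the equidimensionality point that the paper leaves implicit in asserting that the relevant local cohomology modules have finite length.
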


This first consequence answers a question of Kov\'acs and the second author and should be viewed as a generalization of several results in \cite[Section 7]{KollarKovacsLCImpliesDB}.  In particular, we do not need a projective family.  This result holds if one replaces Cohen-Macaulay with \Sn{k} and also generalizes to the context of Du~Bois pairs as in \cite{KovacsSchwedeInversionofadjunctionforDB}.  Theorem A should also be viewed as a characteristic 0 analog of \cite[Proposition 2.13]{FedderWatanabe} and \cite[Corollary A.4]{HoriuchiMillerShimomoto}.  Theorem A also implies that if $X \setminus H$ has rational singularities and $H$ has Du~Bois singularities, then $X$ has rational singularities (see \autoref{cor.RatOutsidePlusDBinImpliesRat}), generalizing \cite[Theorem E]{KovacsSchwedeInversionofadjunctionforDB}.

Our next consequence of the key lemma is the following.
\begin{theoremB*}[\autoref{injectivity of Ext}]
Let $(R,\m)$ be a Gorenstein local ring essentially of finite type over $\mathbb{C}$, and let $I\subseteq R$ be an ideal such that $R/I$ has Du~Bois singularities. Then the natural map $\Ext^j_R(R/I, R)\to H_I^j(R)$ is injective for every $j$.
\end{theoremB*}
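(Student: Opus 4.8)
The plan is to deduce this from the Key Lemma (\autoref{surjectivity in local cohomology}) by combining the colimit description of local cohomology with local duality. Set $n=\dim R$. Since $R/I$ has Du~Bois singularities it is reduced, so $\sqrt{I^t}=\sqrt I=I$ and therefore $(R/I^t)_{\red}=R/I$ for every $t\ge 1$; this is exactly what will let us feed each $R/I^t$ into the Key Lemma.

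First I would use the standard identification $H_I^j(R)=\varinjlim_t \Ext^j_R(R/I^t,R)$, with transition maps induced by the surjections $R/I^{t+1}\twoheadrightarrow R/I^t$, under which the map in the statement is the structure map from the $t=1$ term of this colimit. Writing $\phi_{t}\colon\Ext^j_R(R/I,R)\to\Ext^j_R(R/I^t,R)$ for the resulting maps, the kernel of $\Ext^j_R(R/I,R)\to H_I^j(R)$ is the increasing union $\bigcup_t\ker\phi_t$, so it suffices to prove that $\phi_t$ is injective for every $j$ and every $t\ge 1$. Next I would dualize: $R$ is essentially of finite type over $\bC$, hence admits a dualizing complex, and being Gorenstein of dimension $n$ it satisfies local duality in the form of a natural isomorphism $\Ext^j_R(N,R)^{\vee}\cong H_\m^{n-j}(N)$ for finitely generated $N$, where $(-)^{\vee}=\Hom_R(-,E_R(R/\m))$. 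Since Matlis duality is faithfully exact, $\phi_t$ is injective if and only if $\phi_t^{\vee}$ is surjective, and local duality identifies $\phi_t^{\vee}$ with the map $H_\m^{n-j}(R/I^t)\to H_\m^{n-j}(R/I)$ induced by $R/I^t\twoheadrightarrow R/I$. Thus the theorem reduces to the surjectivity of $H_\m^{n-j}(R/I^t)\to H_\m^{n-j}(R/I)$ for all $j$ and $t$.

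Finally, this surjectivity is immediate from the Key Lemma applied to $S=R/I^t$: this is a local ring essentially of finite type over $\bC$ with $S_{\red}=R/I$ Du~Bois, so \autoref{surjectivity in local cohomology} gives that $H_\m^i(R/I^t)\to H_\m^i(R/I)$ is surjective for every $i$, in particular for $i=n-j$. The only genuine input here is the Key Lemma; the rest is formal duality, so I do not anticipate a real obstacle. The points that need a little care are the verification that the ``natural map'' of the statement is indeed the colimit structure map, and that the local duality isomorphism is available over $R$ itself (it is, since $R$ has a dualizing complex), so that no appeal to the completion of a Du~Bois singularity is required.
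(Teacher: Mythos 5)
Your proposal is correct and follows essentially the same route as the paper: apply the Key Lemma to $S=R/I^t$ (noting $(R/I^t)_{\red}=R/I$ since Du~Bois implies reduced), dualize via local duality to get injectivity of $\Ext^j_R(R/I,R)\to\Ext^j_R(R/I^t,R)$ for all $t$, and pass to the direct limit. The extra care you take with identifying the colimit structure map and with the form of local duality is fine but not a departure from the paper's argument.
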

This gives a partial characteristic $0$ answer to \cite[Question 6.2]{EisenbudMustataStillmanCohomologyontoricvarieties}, asking when such maps are injective. A special case of the analogous characteristic $p>0$ result was obtained by Singh-Walther \cite{SinghandWaltherlocalcohomologyandpure}. This result leads to an answer of \cite[Questioin 7.5]{DeStefaniNunezBetancourtFthresholdGradedrings} on the bounds on the projective dimension of Du Bois and log canonical singularities.
In the graded setting, we can prove a much stronger result on the injectivity of Ext:
\begin{theoremC*}[\autoref{thm.injectivityofExtforpuncturedDB}]
Let $(R,\m)$ be a reduced Noetherian $\mathbb{N}$-graded $(R_0=\mathbb{C})$-algebra with $\m$ the unique homogeneous maximal ideal. Suppose $R_P$ is Du Bois for all $P\neq\m$. Write $R=A/I$ where $A=\mathbb{C}[x_1,\dots,x_n]$ is a polynomial ring with $\deg x_i=d_i>0$ and $I$ is a homogeneous ideal. Then the natural degree-preserving map $\Ext_A^j(R, A)\to H_I^j(A)$ induces an injection
\[
\big[\Ext_A^j(R, A)\big]_{\geq -d}\hookrightarrow \big[H_I^j(A)\big]_{\geq -d}
\] for every $j$, where $d=\sum d_i$.
\end{theoremC*}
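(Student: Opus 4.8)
The plan is to follow the strategy behind Theorem~B (\autoref{injectivity of Ext}): transport the question through graded local duality, use the Du~Bois hypothesis to produce a surjectivity of local cohomology modules, and then dualize back and pass to a direct limit. The new feature is that, since $R$ is Du~Bois only away from $\m$, the surjectivity we can establish holds only in a restricted range of degrees, and that range is exactly what produces the cutoff $-d$.

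Since $A$ is a (weighted) polynomial ring, $\omega_A\cong A(-d)$ with $d=\sum d_i$, and graded local duality gives $\Ext^j_A(M,A)\cong {}^*\Hom_{\mathbb C}\big(H^{n-j}_\m(M),\mathbb C\big)(d)$, naturally in the finitely generated graded $A$-module $M$. Apply this with $M=A/I^t$ for all $t$ and use $H^j_I(A)=\varinjlim_t\Ext^j_A(A/I^t,A)$: the transition maps dualize to the natural restrictions $H^{n-j}_\m(A/I^t)\to H^{n-j}_\m(A/I^{t'})$ ($t\geq t'$), so $\ker\big(\Ext^j_A(R,A)\to H^j_I(A)\big)$ — being the increasing union of the submodules $\ker\big(\Ext^j_A(R,A)\to\Ext^j_A(A/I^t,A)\big)$ — has graded Matlis dual $\varprojlim_t\coker\big(H^{n-j}_\m(A/I^t)\to H^{n-j}_\m(R)\big)(-d)$. (These are Artinian modules, so $\varprojlim^1$ vanishes and $\coker$ commutes with $\varprojlim$.) Unwinding the twist, Theorem~C for a fixed $j$ becomes equivalent to the assertion that
\[
\varprojlim_t H^{i}_\m(A/I^t)\ \longrightarrow\ H^{i}_\m(R)\qquad(i:=n-j)
\]
is surjective in degrees $\leq 0$. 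So it suffices to prove this surjectivity for every $i$; the cases $i\leq 1$ are elementary (one may assume $\dim R>0$, so $H^0_\m(R)=0$, and then use the four term exact sequence relating $H^0_\m$, $H^1_\m$, and $H^0$ of twisted sheaves on $\operatorname{Proj}$).

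For $i\geq 2$, set $Y=\operatorname{Proj}R$ and $Y_t=\operatorname{Proj}(A/I^t)\subseteq\mathbb P:=\operatorname{Proj}A$, so $Y=Y_1=(Y_t)_{\mathrm{red}}$. The hypothesis that $R_P$ is Du~Bois for all $P\neq\m$ is exactly that $\operatorname{Spec}R\smallsetminus\{\m\}$ is Du~Bois, and hence — this being local on $Y$, with $\operatorname{Spec}R\smallsetminus\{\m\}\to Y$ smooth (a $\mathbb G_m$-bundle, at least \'etale-locally) — so is $Y$. Under Serre's identification $[H^{i}_\m(A/I^t)]_k\cong H^{i-1}(Y_t,\mathcal O_{Y_t}(k))$ ($i\geq 2$), the required surjectivity says that for each $k\leq 0$ the restriction
\[
\varprojlim_t H^{i-1}(Y_t,\mathcal O_{Y_t}(k))\ \longrightarrow\ H^{i-1}(Y,\mathcal O_{Y}(k))
\]
is onto, the source being the cohomology of $\mathcal O(k)$ on the formal completion $\widehat{\mathbb P}_Y$. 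For $k=0$ this is the Hodge-theoretic phenomenon underlying \autoref{surjectivity in local cohomology}: for a proper Du~Bois scheme $Y$ one has $\underline\Omega^0_Y\simeq\mathcal O_Y$, and degeneration of the Hodge--de~Rham spectral sequence of $\underline\Omega^\bullet_Y$ exhibits $H^{i-1}(Y,\mathcal O_Y)$ as a quotient of $H^{i-1}(Y,\mathbb C)$ through which the natural map from $\varprojlim_t H^{i-1}(Y_t,\mathcal O_{Y_t})$ factors, forcing it to be surjective. The twisted cases $k<0$ are then handled by the corresponding Kodaira-type surjectivity for Du~Bois singularities, or reduced to the case $k=0$ by cutting $Y$ down with general hyperplane sections (Du~Bois-ness is preserved under Bertini) and inducting on $\dim Y$; the same conclusion can also be read off from the pair $(\operatorname{Spec}A,\operatorname{Spec}R)$, which is a Du~Bois pair away from $\m$.

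The crux, and the only place real care is needed, is the \emph{sharpness} of the cutoff: the entire content of the theorem is the value $-d$, so the weighted-grading bookkeeping ($\omega_A=A(-d)$, Serre's twisted identification of $[H^\bullet_\m]_k$) must be matched precisely against the degree range in which the Du~Bois surjectivity holds, in particular across the (a~priori non-reduced) thickenings $A/I^t$ and the inverse limit in $t$. The remaining points — the low-degree cases $i=0,1$, the reduction of the twists $k<0$, and the Mittag-Leffler verification allowing cokernels to commute with $\varprojlim$ — are routine.
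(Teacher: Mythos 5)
Your reduction via graded local duality is sound and matches the paper's outer framework: with $\omega_A\cong A(-d)$, the theorem is equivalent to the surjectivity of $\big[H^{i}_\m(A/I^t)\big]_{\leq 0}\to\big[H^{i}_\m(R)\big]_{\leq 0}$ for all $t$ (your inverse-limit formulation, with the Mittag--Leffler remark, is an equivalent packaging). The gap is that you never actually prove this surjectivity in the degrees that matter. Your Hodge-theoretic argument (constant sheaf does not see thickenings, $\DuBois{Y_t}=\DuBois{Y}\simeq\O_Y$, Hodge--de Rham degeneration) is exactly the mechanism of \autoref{surjectivity in the non reduced setting}, but it only produces the \emph{untwisted} surjectivity, i.e.\ the degree-$0$ piece. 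The strictly negative degrees --- the twists $k<0$ in your Serre-correspondence translation --- are the entire content of the theorem, and for these the topological argument breaks down: $H^{i-1}(Y_t,\O_{Y_t}(k))$ genuinely depends on the thickening and has no interpretation through $H^{*}(Y,\mathbb C)$. Deferring them to an unnamed ``Kodaira-type surjectivity for Du~Bois singularities,'' a Bertini induction, or ``the pair $(\Spec A,\Spec R)$'' is not a proof; no such statement is available off the shelf, and any argument must use the degree restriction in an essential way, since the paper's example $R=\mathbb{C}[s^4,s^3t,st^3,t^4]$ shows the injectivity \emph{fails} in degrees $<-d$. You correctly identify the sharpness of the cutoff as ``the crux,'' but then declare it routine bookkeeping, which it is not.

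What the paper actually does at this point is twofold, and neither ingredient appears in your sketch in usable form. First, twists by (powers of) an ample line bundle are converted into untwisted statements by passing to a cyclic cover: this is \autoref{key injectivity}, whose Matlis dual gives the surjection $H^i_\m(A/I^t)\twoheadrightarrow \mathbb{H}^i_\m(\DuBois{R})$ in \emph{all} degrees. Second, \autoref{prop.CriterionforDB} computes the graded structure of the Du~Bois complex of $R$ via the Rees-algebra blowup of $\Spec R$ and a \v{C}ech analysis, showing $h^i(\DuBois{R})\cong[H^{i+1}_\m(R)]_{>0}$ for $i\geq 1$ and $h^0(\DuBois{R})/R\cong[H^1_\m(R)]_{>0}$; feeding this into the hypercohomology spectral sequence yields $[H^i_\m(R)]_{\leq 0}\cong[\mathbb{H}^i_\m(\DuBois{R})]_{\leq 0}$, i.e.\ the failure of Du~Bois-ness at $\m$ is concentrated in positive degrees. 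Combining the two gives the surjectivity in degrees $\leq 0$ that you need. Two further (smaller) inaccuracies: for non-standard gradings $\Spec R\setminus\{\m\}\to\Proj R$ is not a smooth $\mathbb{G}_m$-bundle (Du~Bois-ness of the charts $[R_f]_0$ must instead be obtained by the direct-summand argument, as in the paper's footnote), and the Serre-type identification $[H^i_\m(M)]_k\cong H^{i-1}(\Proj,\widetilde M(k))$ requires care when the $d_i$ are not all $1$ and $M$ is a thickening; the paper avoids $\Proj R$ and its twisting sheaves altogether for precisely this reason.
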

Theorem C leads to new vanishing for local cohomology for $\mathbb{N}$-graded isolated non-Du Bois singularities \autoref{thm.vanishing of local cohomolog for DB}, a generalization of the Kodaira vanishing theorem for Cohen-Macaulay Du Bois projective varieties.

Our next consequence of the key lemma answers a longstanding question, but first we give some background.  Du~Bois singularities are closely related to the notion of $F$-injective singularities (rings where Frobenius acts injectively on local cohomology of maximal ideals).  In particular, it is conjectured that $X$ has Du~Bois singularities if and only if its reduction to characteristic $p > 0$ has $F$-injective singularities for a Zariski dense set of primes $p \gg 0$ (this is called \emph{dense $F$-injective type}).  This conjecture is expected to be quite difficult as it is closely related to asking for infinitely many ordinary characteristic $p > 0$ reductions for smooth projective varieties over $\bC$ \cite{BhattSchwedeTakagiweakordinaryconjectureandFsingularity}.  On the other hand, several recent papers have studied how Du~Bois and $F$-injective singularities deform \cite{KovacsSchwedeDBDeforms,HoriuchiMillerShimomoto}.  We know that if a Cartier divisor $H \subseteq X$ has Du~Bois singularities, then $X$ also has Du~Bois singularities near $H$.  However, the analogous statement for $F$-injective singularities in characteristic $p > 0$ is open and has been since \cite{FedderFPureRat}.  In fact $F$-injective singularities were introduced because it was observed that Cohen-Macaulay $F$-injective deform in this way but $F$-pure singularities\footnote{We now know that $F$-pure singularities are analogs of log canonical singularities \cite{HaraWatanabeFRegFPure}.} do not.  We show that at least the property of having dense $F$-injective type satisfies such a deformation result, in other words that $F$-injectivity deforms when $p$ is large.

\begin{theoremD*}[\autoref{thm.FInjectiveTypeDeforms}]
Let $(R,\m)$ be a local ring essentially of finite type over $\mathbb{C}$ and let $x$ be a nonzerodivisor on $R$. Suppose $R/xR$ has dense $F$-injective type. Then for infinitely many $p>0$, the Frobenius action $x^{p-1}F$ on $H_{\m_p}^i(R_p)$ is injective for every $i$, where $(R_p,\m_p)$ is the reduction mod $p$ of $R$. In particular, $R$ has dense $F$-injective type.
\end{theoremD*}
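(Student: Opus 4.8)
The plan is to reduce the statement to the Key Lemma applied in a mixed‑characteristic or rather characteristic‑$p$ reduction, using the observation that $F$-injectivity of $R_p/x_pR_p$ can be detected from a surjectivity of local cohomology after applying Frobenius, exactly in the shape that \autoref{surjectivity in local cohomology} provides. First I would spread out: choose a finitely generated $\bZ$‑subalgebra $A \subseteq \bC$ over which $R$, the element $x$, and all the relevant data (a log resolution, or the Du~Bois complex of $R/xR$) are defined, so that for all primes $p$ in a dense open (indeed Zariski dense) set we get reductions $R_p$, $x_p$, with $x_p$ a nonzerodivisor on $R_p$ and $R_p/x_pR_p$ having $F$-injective singularities — this is the hypothesis that $R/xR$ has dense $F$-injective type. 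I would also arrange that $R_p/x_pR_p$ is reduced for these $p$ (shrinking the dense set), so that $(R_p/x_pR_p)_{\red} = R_p/x_pR_p$.

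Next, the key point: I claim that for such $p$, the Frobenius action $x_p^{p-1}F$ on $H^i_{\m_p}(R_p)$ is injective for all $i$. Consider the short exact sequence $0 \to R_p \xrightarrow{x_p} R_p \to R_p/x_pR_p \to 0$ and its Frobenius‑twisted analog; the map $x_p^{p-1}F\colon H^i_{\m_p}(R_p)\to H^i_{\m_p}(R_p)$ factors through $H^i_{\m_p}(R_p) \to H^i_{\m_p}(R_p/x_p^pR_p)$ composed with the natural maps, and its kernel is controlled by the cokernel of $H^i_{\m_p}(R_p)\to H^i_{\m_p}(R_p/x_pR_p)$ together with the $F$-injectivity of $R_p/x_pR_p$. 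Concretely, the standard argument (as in Fedder's original deformation argument for Cohen–Macaulay $F$-injective rings, and as in \cite{HoriuchiMillerShimomoto}) shows that $x_p^{p-1}F$ is injective on $H^i_{\m_p}(R_p)$ provided (i) $F$ is injective on $H^i_{\m_p}(R_p/x_pR_p)$, which holds by $F$-injectivity, and (ii) the natural map $H^i_{\m_p}(R_p)\to H^i_{\m_p}(R_p/x_pR_p)$ is surjective. So everything comes down to establishing (ii).

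For (ii) I would invoke the Key Lemma — but in characteristic $p$, which is not what \autoref{surjectivity in local cohomology} literally states. The resolution is to run the reduction mod $p$ machinery in the other direction: the surjectivity $H^i_{\m}(R/xR$‑style$)\to H^i_{\m}((R/xR)_{\red})$ provided by the Key Lemma over $\bC$ — or rather the underlying statement that allows one to compare $H^i_{\m}(R)$ with $H^i_{\m}(R/xR)$ when $R/xR$ is Du~Bois — spreads out and specializes, so that the analogous surjectivity $H^i_{\m_p}(R_p)\to H^i_{\m_p}(R_p/x_pR_p)$ holds for a dense set of $p$. Here I need $R/xR$ to be Du~Bois, which follows from the hypothesis that $R/xR$ has dense $F$-injective type together with \cite[the Du~Bois/$F$-injective correspondence, e.g.]{BhattSchwedeTakagiweakordinaryconjectureandFsingularity,SchwedeFInjectiveAreDuBois} — $F$-injective type implies Du~Bois. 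Then the Key Lemma over $\bC$ gives $H^i_{\m}(R) \to H^i_{\m}(R/xR)$ surjective (one needs $R$ reduced here, which one checks from $x$ being a nonzerodivisor and $R/xR$ reduced, or by passing to $R_{\red}$ and noting $x$ stays a nonzerodivisor), and spreading out this surjectivity of finitely generated modules over $A$ yields (ii) for a dense set of primes $p$.

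Finally, having (i) and (ii) for a dense set of $p$, the factorization argument gives that $x_p^{p-1}F$ is injective on $H^i_{\m_p}(R_p)$ for all $i$ for those $p$; in particular $F$ itself is injective on $H^i_{\m_p}(R_p)$ (since $x_p^{p-1}F$ injective forces $F$ injective), so $R_p$ is $F$-injective for a dense set of $p$, i.e. $R$ has dense $F$-injective type. The main obstacle I anticipate is the compatibility of the two reduction‑mod‑$p$ processes: one must ensure that the \emph{same} dense set of primes $p$ simultaneously witnesses the $F$-injective type of $R/xR$ \emph{and} carries the specialization of the Key Lemma's surjectivity, and that the module maps whose surjectivity we are spreading out are genuinely the reductions of the characteristic‑$0$ maps. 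This is a matter of carefully choosing the base ring $A$ and checking that localization, completion (if needed for local cohomology), and the formation of $H^i_{\m}$ all commute with the specialization $A \to A/\mathfrak{p} = \kappa(\mathfrak{p})$ for $\mathfrak{p}$ in a dense open set — standard, but the bookkeeping is the substance of the proof. A secondary point to handle with care is the non‑reduced case: a priori $R$ itself need not be reduced, so one should either argue that $H^i_{\m}(R) \to H^i_{\m}(R_{\red})$ is surjective (again by the Key Lemma, since $R_{\red}$ — being a quotient of the Du~Bois‑in‑codimension‑appropriate‑sense ring — or rather since we only need $R/xR$ Du~Bois) and reduce to $R_{\red}$, where $x$ remains a nonzerodivisor because $x$ is a nonzerodivisor on $R$ and $R/xR$ is reduced.
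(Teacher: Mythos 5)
Your overall architecture (reduce to a Fedder-type diagram chase in characteristic $p$, feeding in a characteristic-$0$ statement coming from the Du~Bois property of $R/xR$, which itself comes from dense $F$-injective type) matches the paper, but the central step (ii) is wrong, and it is wrong in a way that the argument cannot be repaired as stated. Surjectivity of the natural map $H^i_{\m_p}(R_p)\to H^i_{\m_p}(R_p/x_pR_p)$ for all $i$ is essentially never true: in the long exact sequence attached to $0\to R_p\xrightarrow{\cdot x_p}R_p\to R_p/x_pR_p\to 0$, surjectivity of that map kills the connecting homomorphism, hence makes $\cdot x_p$ injective on $H^{i+1}_{\m_p}(R_p)$; since local cohomology is $\m_p$-torsion this forces $H^{i+1}_{\m_p}(R_p)=0$, so requiring (ii) for all $i$ would force all higher local cohomology of $R_p$ to vanish. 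The same objection applies to your characteristic-$0$ claim that ``the Key Lemma gives $H^i_\m(R)\to H^i_\m(R/xR)$ surjective'': \autoref{surjectivity in local cohomology} compares a ring with a quotient by an ideal inside the nilradical, not with $R/xR$ for a nonzerodivisor $x$, and the asserted surjectivity is false except in trivial cases. What the deformation argument actually needs is the opposite splitting of the long exact sequence: injectivity of $H^i_\m(R/xR)\to H^{i+1}_\m(R)$, equivalently surjectivity of $\cdot x$ on $H^{i+1}_\m(R)$, so that one gets short exact sequences $0\to H^i_{\m_\kappa}(R_\kappa/xR_\kappa)\to H^{i+1}_{\m_\kappa}(R_\kappa)\xrightarrow{\cdot x}H^{i+1}_{\m_\kappa}(R_\kappa)\to 0$ compatible with $F$, $x^{p-1}F$, $F$; then $F$-injectivity of the special fiber plus a socle argument gives injectivity of $x^{p-1}F$. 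The paper obtains this splitting in characteristic $0$ (\autoref{theorem.SheafversionSurj}) precisely by applying the Key Lemma to the \emph{nilpotent} thickenings $R/x^nR\twoheadrightarrow R/xR$ (where $x$ is nilpotent, so the non-reduced version of the lemma applies) and passing to a limit to identify $\varinjlim H^i_\m(R/x^nR)$ with $H^{i+1}_\m(R)$.

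Two further gaps in the transfer to characteristic $p$. First, local cohomology modules are not finitely generated, so one cannot ``spread out this surjectivity of finitely generated modules'' as you propose; the paper instead Matlis-dualizes over a regular ring $T$ surjecting onto $R$, getting injectivity of $\cdot x$ on the finitely generated modules $\Ext^i_T(R,T)$, spreads \emph{those} out over $A$ (with generic freeness of kernels and cokernels), and dualizes back after reduction mod $\frn$. Second, to conclude that $R/xR$ is Du~Bois you cannot simply cite the finite-type result of \cite{SchwedeFInjectiveAreDuBois}: here $R/xR$ is only local essentially of finite type, and one cannot control, as $p$ varies, the neighborhoods on which the reductions are $F$-injective; the paper has to prove a local variant (\autoref{thm.DBdenseF-injectivetypeLocal}) for exactly this reason. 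Your closing concern about compatibility of the two dense sets of primes is legitimate but secondary; the real missing content is the correct direction of the local cohomology comparison and the duality step that makes it specialize.
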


Our final result is a characteristic $0$ analog of a strengthening of the main result of \cite{HochsterRobertsFrobeniusLocalCohomology}, and we can give a characteristic $p > 0$ generalization as well.

\begin{theoremE*}[\autoref{analogue of Hochster-Roberts}]
Let $R$ be a Noetherian $\mathbb{N}$-graded $k$-algebra, with $\m$ the unique homogeneous maximal ideal. Suppose $R$ is equidimensional and Cohen-Macaulay on $\Spec R-\{\m\}$. Assume one of the following:
\begin{enumerate}
\item $k$ has characteristic $p>0$ and $R$ is $F$-injective.
\item $k$ has characteristic $0$ and $R$ is Du Bois.
\end{enumerate}
Then $\big[H^r(\underline{x}, R)\big]_0\cong H_\m^r(R)$ for every $r<n=\dim R$ and every homogeneous system of parameters $\underline{x}=x_1,\dots,x_n$, where $H^r(\underline{x}, R)$ denotes the $r$-th Koszul cohomology of $\underline{x}$. In other words, it is not necessary to take a direct limit when computing the local cohomology!
\end{theoremE*}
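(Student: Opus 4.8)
The plan is to reduce the statement to the Key Lemma together with a careful analysis of the Koszul-to-\v{C}ech (local cohomology) comparison. Recall that for a system of parameters $\underline{x} = x_1,\dots,x_n$, the local cohomology $H_\m^r(R)$ is the direct limit $\varinjlim_t H^r(\underline{x}^t, R)$ of the Koszul cohomologies over the maps induced by multiplication by $\underline{x}$, and in the graded setting these maps are degree-preserving if we place $H^r(\underline{x}^t,R)$ in the appropriate internal degree. The claim ``no direct limit is needed'' amounts to showing that the natural map $H^r(\underline{x}, R) \to H_\m^r(R)$ is an isomorphism in internal degree $0$ for $r < n$; equivalently, that each transition map $H^r(\underline{x}^t, R) \to H^r(\underline{x}^{t+1}, R)$ is an isomorphism in the relevant single graded strand, and that the relevant strand of $H^r(\underline{x},R)$ is exactly $\big[H^r(\underline{x},R)\big]_0$.

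First I would set up the characteristic $p > 0$ case (i), which is the heart of the matter; case (ii) will follow by reduction to characteristic $p$ using standard spreading-out, since both $F$-injectivity and the Cohen-Macaulay-on-the-punctured-spectrum hypothesis are inherited by a dense set of reductions of a Du Bois ring (this is where Du Bois enters via the known relationship between Du Bois and dense $F$-injective type, or more directly via the Key Lemma applied to reductions). In case (i), the Frobenius acts compatibly on the Koszul cohomologies $H^r(\underline{x},R)$ and on $H_\m^r(R)$; $F$-injectivity says Frobenius is injective on $H_\m^r(R)$. The key point is that since $R$ is Cohen-Macaulay and equidimensional on the punctured spectrum, the modules $H^r(\underline{x},R)$ for $r < n$ are supported only at $\m$, hence are finite length, and moreover by local duality / the structure of Koszul homology they live in a bounded range of internal degrees. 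One then shows that the transition map $H^r(\underline{x},R)\to H^r(\underline{x}^2,R)\to\cdots$ can only increase internal degrees, so on any fixed strand it eventually stabilizes; combined with injectivity of Frobenius on the limit $H_\m^r(R)$ — and the fact that Frobenius shifts the strand from $j$ to $pj$, so the only strand it fixes is $j=0$ — one deduces that the degree-$0$ strand already injects into $H_\m^r(R)$. Surjectivity onto $\big[H_\m^r(R)\big]_0$ follows because any class in the limit is represented at some finite stage, and a degree count (or a second application of the Frobenius-injectivity plus the fact that the later strands $\big[H^r(\underline{x}^t,R)\big]$ contributing to degree $0$ are forced to come from degree $0$ at stage one) shows nothing new is added. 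This is essentially the argument of \cite{HochsterRobertsFrobeniusLocalCohomology}, upgraded to handle all $r < n$ rather than a single cohomological degree.

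The characteristic-$0$ case (ii) I would then obtain either by the reduction-mod-$p$ passage above — noting that an isomorphism of finitely generated graded modules descends from a dense set of reductions — or, preferably, by a direct argument using the Key Lemma: for $R$ Du Bois and equidimensional, Cohen-Macaulay on the punctured spectrum, one knows $H_\m^i(R) \to H_\m^i(R_{\red})$-type surjectivities and, more to the point, the Du Bois complex $\underline{\Omega}^0_R$ controls the cohomology so that $H_\m^r(R)$ for $r<n$ agrees with its degree-$0$ part via the mixed Hodge structure, exactly as in the proof of Theorem C (\autoref{thm.injectivityofExtforpuncturedDB}) where the cutoff $d = \sum d_i$ degenerates to $0$ when the grading is standard-free on the Koszul side. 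I would make this precise by invoking the graded local duality identification of $\big[H_\m^r(R)\big]_j$ with a piece of $\Ext$ and then citing Theorem C (or its proof) to kill the strands $j \neq 0$.

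The main obstacle I expect is the bookkeeping in case (i): tracking internal degrees through the Koszul transition maps and the Frobenius action simultaneously, and in particular verifying that the stabilization of strands happens \emph{before} any degree other than $0$ could contribute — i.e., ruling out the a priori possibility that a class born in some positive internal degree at a late Koszul stage maps isomorphically onto part of $\big[H_\m^r(R)\big]_0$. Handling this cleanly requires the equidimensionality hypothesis to guarantee that $H^r(\underline{x},R)$ ($r<n$) has no components of the ``wrong'' dimension whose Frobenius behavior is uncontrolled, and it is the place where one genuinely uses that $R$ is Cohen-Macaulay on all of $\Spec R \setminus \{\m\}$ and not merely in codimension one. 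Once that is in place, the rest is the limit argument and the degree shift $j \mapsto pj$ under Frobenius, which forces the fixed strand to be $j = 0$.
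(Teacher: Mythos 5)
There is a genuine gap. You have correctly identified \emph{half} of the paper's argument: under either hypothesis one first shows that $H_\m^r(R)=\big[H_\m^r(R)\big]_0$ for all $r<n$ (in case (a) this is immediate from finite length plus the degree shift $j\mapsto pj$ under an injective Frobenius; in case (b) it follows from \autoref{prop.CriterionforDB} and \autoref{thm.vanishing of local cohomolog for DB}, exactly as you suggest). But the actual content of the theorem is the passage from this concentration statement about the \emph{limit} $H_\m^r(R)$ to the statement about the single Koszul stage $H^r(\underline{x},R)$, and your proposal does not supply that bridge. The paper does it by a characteristic-free duality argument (\autoref{char free theorem in analog of Hochster-Roberts}): concentration in internal degree $0$ forces $R$ to be Buchsbaum by Schenzel's criterion, so $\tau_{>-n}\omega_R^\mydot$ is quasi-isomorphic to a complex of $k$-vector spaces in internal degree $0$; since the terms of the Koszul cocomplex are $R(e)$ with $e>0$ in positive cohomological degrees, taking the degree-$0$ strand of $\myR\Hom_R(K^\mydot,\tau_{>-n}\omega_R^\mydot)$ recovers $\tau_{>-n}\omega_R^\mydot$ itself, and dualizing gives $\big[H^r(\underline{x},R)\big]_0\cong H_\m^r(R)$ for $r<n$. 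Nothing in your sketch plays this role.

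The replacement you propose for case (a) --- tracking Frobenius through the transition maps $H^r(\underline{x}^t,R)\to H^r(\underline{x}^{t+1},R)$ --- is essentially the original Hochster--Roberts argument, and it requires $F$-\emph{purity}: purity gives injectivity of the Frobenius-induced maps on the finite Koszul stages, which is what lets one conclude that nothing in degree $0$ dies or is born after stage one. Mere $F$-injectivity only controls the Frobenius action on the limit $H_\m^r(R)$; an element of $\ker\big(H^r(\underline{x},R)\to H_\m^r(R)\big)$ in degree $0$ maps under Frobenius into the corresponding kernel at stage $p$, and no contradiction results. This is precisely why the paper's proof is, as it notes, ``quite different'' from \cite{HochsterRobertsFrobeniusLocalCohomology}. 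Your fallback for case (b), reducing to case (a) mod $p$, is also not available: it would require the reductions of a Du~Bois ring to be $F$-injective for a dense set of primes, which is the open weak ordinarity conjecture \cite{MustataSrinivasOrdinary,BhattSchwedeTakagiweakordinaryconjectureandFsingularity}, not a theorem. To repair the proof, keep your verification that $H_\m^r(R)$ is concentrated in degree $0$ and then prove the characteristic-free statement of \autoref{char free theorem in analog of Hochster-Roberts} by duality rather than by a limit-and-Frobenius argument.
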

In fact, we prove a more general result \autoref{char free theorem in analog of Hochster-Roberts} which does not require any $F$-injective or Du Bois conditions, from which Theorem E follows immediately thanks to our injectivity and vanishing results \autoref{thm.injectivityofExtforpuncturedDB}, \autoref{thm.vanishing of local cohomolog for DB}.

\vskip 9pt
\noindent
{\it Acknowledgements:}  The authors are thankful to S\'andor Kov\'acs, Shunsuke Takagi and the referee for valuable discussions and comments on a previous draft of the paper.  We would like to thank Pham Hung Quy for discussions which motivate the proof of \autoref{inversion of adjunction for DB pair}. We also thank Zsolt Patakfalvi, Anurag K. Singh and Kevin Tucker for valuable discussions.


\section{Preliminaries}

Throughout this paper, all rings will be Noetherian and all schemes will be Noetherian and separated.  When in equal characteristic 0, we will work with rings and schemes that are essentially of finite type over $\bC$.  Of course, nearly all of our results also hold over all other fields of characteristic zero by base change.

\subsection{Du~Bois singularities}

We give a brief introduction to Du~Bois singularities and pairs.  For more details see for instance \cite{KovacsSchwedeDuBoisSurvey} and \cite{KollarKovacsSingularitiesBook}.  Frequently we will work in the setting of pairs in the Du~Bois sense.

\begin{definition}
Suppose $X$ and $Z$ are schemes of essentially finite type over $\bC$.  By a \emph{pair} we will mean the combined data of $(X, Z)$.  We will call the pair \emph{reduced} if both $X$ and $Z$ are reduced.
\end{definition}

Suppose that $X$ is a scheme essentially of finite type over $\bC$.  Associated to $X$ is an object $\DuBois{X} \in D^b_{\coherent}(X)$ with a map $\O_X \to \DuBois{X}$ functorial in the following way.  If $f : Y \to X$ is a map of schemes then we have a commutative square
\[
\xymatrix{
\O_X \ar[d] \ar[r] & \DuBois{X} \ar[d]\\
\myR f_* \O_Y \ar[r] & \myR f_* \DuBois{Y}\\
}
\]
If $X$ is non-reduced, then $\DuBois{X} = \DuBois{X_{\red}}$ by definition.  To define $\DuBois{X}$ in general let $\pi : X_{\mydot} \to X$ be a hyperresolution and set $\DuBois{X} = \myR \pi_* \O_{X_{\mydot}}$ (alternatively see \cite{SchwedeEasyCharacterization}).

If $Z \subseteq X$ is a closed subscheme, then we define $\DuBois{X,Z}$ as the object completing the following distinguished triangle
\[
\DuBois{X,Z} \to \DuBois{X} \to \DuBois{Z} \xrightarrow{+1}.
\]
We observe that there is an induced map $\sI_{Z \subseteq X} \to \DuBois{X,Z}$. We also note that by this definition and the fact $\DuBois{X} = \DuBois{X_{\red}}$, we have $\DuBois{X, Z} = \DuBois{X_{\red}, Z_{\red}}$.

\begin{definition}[Du~Bois singularities]
We say that $X$ has Du~Bois singularities if the map $\O_X \to \DuBois{X}$ is a quasi-isomorphism.  If $Z \subseteq X$ is a closed subscheme we say that $(X, Z)$ has Du~Bois singularities if the map $\sI_{Z \subseteq X} \to \DuBois{X,Z}$ is a quasi-isomorphism.
\end{definition}

\begin{remark}
It is clear that Du Bois singularities are reduced. In general, a pair $(X,Z)$ being Du Bois does not necessarily imply $X$ or $Z$ is reduced. However, if a pair $(X, Z)$ is Du Bois and $X$ is reduced, then so is $Z$ \cite[Lemma 2.9]{KovacsSchwedeInversionofadjunctionforDB}.
\end{remark}

\subsection{Cyclic covers of non-reduced schemes}

In this paper we will need to take cyclic covers of non-reduced schemes.  We will work in the following setting.  We assume what follows is well known to experts but we do not know a reference in the generality we need (see \cite[Section 2.4]{KollarMori} or \cite[Subsection 2.1.1]{deFernexEinMustataBook}).  Note also that the reason we are able to work with non-reduced schemes is because our $\sL$ is actually locally free and \emph{not} just a reflexive rank-1 sheaf.

\begin{setting}
\label{set.GeneralCyclicCover}
Suppose $X$ is a scheme of finite type over $\bC$ (typically projective).  Suppose also that $\sL$ is a line bundle (typically semi-ample).  Choose a (typically general) global section $s \in H^0(X, \sL^n)$ for some $n > 0$ (typically $n \gg 0$) and form the sheaf of rings:
\[
\sR(X, \sL, s) = \O_X \oplus \sL^{-1} \oplus \cdots \oplus \sL^{-n+1}
\]
where for $i,j < n$ and $i+j>n$, the multiplication $\sL^{-i} \otimes \sL^{-j} \to \sL^{-i-j+n}$ is performed by the formula $a \otimes b \mapsto a b s$. We define $\nu : Y = Y_{\sL, s} = \sheafspec \sR(X, \sL, s) \to X$.  Note we did not assume that $s$ was nonzero or even locally a nonzero divisor.
\end{setting}

Now let us work locally on an affine chart $U$ trivializing $\sL$, where $U= \Spec R \subseteq X$ with corresponding $\nu^{-1}(U) = \Spec S$.  Fix an isomorphism $\sL|_U \cong \O_U$ and write
\[
S = R \oplus Rt \oplus Rt^2 \oplus\cdots\oplus Rt^{n-1}
\]
where $t$ is a dummy variable used to help keep track of the degree.  The choice of the section $s|_U \in H^0(X, \sL^n)$ is the same as the choice of map $\O_X \to \sL^n$ (send $1$ to the section $s$).  If $s$ is chosen to be general and $\sL^n$ is very ample then this map is an inclusion, but it is not injective in general.  Working locally where we have fixed $\sL|_U = \O_U$, we also have implicitly chosen $\sL^n = \O_U$ and hence we have just specified that $t^n = s|_U \in \Gamma(U, \sL^n) = \Gamma(U, \O_U)$.  In other words
\[
S = R[t]/\langle t^n - s\rangle.
\]
In particular, it follows that $\nu$ is a finite map.

\begin{lemma}
\label{lem.FunctorialCyclicCover}
The map $\nu$ is functorial with respect to taking closed subschemes of $X$.  In particular if $Z \subseteq X$ is a closed subscheme and $\sL$ and $s$ are as above, then we have a commutative diagram
\[
\xymatrix{
Y_{\sL, s} \ar@{=}[r] & \sheafspec \sR(X, \sL, s) \ar[r]^-{\nu_X} & X \\
W_{\sL|_Z, s|_Z}\ar@{=}[r] & \sheafspec \sR(Z, \sL \otimes \O_Z, s|_Z) \ar[r]_-{\nu_Z} \ar@{^{(}->}[u] & Z. \ar@{^{(}->}[u]
}
\]
Furthermore, $W_{\sL|_Z, s|_Z} =  \pi^{-1}(Z)$ is the scheme theoretic preimage of $Z$.
\end{lemma}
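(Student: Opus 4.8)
The plan is to reduce the entire statement to the single observation that $\sR(Z,\sL\otimes\O_Z,s|_Z)$ is \emph{canonically} the base change $\sR(X,\sL,s)\otimes_{\O_X}\O_Z$ of the sheaf of $\O_X$-algebras $\sR(X,\sL,s)$ along the closed immersion $\iota\colon Z\hookrightarrow X$. Granting this, the lemma is formal: taking relative $\sheafspec$ turns base change of quasi-coherent algebras into fiber products, so $W_{\sL|_Z,s|_Z}\cong Y_{\sL,s}\times_X Z=\nu_X^{-1}(Z)$, the scheme-theoretic preimage; the projection to $Y_{\sL,s}$ is the base change of $\iota$, hence a closed immersion, and the projection to $Z$ is $\nu_Z$, which is exactly the commutative square asserted.

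So the real content is the identification of the two sheaves of graded $\O_Z$-algebras. On the underlying $\O_Z$-modules it is immediate: $\bigl(\bigoplus_{i=0}^{n-1}\sL^{-i}\bigr)\otimes_{\O_X}\O_Z=\bigoplus_{i=0}^{n-1}\bigl(\sL^{-i}\otimes_{\O_X}\O_Z\bigr)=\bigoplus_{i=0}^{n-1}(\sL\otimes\O_Z)^{-i}$, using that forming tensor powers of a line bundle commutes with pullback. For the multiplication I would unwind the definition: the map $\sL^{-i}\otimes\sL^{-j}\to\sL^{-i-j+n}$ for $i+j\geq n$ is, under the canonical isomorphism $\sL^{-i}\otimes\sL^{-j}\cong\sL^{-i-j}$, nothing but tensoring the identity of $\sL^{-i-j}$ with the section $s$ regarded as an $\O_X$-linear map $\O_X\to\sL^n$. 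Applying $-\otimes_{\O_X}\O_Z$ replaces $s$ by its image in $H^0\bigl(Z,(\sL\otimes\O_Z)^n\bigr)$, which is precisely $s|_Z$; hence the base-changed multiplication is the one used to define $\sR(Z,\sL\otimes\O_Z,s|_Z)$. Equivalently, on an affine chart $U=\Spec R$ trivializing $\sL$ with $Z\cap U=\Spec R/J$, this is the evident isomorphism $\bigl(R[t]/(t^n-s)\bigr)\otimes_R R/J\cong (R/J)[t]/(t^n-\bar s)$, and one checks that the local reduction maps $R[t]/(t^n-s)\to (R/J)[t]/(t^n-\bar s)$ glue because a change of trivialization of $\sL$ rescales the dummy variable $t$ by the relevant transition function, compatibly on $X$ and on $Z$.

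The only step that is not purely formal is this compatibility of the twisted multiplications with restriction to $Z$ — equivalently, the gluing of the local reduction maps — and even this is routine once one remembers that a global section of $\sL^n$ is the same datum as an $\O_X$-linear map $\O_X\to\sL^n$, and that such maps pull back along $\iota$; there is no cohomological or geometric input beyond bookkeeping. I expect essentially all the work to be in writing down this bookkeeping cleanly, and once it is done the three conclusions of the lemma (functoriality, the closed immersion $W_{\sL|_Z,s|_Z}\hookrightarrow Y_{\sL,s}$, and its identification with $\nu_X^{-1}(Z)$) drop out together from the fiber-product description.
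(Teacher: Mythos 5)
Your proof is correct and follows essentially the same route as the paper: the paper's argument also rests on the identity $\sR(X,\sL,s)\otimes_{\O_X}\O_Z=\sR(Z,\sL|_Z,s|_Z)$ (which yields the surjection of $\O_X$-algebras and hence the closed immersion, the commutative square, and the scheme-theoretic preimage statement all at once). You simply spell out the compatibility of the twisted multiplications with restriction, which the paper treats as obvious.
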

\begin{proof}
The first statement follows since $\sR(X, \sL, s) = \O_X \oplus \sL^{-1} \oplus \ldots \oplus \sL^{-n+1}$ surjects onto $\sR(Z, \sL|_Z, s|_Z) =\O_Z \oplus \sL|_Z^{-1} \oplus \ldots \oplus \sL|_Z^{-n+1}$ in the obvious way.  For the second statement we simply notice that
\[
\sR(X, \sL, s) \otimes_{\O_X} \O_Z = \sR(Z, \sL|_Z, s|_Z).
\]
\end{proof}

\begin{lemma}
\label{lem.GeneralCyclicCoverOfReducedIsReduced}
Suppose we are working in \autoref{set.GeneralCyclicCover} and that $X$ is projective and also reduced (respectively normal), $\sL^n$ is globally generated and $s$ is chosen to be general.  Then $Y = Y_{\sL, s}$ is also reduced (respectively normal).
\end{lemma}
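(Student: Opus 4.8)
The plan is to work locally and use the Bertini-type genericity of $s$. First I would reduce to the affine setting: cover $X$ by finitely many affine opens $U = \Spec R$ on which $\sL$ is trivialized, so that, as computed just above, $\nu^{-1}(U) = \Spec S$ with $S = R[t]/\langle t^n - s\rangle$ where $s = s|_U \in R$. Reducedness and normality are both local properties and can be checked on this cover, so it suffices to show each such $S$ is reduced (resp. normal) for general choice of the global section.

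The key point is a Bertini/generic-smoothness argument. Consider the universal family: let $V = H^0(X, \sL^n)$ (or the sublinear system giving global generation), and form the incidence scheme $\{(y, s) : y \in Y_{\sL,s}\} \to V$. Over the locus of $X$ where $\sL^n$ is globally generated — which is all of $X$ by hypothesis — the section $s$ together with the fiber coordinate $t$ cut out $Y$; the hypersurface $t^n = s$ inside the total space of $\sL$ (a line bundle over $X$) moves in a basepoint-free linear system on that total space as $s$ varies, because $\sL^n$ globally generated pulls back to a globally generated (indeed, the relative $\mathcal{O}(n)$-twisted) system there. So by generic smoothness / Bertini (we are in characteristic $0$), for general $s$ the scheme $Y_{\sL,s}$ is smooth away from $\Sing X$ pulled back, and in particular generically reduced. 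Combined with $Y \to X$ being finite and flat of degree $n$ (since $t^n - s$ is monic in $t$, hence $S$ is free of rank $n$ over $R$), one gets that $Y$ is Cohen-Macaulay whenever $X$ is (here for reducedness we just need $S_1$), and $Y$ satisfies Serre's condition $R_0$; so $Y$ is reduced. For the normal case: $X$ normal means $X$ is $R_1 + S_2$; since $\nu$ is finite flat, $S_2$ on $X$ pulls back to $S_2$ on $Y$, and the Bertini argument upgrades the $R_0$ statement to $R_1$ (the generic section avoids making new codimension-$\leq 1$ singularities over the smooth locus of $X$, and over $\Sing X$, which has codimension $\geq 2$, the preimage still has codimension $\geq 2$). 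Hence $Y$ is $R_1 + S_2$, i.e.\ normal.

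I would organize the write-up as: (i) reduce to affine charts and record $S = R[t]/(t^n-s)$, finite free of rank $n$; (ii) observe $\nu$ finite flat, so Serre conditions $S_k$ transfer from $X$ to $Y$; (iii) run the Bertini argument on the total space of $\sL$ to get that a general $Y$ is smooth over the smooth locus of $X$, hence $R_0$ (resp.\ $R_1$); (iv) conclude via Serre's criterion.

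The main obstacle is step (iii): making the Bertini argument clean in the possibly non-reduced or singular ambient $X$. The subtlety is that classical Bertini gives smoothness of the general member over the smooth locus of the ambient, but one must check that the linear system $|t^n - s|$ on the total space of $\sL$ really is basepoint-free there — this uses precisely the hypothesis that $\sL^n$ is globally generated (so that the sections $s$ separate points and tangent directions enough in the $X$-directions, while the $t$-direction is handled automatically by the monic term $t^n$). One also has to be slightly careful that "general $s$" can be chosen to work simultaneously for all finitely many charts $U$ and for the codimension bookkeeping over $\Sing X$; since there are finitely many conditions, a general $s$ in the (irreducible) parameter space satisfies all of them. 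I expect this to be the only place requiring real care; everything else is the standard transfer of Serre conditions along a finite flat morphism.
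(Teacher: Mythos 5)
Your proposal is correct and has the same overall architecture as the paper's proof: verify Serre's criterion ($R_0+S_1$ for reduced, $R_1+S_2$ for normal), get the $S_k$ conditions for free from the fact that $S=R[t]/\langle t^n-s\rangle$ is a finite free $R$-module (so depth is preserved, after localizing at the maximal ideals of the semi-local ring $S$), and get the $R_k$ conditions from the genericity of $s$. The one place you diverge is the $R_0$ step. The paper handles it by a completely elementary local computation: at a generic point the local ring of $X$ is a field $K$, a general $s$ has nonzero image in $K$, and $K[t]/\langle t^n-s\rangle$ is then a product of fields because $t^n-s$ is separable in characteristic zero. You instead run Bertini/generic smoothness on the total space of $\sL$; this also works (and is essentially what the paper itself invokes, via \cite[Lemma 2.51]{KollarMori}, for the $R_1$ condition in the normal case), but it is heavier than needed for $R_0$ and requires two small care points you should make explicit: the family $\{t^n-s\}_{s\in H^0(X,\sL^n)}$ is an \emph{affine} (inhomogeneous) family of sections of $p^*\sL^n$ rather than a linear system through the origin, so one should apply generic smoothness to the incidence variety directly (your freeness-from-global-generation check of the evaluation map does exactly this); and one needs the generic points of $X$ to be smooth points, which holds because $X$ is reduced and of finite type over the perfect field $\bC$. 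With those two remarks your argument is complete; the paper's route for $R_0$ simply avoids Bertini altogether.
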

\begin{proof}
To show that $Y$ is reduced we will show it is \Sn{1} and \Rn{0}.  We work locally on an open affine chart $U = \Spec R \subseteq X$.

To show it is \Rn{0}, it suffices to consider the case $R = K$ is a field.  Because $s$ was picked generally, we may assume that the image of $s$ in $K$ is nonzero (we identify $s$ with its image).  Then we need to show that $K[t]/\langle t^n - s \rangle$ is a product of fields.  But this is obvious since we are working in characteristic zero.

Next we show it is \Sn{1}.  Indeed if $(R, \mathfrak{m})$ is a local ring of depth $\geq 1$ then obviously $S = R[t]/\langle t^n - s \rangle$ has depth $\geq 1$ as well since it is a free $R$-module.  Finally, the depth condition is preserved after localizing at the maximal ideals of the semi-local ring of $S$.

This proves the reduced case.  The normal case is well known but we sketch it.  The \Rn{1} condition follows from \cite[Lemma 2.51]{KollarMori} and the fact that $s$ is chosen generally, utilizing Bertini's theorem.  The \Sn{2} condition follows in the same way that \Sn{1} followed above.
\end{proof}

\subsection{$F$-pure and $F$-injective singularities} Du Bois singularities are conjectured to be the characteristic $0$ analog of $F$-injective singularities \cite{SchwedeFInjectiveAreDuBois}, \cite{BhattSchwedeTakagiweakordinaryconjectureandFsingularity}. In this short subsection we collect some definitions about singularities in positive characteristic. Our main focus are $F$-pure and $F$-injective singularities.

A local ring $(R,\m)$ is called {\it $F$-pure} if the Frobenius endomorphism $F$: $R\rightarrow R$ is pure.\footnote{A map of $R$-modules $N\rightarrow N'$ is pure if for every $R$-module $M$ the map $N\otimes_RM\rightarrow N'\otimes_RM$ is injective. This implies that $N\rightarrow N'$ is injective, and is weaker than the condition that $0\rightarrow N\rightarrow N'$ be split.} Under mild conditions, for example when $R$ is $F$-finite, which means the Frobenius map $R\xrightarrow{F}R$ is a finite map, $R$ being $F$-pure is equivalent to the condition that the Frobenius endomorphism $R\xrightarrow{F} R$ is split \cite[Corollary 5.3]{HochsterRobertsFrobeniusLocalCohomology}. The Frobenius endomorphism on $R$ induces a natural Frobenius action on each local cohomology module $H_\m^i(R)$ and we say a local ring is {\it $F$-injective} if this natural Frobenius action on $H_\m^i(R)$ is injective for every $i$ \cite{FedderFPureRat}. This holds if $R$ is $F$-pure \cite[Lemma 2.2]{HochsterRobertsFrobeniusLocalCohomology}. For some other basic properties of $F$-pure and $F$-injective singularities, see \cite{HochsterRobertsFrobeniusLocalCohomology,FedderFPureRat,EnescuHochsterTheFrobeniusStructureOfLocalCohomology}.

\section{The Cohen-Macaulay property for families of Du Bois pairs}

We begin with a lemma which we assume is well known to experts, indeed it is explicitly stated in \cite[Corollary 6.9]{KollarKovacsSingularitiesBook}.  However, because many of the standard references also include implicit reducedness hypotheses, we include a careful proof that deduces it from the reduced case.  Of course, one could also go back to first principals but we believe the path we take below is quicker.

\begin{lemma}
\label{surjectivity in the non reduced setting}
Let $X$ be a projective scheme over $\mathbb{C}$ and let $Z\subseteq X$ be a closed subscheme ($X, Z$ are not necessarily reduced). Then the natural map $$H^i(X, \scr{I}_Z)\to\mathbb{H}^i(X_{\red}, \underline{\Omega}_{X_{\red}, Z_{\red}}^0)\cong \mathbb{H}^i(X, \underline{\Omega}_{X, Z}^0)$$ is surjective for every $i\in\mathbb{Z}$.
\end{lemma}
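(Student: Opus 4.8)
The plan is to reduce the non-reduced statement to the reduced case, where it is classical (this is essentially the surjectivity one gets from Hodge theory / the degeneration of the relevant spectral sequence, or from the splitting of $\O_{X_{\red}} \to \DuBois{X_{\red}}$ up to the pieces of the Hodge filtration — in any case it is what is cited from \cite[Corollary 6.9]{KollarKovacsSingularitiesBook}). The first observation is that, by the very definition of $\DuBois{X,Z}$ via the distinguished triangle together with the identity $\DuBois{X,Z} = \DuBois{X_{\red}, Z_{\red}}$ already noted in the preliminaries, the right-hand isomorphism $\bH^i(X_{\red}, \DuBois{X_{\red}, Z_{\red}}) \cong \bH^i(X, \DuBois{X,Z})$ is automatic, so the content is the surjectivity of $H^i(X, \sI_Z) \to \bH^i(X, \DuBois{X,Z})$.

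The key step is to compare the two pairs $(X, Z)$ and $(X_{\red}, Z_{\red})$ directly. Let $j : X_{\red} \hookrightarrow X$ be the closed immersion and $Z_{\red} = j^{-1}(Z)$ scheme-theoretically; there is a commutative square relating $\sI_{Z \subseteq X}$ to $j_* \sI_{Z_{\red} \subseteq X_{\red}}$, fitting into a ladder of distinguished triangles
\[
\xymatrix{
\sI_{Z \subseteq X} \ar[r] \ar[d] & \O_X \ar[r] \ar[d] & \O_Z \ar[r]^-{+1} \ar[d] & {}\\
\myR j_* \sI_{Z_{\red} \subseteq X_{\red}} \ar[r] & \myR j_* \O_{X_{\red}} \ar[r] & \myR j_* \O_{Z_{\red}} \ar[r]^-{+1} & {}
}
\]
(here $\myR j_* = j_*$ since $j$ is a closed immersion, so no derived functor is needed). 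On the other hand, the functoriality of $\O \to \DuBois{\phantom X}$ stated in the excerpt gives a map from the first triangle to the triangle defining $\DuBois{X,Z}$, and similarly on $X_{\red}$; since $\DuBois{X} = \DuBois{X_{\red}}$ and $\DuBois{Z} = \DuBois{Z_{\red}}$, passing to hypercohomology produces a commutative diagram in which the map $H^i(X,\sI_Z) \to \bH^i(X,\DuBois{X,Z})$ factors as
\[
H^i(X, \sI_Z) \longrightarrow H^i(X_{\red}, \sI_{Z_{\red}}) \longrightarrow \bH^i(X_{\red}, \DuBois{X_{\red}, Z_{\red}}) = \bH^i(X, \DuBois{X,Z}).
\]
The second arrow is surjective by the reduced case of the lemma. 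So it suffices to show the first arrow, $H^i(X, \sI_Z) \to H^i(X_{\red}, \sI_{Z_{\red}})$, is surjective.

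For that I would use the nilpotent filtration: let $\sN = \sqrt{0} \subseteq \O_X$ be the nilradical, so $\sN^{N} = 0$ for some $N$. Filtering $\O_X$ by the ideals $\sN^k$ and intersecting with $\sI_{Z \subseteq X}$, it is enough to show that at each graded step the relevant cohomology map is surjective, and in the end the cokernel of $H^i(X,\sI_Z)\to H^i(X_{\red},\sI_{Z_{\red}})$ is squeezed between cohomology groups of sheaves supported on $X_{\red}$ annihilated by $\sN$ — i.e. coherent sheaves on a \emph{projective} scheme. The point is simply that $\O_X \to \O_{X_{\red}}$ is surjective with nilpotent kernel $\sN$, hence (restricting to $Z$) $\O_Z \to \O_{Z_{\red}}$ is surjective with kernel a quotient of $\sN$; a short diagram chase on the snake lemma applied to $0 \to \sI_Z \to \O_X \to \O_Z \to 0$ and $0 \to \sI_{Z_{\red}} \to \O_{X_{\red}} \to \O_{Z_{\red}} \to 0$ shows that $\sI_Z \to \sI_{Z_{\red}}$ is surjective as a map of sheaves, with kernel $\sK \subseteq \sN$. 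Then the long exact sequence of $0 \to \sK \to \sI_Z \to \sI_{Z_{\red}} \to 0$ shows $H^i(X,\sI_Z) \to H^i(X_{\red}, \sI_{Z_{\red}})$ is surjective provided $H^{i+1}(X, \sK) $ surjects appropriately — but this is not automatic, so instead I would run the argument the other way: because the kernel $\sK$ is nilpotent, induction on $N$ (the nilpotency index) reduces to the case $\sN^2 = 0$, and there one uses that the obstruction lives in an $H^{i+1}$ of a sheaf on which the nilpotents act trivially, i.e. a sheaf pulled back from $X_{\red}$; combined with the surjectivity already known on $X_{\red}$ for the pair $(X_{\red}, Z_{\red})$ and for $X_{\red}$ alone (Du Bois), the obstruction vanishes.

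I expect the main obstacle to be exactly this last bookkeeping: making the dévissage on the nilpotent filtration precise so that at each stage the relevant map is controlled by the \emph{reduced} Du Bois surjectivity (for both $X_{\red}$ and the pair $(X_{\red}, Z_{\red})$) rather than circularly invoking the statement one is trying to prove. The cleanest route is probably to avoid the filtration altogether: observe that $\sI_{Z \subseteq X} \to \sI_{Z_{\red} \subseteq X_{\red}}$ is a surjection of coherent sheaves whose kernel $\sK$ is annihilated by $\sN$ hence is a coherent $\O_{X_{\red}}$-module, and then the surjectivity of $H^i(X,\sI_Z)\to H^i(X_{\red},\sI_{Z_{\red}})$ for \emph{all} $i$ simultaneously follows from the fact that the composite $H^i(X,\sI_Z)\to H^i(X,\DuBois{X,Z})$ is already surjective (reduced case) and this composite factors through $H^i(X_{\red},\sI_{Z_{\red}})$ as displayed above — so the middle map is forced to be surjective. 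In fact this shows the cleanest version of the argument only needs the reduced case of the very lemma plus the factorization, and the nilpotent analysis is only needed to justify that the factorization is through the right groups; I would present it in that order.
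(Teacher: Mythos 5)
There is a genuine gap, and it sits at the heart of your argument. You factor the map in question as
\[
H^i(X, \sI_Z) \xrightarrow{\ \alpha\ } H^i(X_{\red}, \sI_{Z_{\red}}) \xrightarrow{\ \beta\ } \bH^i(X_{\red}, \DuBois{X_{\red}, Z_{\red}}),
\]
note that $\beta$ is surjective by the reduced case, and conclude that ``it suffices to show $\alpha$ is surjective.'' But surjectivity of $\beta$ alone says nothing about the composite $\beta\circ\alpha$, and surjectivity of $\alpha$ is both unavailable and, where true, essentially equivalent to what is being proved: for $Z=\emptyset$ and $X_{\red}$ Du~Bois, ``$H^i(X,\O_X)\to H^i(X_{\red},\O_{X_{\red}})$ surjective'' \emph{is} the global form of the paper's Key Lemma, and here there is no Du~Bois hypothesis at all. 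Your d\'evissage on the nilpotent filtration cannot rescue this: the obstruction to lifting through $0 \to \sK \to \sI_Z \to \sI_{Z_{\red}} \to 0$ lives in $H^{i+1}(X, \sK)$, and the fact that $\sK$ is an $\O_{X_{\red}}$-module does nothing to kill its higher cohomology. Your closing ``cleanest route'' is then outright circular: you invoke ``the composite $H^i(X,\sI_Z) \to \bH^i(X, \DuBois{X,Z})$ is already surjective (reduced case),'' but the reduced case only gives surjectivity of $\beta$; surjectivity of that composite is precisely the statement of the lemma for the non-reduced pair.

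The fix is to factor on the other side of $H^i(X,\sI_Z)$, which is what the paper does. Set $U = X\setminus Z$. The Hodge-theoretic input from the reduced case (\cite[Lemma 2.17]{KovacsSchwedeInversionofadjunctionforDB}) is that the composite $H^i_c(U_{\red},\bC)\to H^i(X_{\red},\sI_{Z_{\red}})\to \bH^i(X_{\red},\DuBois{X_{\red},Z_{\red}})$ is surjective. Since $H^i_c(U,\bC)\cong H^i_c(U_{\red},\bC)$ (the constant sheaf does not see nilpotents) and $\DuBois{X,Z}=\DuBois{X_{\red},Z_{\red}}$, the same surjection also factors as $H^i_c(U,\bC)\to H^i(X,\sI_Z)\to \bH^i(X,\DuBois{X,Z})$, and a surjective composite forces its \emph{second} factor to be surjective, which is the claim. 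In short, you need a group mapping \emph{into} $H^i(X,\sI_Z)$ that already surjects onto the target and is insensitive to the scheme structure; $H^i(X_{\red},\sI_{Z_{\red}})$, sitting on the wrong side of the map, can never play that role.
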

\begin{proof}  Note that if the result holds for $\bC$, it certainly also holds for other fields of characteristic zero.
The isomorphism in the statement of the lemma follows from the definition.  For the surjectivity, we consider the following commutative diagram where we let $U=X\backslash Z$.  Note that the rows are not exact.
\[\xymatrix{
H^i_c(U_{\red}, \mathbb{C}) \ar[r] & H^i(X_{\red}, \scr{I}_{Z_{\red}}) \ar[r] & \mathbb{H}^i(X_{\red}, \underline{\Omega}_{X_{\red}, Z_{\red}}^0)\\
H^i_c(U, \mathbb{C}) \ar[r] \ar[u]^{\cong} & H^i(X, \scr{I}_Z) \ar[r] \ar[u] & \mathbb{H}^i(X, \underline{\Omega}_{X, Z}^0) \ar[u]^{\cong}
}\]
The composite map in the top horizontal line is a surjection by \cite[Lemma 2.17]{KovacsSchwedeInversionofadjunctionforDB} or \cite[Corollary 4.2]{KovacsDBPairsAndVanishing}.  The vertical isomorphism on the left holds because the constant sheaf $\mathbb{C}$ does not see the non-reduced structure.  Likewise for the vertical isomorphism on the right where $\DuBois{X,Z}$ does not see the non-reduced structure.  The diagram then shows that $H^i(X, \scr{I}_Z)\to\mathbb{H}^i(X_{\red}, \underline{\Omega}_{X_{\red}, Z_{\red}}^0)$ is a surjection.
\end{proof}

Next we prove the key injectivity lemma for possibly non-reduced pairs. The proof is essentially the same as in \cite{KovacsSchwedeInversionofadjunctionforDB} or \cite{KovacsSchwedeDBDeforms}. We reproduce it here carefully because we need it in the non-reduced setting.

\begin{lemma}
\label{key injectivity}
Let $X$ be a scheme of essentially finite type over $\mathbb{C}$ and $Z\subseteq X$ a closed subscheme ($X$ and $Z$ are not necessarily reduced). Then the natural map
$$h^j(\underline{\omega}^\mydot_{X,Z})\hookrightarrow h^j(\myR\sheafhom_{\O_X}(\scr{I}_Z, \omega^\mydot_X))$$
is injective for every $j\in\mathbb{Z}$, where $\underline{\omega}^\mydot_{X,Z}=\myR\sheafhom_{\O_X}(\underline{\Omega}_{X, Z}^0, \omega_X^\mydot)$.
\end{lemma}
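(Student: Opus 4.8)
The plan is to recognize this statement as Grothendieck dual to the surjectivity of \autoref{surjectivity in the non reduced setting}. Write $\phi\colon\underline{\omega}^\mydot_{X,Z}\to\myR\sheafhom_{\O_X}(\sI_Z,\omega^\mydot_X)$ for the map gotten by applying $\myR\sheafhom_{\O_X}(-,\omega^\mydot_X)$ to the natural map $\sI_Z\to\DuBois{X,Z}$; the goal is to show each $h^j(\phi)$ is an injection of coherent sheaves. The argument will have four steps: (i) reduce to $X$ projective over $\bC$; (ii) twist by a high power of an ample line bundle so that the relevant hypercohomology spectral sequences degenerate; (iii) apply Serre duality to turn the desired injectivity into a surjectivity statement for hypercohomology; and (iv) kill the twist by passing to a cyclic cover, where \autoref{surjectivity in the non reduced setting} applies directly.

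For (i): injectivity of a map of coherent sheaves is local, so we may replace $X$ by an affine open and thus assume $X$ is affine of finite type over $\bC$; then, compactifying $X$ inside a projective $\bC$-scheme and replacing $Z$ by its scheme-theoretic closure, and using that $\sI_Z$, $\DuBois{X,Z}$, $\omega^\mydot_X$ and the map $\sI_Z\to\DuBois{X,Z}$ are all compatible with restriction to opens, we reduce to $X$ projective over $\bC$. For (ii) and (iii): fix an ample line bundle $\sL$ on $X$. A map of coherent sheaves is injective if and only if it is injective on $H^0$ after twisting by $\sL^m$ for every $m\gg0$, and for $m\gg0$ Serre vanishing degenerates the hypercohomology spectral sequences of $\underline{\omega}^\mydot_{X,Z}\otimes\sL^m$ and of $\myR\sheafhom_{\O_X}(\sI_Z,\omega^\mydot_X)\otimes\sL^m$, so that $H^0(X,h^j(C)\otimes\sL^m)=\mathbb H^j(X,C\otimes\sL^m)$ for these complexes. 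Since $\myR\sheafhom_{\O_X}(\sG,\omega^\mydot_X)\otimes\sL^m=\myR\sheafhom_{\O_X}(\sG\otimes\sL^{-m},\omega^\mydot_X)$, Grothendieck--Serre duality on the projective $\bC$-scheme $X$ provides a natural isomorphism $\mathbb H^j\big(X,\myR\sheafhom_{\O_X}(\sF,\omega^\mydot_X)\big)\cong\mathbb H^{-j}(X,\sF)^{\vee}$ for $\sF\in D^b_{\coherent}(X)$, which identifies $\mathbb H^j(X,\phi\otimes\sL^m)$ with the $\bC$-linear dual of
\[
\mathbb H^{-j}(X,\sI_Z\otimes\sL^{-m})\longrightarrow\mathbb H^{-j}(X,\DuBois{X,Z}\otimes\sL^{-m}).
\]
Hence it suffices to prove that this last map, induced by $\sI_Z\to\DuBois{X,Z}$, is surjective for every $m\gg0$.

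For (iv): given such an $m$, choose $n>m$ with $\sL^n$ globally generated and a general section $s\in H^0(X,\sL^n)$, and let $\nu\colon Y=Y_{\sL,s}\to X$ be the cyclic cover of \autoref{set.GeneralCyclicCover}; then $Y$ is projective and $\nu$ is finite and flat with $\nu_*\O_Y=\bigoplus_{i=0}^{n-1}\sL^{-i}$. By the projection formula the natural map $\mathbb H^{-j}(Y,\nu^*\sI_Z)\to\mathbb H^{-j}(Y,\nu^*\DuBois{X,Z})$ is the direct sum over $0\le i\le n-1$ of the twisted maps $\mathbb H^{-j}(X,\sI_Z\otimes\sL^{-i})\to\mathbb H^{-j}(X,\DuBois{X,Z}\otimes\sL^{-i})$, so the map we need is a direct summand of it. Now by flatness $\nu^*\sI_Z=\sI_W$ for $W=\nu^{-1}(Z)$ the scheme-theoretic preimage, which by \autoref{lem.FunctorialCyclicCover} is the cyclic cover of $Z$ for $\sL|_Z$ and $s|_Z$; and for general $s$ the natural map $\nu^*\DuBois{X,Z}\to\DuBois{Y,W}$ is a quasi-isomorphism compatible with the maps from $\sI_W$. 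Under these identifications the map $\mathbb H^{-j}(Y,\nu^*\sI_Z)\to\mathbb H^{-j}(Y,\nu^*\DuBois{X,Z})$ becomes $\mathbb H^{-j}(Y,\sI_W)\to\mathbb H^{-j}(Y,\DuBois{Y,W})$, which is surjective by \autoref{surjectivity in the non reduced setting} applied to the projective scheme $Y$ and its (possibly non-reduced) closed subscheme $W$; hence so is each of its direct summands, in particular the one we need.

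\emph{Main obstacle.} Apart from the routine homological bookkeeping, the one point with real content is the quasi-isomorphism $\nu^*\DuBois{X,Z}\qis\DuBois{Y,W}$ for a general section. This is a Bertini-type statement for the Du~Bois complex of a general member of a base-point-free linear system --- here the divisor $Y=\{t^n=s\}$ inside the total space of $\sL$, which is smooth over $X$ --- and its only subtle aspect is running it when $X$ and $Z$ are non-reduced: one uses \autoref{lem.GeneralCyclicCoverOfReducedIsReduced} to identify $Y_{\red}$ and $W_{\red}$ with the cyclic covers of $X_{\red}$ and $Z_{\red}$, so that $\DuBois{Y}=\DuBois{Y_{\red}}=\nu^*\DuBois{X}$ and similarly for the pair, together with the fact that \autoref{surjectivity in the non reduced setting} was already proved allowing non-reduced schemes. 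One should also be careful to avoid circularity: the local-cohomology Key Lemma \autoref{surjectivity in local cohomology} is to be deduced from the present lemma, not used in its proof. (Alternatively, step (iv) could be replaced by invoking directly the twisted form of Kov\'acs's vanishing theorem that underlies \autoref{surjectivity in the non reduced setting}, namely surjectivity of $H^i(X,\sL^{-m}\otimes\sI_Z)\to\mathbb H^i(X,\sL^{-m}\otimes\DuBois{X,Z})$ for $\sL$ ample.)
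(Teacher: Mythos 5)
Your proposal is correct and follows essentially the same route as the paper: reduce to the projective case, use a general cyclic cover to realize the twisted maps $\bH^{j}(X,\sI_Z\otimes\sL^{-i})\to\bH^{j}(X,\DuBois{X,Z}\otimes\sL^{-i})$ as summands of the surjection supplied by \autoref{surjectivity in the non reduced setting} on the cover, then apply Grothendieck--Serre duality and degenerate the spectral sequence for large twists. The only cosmetic difference is that the paper packages the cyclic-cover step as the pushforward decomposition $\eta_*\DuBois{Y,W}\cong\bigoplus_i\DuBois{X,Z}\otimes\sL^{-i}$ (citing \cite{KovacsSchwedeInversionofadjunctionforDB} together with \autoref{lem.FunctorialCyclicCover} and \autoref{lem.GeneralCyclicCoverOfReducedIsReduced}) rather than as $\nu^*\DuBois{X,Z}\qis\DuBois{Y,W}$ plus the projection formula, and you correctly flag that identification as the one substantive input.
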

\begin{proof}
The question is local and compatible with restricting to an open subset, hence we may assume that $X$ is projective with ample line bundle $\scr{L}$. Let $s\in H^0(X, \scr{L}^n)$ be a general global section for some $n \gg 0$ and let $\eta$: $Y=\mathbf{Spec}\oplus_{i=0}^{n-1}\scr{L}^{-i}\to X$ be the $n$-th cyclic cover corresponding to $s$. Set $W=\eta^{-1}(Z)$. Then for $n\gg 0$ and $s$ general, the restriction of $\eta$ to $W$ is the cyclic cover $W=\mathbf{Spec}\oplus_{i=0}^{n-1}\scr{L}|_Z^{-i}\to Z$ by \autoref{lem.FunctorialCyclicCover}.  Likewise $\eta$ also induces the corresponding cyclic covers of the closed subschemes $Y_{\red}\to X_{\red}$ and $W_{\red}\to Z_{\red}$ by \autoref{lem.FunctorialCyclicCover} and \autoref{lem.GeneralCyclicCoverOfReducedIsReduced}. We have $\eta_*\scr{I}_W=\oplus_{i=0}^{n-1}\scr{I}_Z\otimes\scr{L}^{-i}$ and
\[
\eta_*\underline{\Omega}_{Y, W}^0\cong\eta_*\underline{\Omega}_{Y_{\red}, W_{\red}}^0\cong\oplus_{i=0}^{n-1}{\underline{\Omega}}_{X_{\red}, Z_{\red}}^0\otimes\scr{L}^{-i}\cong\oplus_{i=0}^{n-1}{\underline{\Omega}}_{X, Z}^0\otimes\scr{L}^{-i}
\]
where the second isomorphism is \cite[Lemma 3.1]{KovacsSchwedeInversionofadjunctionforDB} (see also \cite[Lemma 3.1]{KovacsSchwedeDBDeforms}). Since \autoref{surjectivity in the non reduced setting} implies that $H^j(Y, \scr{I}_W)\twoheadrightarrow \bH^j(Y, \underline{\Omega}_{Y, W}^0)$ is surjective for every $j\in\mathbb{Z}$, we know that $H^j(X, \scr{I}_Z\otimes\scr{L}^{-i})\twoheadrightarrow \bH^j(X, \underline{\Omega}_{X, Z}^0\otimes\scr{L}^{-i})$ is surjective for every $i\geq0$ and $j\in\mathbb{Z}$.

Applying Grothendieck-Serre duality we obtain an injection
$$\mathbb{H}^j(X, \underline{\omega}_{X,Z}^\mydot\otimes\scr{L}^i)\hookrightarrow\mathbb{H}^j(X, \myR\sheafhom_{\O_X}(\scr{I}_Z, \omega_X^\mydot)\otimes\scr{L}^i)$$ for all $i\geq 0$ and $j\in\mathbb{Z}$. Since $\scr{L}$ is ample, for $i\gg0$ the spectral sequence computing the above hypercohomology degenerates. Hence for $i\gg0$ we get
$$H^0(X, h^j(\underline{\omega}_{X,Z}^\mydot)\otimes\scr{L}^i)\hookrightarrow H^0(X, h^j(\myR\sheafhom_{\O_X}(\scr{I}_Z, \omega_X^\mydot))\otimes\scr{L}^i).$$ But again since $\scr{L}$ is ample, the above injection for $i \gg 0$ implies the injection $h^j(\underline{\omega}_{X,Z})\hookrightarrow h^j(\myR\sheafhom_{\O_X}(\scr{I}_Z, \omega_X^\mydot))$.
\end{proof}

Next we prove the key lemma stated in the introduction (and we do the generalized pair version). It follows immediately from the above injectivity, \autoref{key injectivity}. For simplicity we will use $(S,S/J)$ to denote the pair $(\Spec S, \Spec S/J)$.

\begin{lemma}
\label{surjectivity in local cohomology}
Let $(S,\m)$ be a local ring essentially of finite type over $\mathbb{C}$ and let $J\subseteq S$ be an ideal. Suppose further that $S' = S/N$ where $N \subseteq S$ is an ideal contained in the nilradical (for instance, $N$ could be the nilradical and then $S' = S_{\red}$).  Suppose $(S', S'/JS')$ is a Du Bois pair. Then the natural map $H_\m^i(J)\to H_\m^i(JS')$ is surjective for every $i$.  In particular, if $S_{\red}$ is Du Bois, then $H_\m^i(S)\to H_\m^i(S_{\red})$ is surjective for every $i$.
\end{lemma}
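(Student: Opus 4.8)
The plan is to deduce the lemma from the injectivity statement of \autoref{key injectivity} by Grothendieck local duality. Since $S$ is essentially of finite type over $\bC$ it has a dualizing complex; fix one, written $\omega_S^\mydot$ on $X:=\Spec S$ and normalized (after completing $S$, which changes neither the hypothesis nor the conclusion) so that local duality reads $H_\m^i(M)^\vee\cong h^{-i}\bigl(\myR\Hom_S(M,\omega_S^\mydot)\bigr)$, naturally in the finitely generated $S$-module $M$; here $(-)^\vee$ is Matlis duality. A map of modules over a local ring is surjective exactly when its Matlis dual is injective, and the natural map $H_\m^i(J)\to H_\m^i(JS')$ is induced by the surjection $J\twoheadrightarrow JS'=J/(J\cap N)$. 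Hence it suffices to prove that the natural map
\[
h^{-i}\bigl(\myR\Hom_S(JS',\omega_S^\mydot)\bigr)\longrightarrow h^{-i}\bigl(\myR\Hom_S(J,\omega_S^\mydot)\bigr)
\]
is injective for every $i$.

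The second step is to recognize this map as the one furnished by \autoref{key injectivity}. Set $Z:=\Spec S/J$. As recalled in the preliminaries, $\DuBois{X,Z}$ depends only on $X_{\red}$ and $Z_{\red}$; since $N$ lies in the nilradical, $\Spec S'$ has the same reduction as $X$ and $\Spec S'/JS'$ has the same reduction as $Z$, so $\DuBois{X,Z}\qis\DuBois{\Spec S',\Spec S'/JS'}$. The hypothesis that $(S',S'/JS')$ is a Du Bois pair means precisely that the canonical map $\widetilde{JS'}=\sI_{\Spec S'/JS'\subseteq\Spec S'}\to\DuBois{\Spec S',\Spec S'/JS'}$ is a quasi-isomorphism. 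Composing, $\DuBois{X,Z}\qis\widetilde{JS'}$, and---this is the point requiring a little care---under this identification the canonical map $\sI_{Z\subseteq X}=\widetilde J\to\DuBois{X,Z}$ is carried to the natural surjection $\widetilde J\to\widetilde{JS'}$. One checks this by comparing the defining morphism of triangles $(\sI_Z\to\O_X\to\O_Z)\to(\DuBois{X,Z}\to\DuBois{X}\to\DuBois{Z})$ with the corresponding one for $(\Spec S',\Spec S'/JS')$, using that $\O_X\to\DuBois{X}$ factors through $\O_X\to\O_{\Spec S'}$, and $\O_Z\to\DuBois{Z}$ through $\O_Z\to\O_{\Spec S'/JS'}$, since $\DuBois{W}=\DuBois{W_{\red}}$ for every scheme $W$. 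Consequently $\underline{\omega}^\mydot_{X,Z}=\myR\sheafhom_{\O_X}(\DuBois{X,Z},\omega_X^\mydot)\qis\myR\sheafhom_{\O_X}(\widetilde{JS'},\omega_X^\mydot)$, whose $(-i)$-th cohomology sheaf has stalk $h^{-i}(\myR\Hom_S(JS',\omega_S^\mydot))$ at $\m$; likewise the stalk of $h^{-i}(\myR\sheafhom_{\O_X}(\sI_Z,\omega_X^\mydot))$ at $\m$ is $h^{-i}(\myR\Hom_S(J,\omega_S^\mydot))$, and the map between these stalks induced by $\sI_Z\to\DuBois{X,Z}$ is exactly the map displayed above.

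Finally, \autoref{key injectivity} applied to $X=\Spec S$ and $Z=\Spec S/J$ gives that $h^j(\underline{\omega}^\mydot_{X,Z})\hookrightarrow h^j(\myR\sheafhom_{\O_X}(\sI_Z,\omega_X^\mydot))$ is injective for every $j$; taking stalks at $\m$ and $j=-i$ yields the injectivity asked for in the first step, hence the surjectivity of $H_\m^i(J)\to H_\m^i(JS')$. The ``in particular'' statement is the special case $N=\sqrt{0}$, $J=S$ (so $S'=S_{\red}$, $\Spec S'/JS'=\emptyset$, and $\sI_{\emptyset\subseteq\Spec S_{\red}}=\O_{\Spec S_{\red}}$), for which the Du Bois pair hypothesis is simply ``$S_{\red}$ is Du Bois''. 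The only real work beyond invoking \autoref{key injectivity} and standard local duality is the nilpotent bookkeeping in the second step---in particular verifying that one is dualizing the natural surjection $J\twoheadrightarrow JS'$ rather than some twist of it---and I expect that to be the one place to proceed with care.
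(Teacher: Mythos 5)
Your proof is correct and is essentially the paper's own argument: the authors also reduce the surjectivity to the Matlis/local dual of \autoref{key injectivity}, use the Du Bois pair hypothesis to identify $H^i_\m(JS')$ with $\mathbb{H}^i_\m(\DuBois{S',S'/JS'})$, and use the insensitivity of $\DuBois{X,Z}$ to nilpotents to identify that with $\mathbb{H}^i_\m(\DuBois{S,S/J})$ — you simply package these two identifications as $\DuBois{X,Z}\qis\widetilde{JS'}$ on the dual side before invoking the injectivity, whereas the paper chases the corresponding commutative square of local cohomology modules. The compatibility check you flag (that the dual of the natural surjection $J\twoheadrightarrow JS'$ is the map in \autoref{key injectivity}) is exactly the commutativity of that square, and your verification via the morphism of defining triangles is the right one.
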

\begin{proof} We consider the following commutative diagram:
\[
\xymatrix{
H_\m^i(J) \ar[r] \ar@{->>}[d] & H^i_\m(JS') \ar[d]^\cong\\
\mathbb{H}_\m^i(\underline{\Omega}^0_{S, S/J}) \ar[r]^-\cong & \mathbb{H}^i_\m(\underline{\Omega}^0_{S', S'/JS'})
}
\]
Here the left vertical map is surjective by the Matlis dual of \autoref{key injectivity} applied to $X=\Spec S$ and $Z=\Spec S/J$, the right vertical map is an isomorphism because $(S', S'/JS')$ is a Du Bois pair, and the bottom map is an isomorphism because $S_{\red}=S'_{\red}$. Chasing the diagram shows that $H_\m^i(J)\to H_\m^i(JS')$ is surjective. The last sentence is the case that $J$ is the unit ideal (i.e., the non-pair version).
\end{proof}

\begin{remark}
\label{rem.SmallDefOfLocalCohomologyAndFPure}
The characteristic $p>0$ analog of the above lemma (in the case $J=S$, i.e., the non-pair version) holds if $S_{\red}$ is $F$-pure. We give a short argument here. Without loss of generality we may assume $S$ and $S_{\red}$ are complete, so $S=A/I$ and $S_{\red}=A/\sqrt{I}$ where $A$ is a complete regular local ring. We may pick $e\gg0$ such that $(\sqrt{I})^{[p^e]}\subseteq I$. We have a composite map $$H_\m^i(A/(\sqrt{I})^{[p^e]})\to H_\m^i(A/I)\cong H_\m^i(S)\xrightarrow{\phi} H_\m^i(A/\sqrt{I})\cong H_\m^i(S_{\red}).$$ We know from \cite[Lemma 2.2]{LyubeznikVanishingoflocalcohomologycharP} that the image of the composite map is equal to $\text{span}_A\langle f^e(H_\m^i(S_{\red}))\rangle$, where $f^e$ denotes the natural $e$-th Frobenius action on $H_\m^i(S_{\red})$. In particular, the Frobenius action on $H_\m^i(S_{\red})/\im\phi$ is nilpotent. However, since $\im\phi$ is an $F$-stable submodule of $H_\m^i(S_{\red})$ and $S_{\red}$ is $F$-pure, \cite[Theorem 3.7]{MaFinitenesspropertyoflocalcohomologyforFpurerings} shows that Frobenius acts injectively on $H_\m^i(S_{\red})/\im\phi$. Hence we must have $H_\m^i(S_{\red})=\im\phi$, that is, $H_\m^i(S)\to H_\m^i(S_{\red})$ is surjective.
\end{remark}

We next give an example showing that the analog of \autoref{surjectivity in local cohomology} for $F$-injectivity fails in general. The example is a modification of \cite[Example 2.16]{EnescuHochsterTheFrobeniusStructureOfLocalCohomology}.

\begin{example}
\label{example:nonsurjectivityforF-inj}
Let $K=k(u,v)$ where $k$ is an algebraically closed field of characteristic $p>0$ and let $L=K[z]/(z^{2p}+uz^p+v)$ as in \cite[Example 2.16]{EnescuHochsterTheFrobeniusStructureOfLocalCohomology}. Now let $R$=$K+(x,y)L[[x,y]]$ with $\m=(x,y)L[[x,y]]$. Then it is easy to see that $(R,\m)$ is a local ring of dimension $2$ and we have a short exact sequence: $$0\to R\to L[[x,y]]\to L/K\to 0.$$ The long exact sequence of local cohomology gives $$H_\m^1(R)\cong L/K, \hspace{1em} H_\m^2(R)\cong H_\m^2(L[[x,y]]).$$ Moreover, one can check that the Frobenius action on $H_\m^1(R)$ is exactly the Frobenius action on $L/K$. The Frobenius action on $L/K$ is injective since $L^p\cap K=K^p$, and the Frobenius action on $H_\m^2(L[[x,y]])$ is injective because $L[[x,y]]$ is regular. Hence the Frobenius action on both $H_\m^1(R)$ and $H_\m^2(R)$ are injective. This proves that $R$ is $F$-injective.

Write $R=A/I$ for a regular local ring $(A,\m)$. One checks that the Frobenius action $F$: $H_\m^1(R)\to H_\m^1(R)$ is not surjective up to $K$-span (and hence not surjective up to $R$-span since the residue field of $R$ is $K$) because $L\neq L^p[K]$ by our choice of $K$ and $L$ (see \cite[Example 2.16]{EnescuHochsterTheFrobeniusStructureOfLocalCohomology} for a detailed computation on this). But now \cite[Lemma 2.2]{LyubeznikVanishingoflocalcohomologycharP} shows that $H_\m^1(A/I^{[p]})\to H_\m^1(A/I)$ is not surjective. Therefore we can take $S=A/I^{[p]}$ with $S_{\red}=R$ that is $F$-injective, but $H_\m^1(S)\to H_\m^1(S_{\red})$ is not surjective.
\end{example}

In view of \autoref{rem.SmallDefOfLocalCohomologyAndFPure} and \autoref{example:nonsurjectivityforF-inj} and the relation between $F$-injective and Du~Bois singularities, it is tempting to try to define a more restrictive variant of $F$-injective singularities for local rings.  In particular, if $(R, \fram)$ is a local ring such that $R_{\red}$ has these more restrictive $F$-injective singularities, then it should follow that $H^i_{\fram}(R) \to H^i_{\fram}(R_{\red})$ surjects for all $i$.


\begin{theorem}
\label{theorem.SheafversionSurj}
Suppose that $(X, Z)$ is a reduced pair and that $H \subseteq X$ is a Cartier divisor such that $H$ does not contain any irreducible components of either $X$ or $Z$.  If $(H, Z \cap H)$  is a Du Bois pair, then for all $i$ and all points $\eta \in H \subseteq X$, the following segment of the long exact sequence
\[
0 \to H^i_\eta(\sI_{Z, \eta} \cdot \O_{H, \eta}) \hookrightarrow H^{i+1}_\eta(\sI_{Z, \eta} \cdot \O_{X, \eta}(-H)) \twoheadrightarrow H^{i+1}_\eta(\sI_{Z, \eta}) \to 0
\]
is exact for all $i$.  Dually, in the special case that $\sI_Z = \O_X$, we can also phrase this as saying that
\[
0 \to \myH^{-i}( \omega_X^{\mydot}) \to \myH^{-i}( \omega_X^{\mydot}(H)) \to \myH^{-i+1}( \omega_H^{\mydot}) \to 0
\]
is exact for all $i$.
\end{theorem}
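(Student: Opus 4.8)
The plan is as follows. Everything in sight is local, so fix a point $\eta\in H$ and a local equation $f\in\O_{X,\eta}$ of $H$. Because $H$ contains no component of $X$, $f$ is a nonzerodivisor on $\O_X$, so $\sI_Z\otimes\O_X(-H)\to\sI_Z$ is injective; and because $H$ contains no component of $Z$, $f$ is a nonzerodivisor on $\O_Z=\O_X/\sI_Z$, so the cokernel of this inclusion is the genuine ideal sheaf $\sI_{Z,\eta}\cdot\O_{H,\eta}$ of the scheme-theoretic intersection $Z\cap H$ in $H$ (which is reduced, since $(H,Z\cap H)$ is Du~Bois and $H$ is reduced). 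Thus we have a short exact sequence $0\to\sI_Z(-H)\to\sI_Z\to\sI_{Z,\eta}\cdot\O_{H,\eta}\to 0$ of $\O_{X,\eta}$-modules whose long exact sequence for $H^{\bullet}_\eta$ is exactly the long exact sequence in the statement. Hence the displayed short exact sequences hold for all $i$ if and only if the natural map $H^i_\eta(\sI_Z)\to H^i_\eta(\sI_{Z,\eta}\cdot\O_{H,\eta})$ vanishes for every $i$; and since $\sI_Z(-H)\cong\sI_Z$ via multiplication by $f$, so that $H^{i+1}_\eta(\sI_Z(-H))\to H^{i+1}_\eta(\sI_Z)$ is multiplication by $f$, this is equivalent to: \emph{multiplication by $f$ is surjective on $H^i_\eta(\sI_Z)$ for every $i$}.

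To prove this surjectivity I would push the Du~Bois hypothesis through the key lemma \autoref{surjectivity in local cohomology} using a non-reduced base. For each $k\ge 1$ set $S_k=\O_{X,\eta}/(f^k)$, let $J_k$ be the image of $\sI_{Z,\eta}$ in $S_k$, and note that $(S_k)_{\red}=\O_{H,\eta}$, that $N_k=(f)/(f^k)$ is the nilradical of $S_k$, and that $\big((S_k)_{\red},\,(S_k)_{\red}/J_k(S_k)_{\red}\big)$ is the stalk at $\eta$ of the Du~Bois pair $(H,Z\cap H)$. Applying \autoref{surjectivity in local cohomology} to $S_k$, $J_k$, $S_k'=(S_k)_{\red}$ gives that $H^i_\eta(J_k)\to H^i_\eta(J_k(S_k)_{\red})$ is surjective for every $i$; since $f^k$ (resp.\ $f$) is a nonzerodivisor on $\O_Z$ we may identify $J_k\cong\sI_{Z,\eta}/f^k\sI_{Z,\eta}$ and $J_k(S_k)_{\red}\cong\sI_{Z,\eta}/f\sI_{Z,\eta}$, so this says $H^i_\eta(\sI_Z/f^k\sI_Z)\twoheadrightarrow H^i_\eta(\sI_Z/f\sI_Z)$ for all $i$ and all $k\ge1$. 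Feeding this into the long exact sequences of $0\to\sI_Z/f^{k-1}\sI_Z\xrightarrow{\;f\;}\sI_Z/f^k\sI_Z\to\sI_Z/f\sI_Z\to 0$ shows that each of these stays short exact after applying $H^i_\eta$, so that $H^i_\eta(\sI_Z/f^k\sI_Z)$ carries a length-$k$ filtration all of whose graded pieces are $\cong H^i_\eta(\sI_Z/f\sI_Z)$.

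The remaining, and genuinely hard, step is to deduce that $f$ acts surjectively on $H^i_\eta(\sI_Z)$ itself. Combining the exact sequences $0\to f^k\sI_Z\to\sI_Z\to\sI_Z/f^k\sI_Z\to 0$ (with $f^k\sI_Z\cong\sI_Z$) gives $0\to H^i_\eta(\sI_Z)/f^kH^i_\eta(\sI_Z)\to H^i_\eta(\sI_Z/f^k\sI_Z)\to\big(0:_{H^{i+1}_\eta(\sI_Z)}f^k\big)\to 0$, and one wants to let $k\to\infty$; the obstruction is that $H^i_\eta$ does not commute with the relevant inverse limit. The cleanest route is to dualize everything by Matlis duality from the start: then "$f$ surjective on $H^i_\eta(\sI_Z)$" becomes "$f$ is a nonzerodivisor on $\myH^{-i}\big(\myR\sheafhom_{\O_X}(\sI_Z,\omega_X^\mydot)\big)$", i.e.\ this coherent sheaf has no associated point contained in $H$; the modules $H^i_\eta(\sI_Z/f^k\sI_Z)^{\vee}$ become finitely generated, each an extension of $k$ copies of $H^i_\eta(\sI_Z/f\sI_Z)^{\vee}$, and the duals of the sequences above pin down the $f$-torsion, which a comparison of Hilbert functions as $k$ grows should force to vanish. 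This last limiting step is where I expect the real work to lie. Finally, the second (dual) formulation in the statement — the case $\sI_Z=\O_X$ phrased with $\omega_X^\mydot$, $\omega_X^\mydot(H)$ and $\omega_H^\mydot$ — is simply the Grothendieck--Serre dual of the first, via the adjunction identity $\myR\sheafhom_{\O_X}(\O_H,\omega_X^\mydot)\cong\omega_H^\mydot$ for the Cartier divisor $H$, so it follows formally once the first is proved.
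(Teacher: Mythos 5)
Your reduction of the statement to ``multiplication by $f$ is surjective on $H^i_\eta(\sI_{Z,\eta})$ for all $i$,'' and your use of \autoref{surjectivity in local cohomology} on $S_k=\O_{X,\eta}/(f^k)$ to obtain $H^i_\eta(I/f^kI)\twoheadrightarrow H^i_\eta(I/fI)$ and hence the injectivity of $\cdot f\colon H^i_\eta(I/f^{k-1}I)\to H^i_\eta(I/f^kI)$, is exactly the first two thirds of the paper's argument. (One small caution there: $H$ is not assumed reduced, so $(S_k)_{\red}$ need not equal $\O_{H,\eta}$ and $(f)/(f^k)$ need not be the whole nilradical; you should instead apply the lemma with $S'=\O_{H,\eta}=S_k/N_k$, $N_k=(f)/(f^k)$, which is permitted since the lemma only requires $N$ to be contained in the nilradical and $(S',S'/IS')$ to be the Du~Bois pair $(H,Z\cap H)$ localized at $\eta$.)

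The genuine gap is the final step, which you yourself flag: passing from these finite-level surjectivities to the surjectivity of $f$ on $H^i_\eta(I)$ itself. Your proposed route — Matlis dualize and compare Hilbert functions of the finitely generated modules $E^i_k=H^i_\eta(I/f^kI)^{\vee}$ (an iterated extension of $k$ copies of $E^i_1$) against the sequences $0\to D^{i+1}/f^kD^{i+1}\to E^i_k\to(0:_{D^i}f^k)\to 0$ with $D^i=H^i_\eta(I)^{\vee}$ — only compares top-dimensional multiplicities; at best it shows the $f$-torsion of $D^i$ has strictly smaller dimension, not that it vanishes, and since the relation couples the indices $i$ and $i+1$ it is not clear the count can be iterated down in dimension. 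The missing idea is that no limiting problem arises at all if one uses a \emph{direct} limit rather than an inverse limit: local cohomology commutes with filtered colimits, and the colimit of the system $I/fI\xrightarrow{\cdot f}I/f^2I\xrightarrow{\cdot f}\cdots$ is $H^1_f(I)\cong I_f/I$ (as $f$ is a nonzerodivisor on $I$), with $H^i_\eta(I_f/I)\cong H^{i+1}_\eta(I)$. Since every transition map $H^i_\eta(I/f^{k-1}I)\to H^i_\eta(I/f^kI)$ is injective (which you proved), the induced map $H^i_\eta(I/fI)\to\varinjlim_k H^i_\eta(I/f^kI)\cong H^{i+1}_\eta(I)$ is injective, and one checks — this is the Claim in the paper's proof — that it is precisely the connecting homomorphism of $0\to I\xrightarrow{\cdot f}I\to I/fI\to 0$; its injectivity is exactly the asserted exactness, with no multiplicity estimate needed. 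Your handling of the dual/global reformulation at the end is fine.
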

\begin{proof}
Localizing at $\eta$, we may assume that $X=\Spec R$, $Z=\Spec R/I$, $H=\Div(x)$ with $(R,\m)$ a local ring and $\eta=\{\m\}$. Moreover, the hypotheses imply that $x$ is a nonzerodivisor on both $R$ and $R/I$. It is enough to show that the segment of the long exact sequence
\begin{equation}
\label{equation LES local coho}
0\to H_\m^i(I/xI)\to H_\m^{i+1}(I)\xrightarrow{\cdot x} H_\m^{i+1}(I)\to 0
\end{equation}
induced by $0\to I\xrightarrow{\cdot x}I \to I/xI\to 0$ is exact.

Consider $S=R/x^nR$ and $J=I(R/x^nR)$.  The pair $(S' = R/xR, S'/IS')=(H, Z\cap H)$ is Du Bois by hypothesis (note we do not assume that $H$ is reduced).  Thus \autoref{surjectivity in local cohomology} implies that
\begin{equation}
\label{equation 1}
H_\m^i(I(R/x^nR))\twoheadrightarrow H_\m^i(I(R/xR))
\end{equation}
is surjective for every $i$, $n$. Since $x$ is a nonzerodivisor on $R/I$, $x^nI=I\cap x^nR$. Thus
\[
I(R/x^nR)=\frac{I+x^nR}{x^nR}\cong\frac{I}{I\cap x^nR}=I/x^nI.
\]
 Hence (\ref{equation 1}) shows that
$$H_\m^i(I/x^nI)\twoheadrightarrow H_\m^i(I/xI)$$ is surjective for every $i$, $n$. The long exact sequence of local cohomology induced by $0\to I/x^{n-1}I\xrightarrow{\cdot x} I/x^nI\to I/xI\to 0$ tells us that $$H_\m^i(I/x^{n-1}I)\xrightarrow{\cdot x} H_\m^i(I/x^nI)$$ is injective for every $i$, $n$. Hence after taking a direct limit we obtain that:
\[
\phi_i: H_\m^i(I/xI)\to \varinjlim_n H_\m^i(I/x^{n}I)\cong H_\m^i(\varinjlim_n (I/x^{n}I))\cong H_\m^i(H_x^1(I))\cong H_\m^{i+1}(I)
 \]
is injective for every $i$. Here the last two isomorphism come from the fact that $x$ is a nonzerodivisor on $I$ (since it is a nonzerodivisor on $R$) and a simple computation using the local cohomology spectral sequence.
\begin{claim}
The $\phi_i$'s are exactly the connection maps in the long exact sequence of local cohomology induced by $0\to I\xrightarrow{\cdot x} I\to I/xI\to 0$:
\[
\cdots\to H_\m^i(I) \to H_\m^i(I/x I)\xrightarrow{\phi_i} H_\m^{i+1}(I) \xrightarrow{\cdot x} H_\m^{i+1}(I)\to\cdots.
 \]
\end{claim}

This claim immediately produces (\ref{equation LES local coho}) because we proved that $\phi_i$ is injective for every $i$. Thus it remains to prove the Claim.
\begin{proof}[Proof of claim]
Observe that by definition, $\phi_i$ is the natural map in the long exact sequence of local cohomology
$$\cdots\to H_\m^i(I/xI)\xrightarrow{\phi_i} H_\m^i(I_x/I) \xrightarrow{\cdot x} H_\m^i(I_x/I)\to\cdots$$
which is induced by $0\to I/xI\to I_x/I \xrightarrow{\cdot x} I_x/I \to 0$ (note that $x$ is a nonzerodivisor on $I$ and $H_x^1(I)\cong I_x/I$). However, it is easy to see that the multiplication by $x$ map $H_\m^i(I_x/I) \xrightarrow{\cdot x} H_\m^i(I_x/I)$ can be identified with the multiplication by $x$ map $H_\m^{i+1}(I)\xrightarrow{\cdot x} H_\m^{i+1}(I)$ because we have a natural identification $H_\m^i(I_x/I)\cong H_\m^i(H_x^1(I))\cong H_\m^{i+1}(I)$. This finishes the proof of the claim and thus also the local cohomology statement.
\end{proof}

The global statement can be checked locally, where it is simply a special case of the local dual of the local cohomology statement.
\end{proof}

The following is the main theorem of the section, it answers a question of Kov\'{a}cs and the second author \cite[Question 5.7]{KovacsSchwedeInversionofadjunctionforDB} and is a generalization to pairs of a characteristic zero analog of \cite[Corollary A.5]{HoriuchiMillerShimomoto}.

\begin{theorem}
\label{inversion of adjunction for DB pair}
Suppose that $(X, Z)$ is a reduced pair and that $H\subseteq X$ is a Cartier divisor such that $H$ does not contain any irreducible components of either $X$ or $Z$. If $(H, Z\cap H)$ is a Du Bois pair and $\scr{I}_Z|_{X\backslash H}$ is \Sn{k}, then $\scr{I}_Z$ is \Sn{k}.
\end{theorem}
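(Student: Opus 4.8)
The plan is to run an induction on $k$ and reduce everything to a statement about local cohomology of $\sI_Z$ in small cohomological degrees, where \autoref{theorem.SheafversionSurj} can be brought to bear. Recall that $\sI_Z$ being \Sn{k} at a point $\eta$ of codimension $c$ means $H^i_\eta(\sI_{Z,\eta}) = 0$ for all $i < \min(k, c)$. So after localizing at a point $\eta \in X$, we may assume $(R,\m)$ is local, $Z = \Spec R/I$, $H = \Div(x)$ with $x$ a nonzerodivisor on both $R$ and $R/I$ (this is where the hypothesis that $H$ contains no component of $X$ or $Z$ is used), and we want to show $H^i_\m(I) = 0$ for all $i < \min(k, \dim R)$. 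If $\eta \notin H$ there is nothing to prove since $\sI_Z|_{X \setminus H}$ is already \Sn{k}, so we may assume $x \in \m$.

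First I would record the key input. From \autoref{theorem.SheafversionSurj}, applied at $\eta$, we get for every $i$ a short exact sequence
\[
0 \to H^i_\m(I/xI) \to H^{i+1}_\m(I) \xrightarrow{\cdot x} H^{i+1}_\m(I) \to 0,
\]
that is, multiplication by $x$ is surjective on $H^{i+1}_\m(I)$ for all $i$, and its kernel is $H^i_\m(I/xI)$. Now $I/xI \cong \sI_{Z\cap H}$ inside $\O_H = R/xR$ (using again that $x$ is a nonzerodivisor on $R/I$, so $I/xI \cong I(R/xR)$), and $(H, Z\cap H)$ is Du Bois; more to the point, the inductive hypothesis will tell us that $\sI_{Z\cap H}$ is \Sn{k-1} on $H$, once we check its restriction to $H \setminus H'$ is \Sn{k-1} for a suitable Cartier divisor $H'$ on $H$ — or, more simply, we can avoid re-invoking the theorem and argue directly as follows.

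Here is the cleanest route. Proceed by induction on $k$; the case $k \le 1$ holds since \Sn{1} means reducedness of the relevant sheaf, and $\sI_Z$ is a subsheaf of the reduced ring $\O_X$, hence has no embedded primes — so $\sI_Z$ is always \Sn{1}. For the inductive step, fix $\eta \in H$ of codimension $c$ in $X$ and suppose $i < \min(k, c)$. By \autoref{theorem.SheafversionSurj}, $\cdot x$ is surjective on $H^i_\m(I)$, and $H^{i-1}_\m(I/xI)$ is its kernel. Since $\sI_Z|_{X\setminus H}$ is \Sn{k}, a standard fact gives that $\sI_Z$ is \Sn{k} away from $H$, hence $(I/xI) = \sI_{Z\cap H}$ is \Sn{k-1} at all points of $H$ not lying on the "bad locus"; but in fact the hypotheses pass verbatim from $(X, Z, H)$ to $(H, Z\cap H, H')$ for $H'$ a general hyperplane section through $\eta$, so by induction on $k$ applied to the pair $(H, Z\cap H)$ we learn $\sI_{Z\cap H}$ is \Sn{k-1} on $H$, i.e. $H^{i-1}_\m(I/xI) = 0$ for $i - 1 < \min(k-1, c-1) = \min(k,c) - 1$, which is exactly the range $i < \min(k,c)$. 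Therefore the kernel of $\cdot x$ on $H^i_\m(I)$ vanishes, so $\cdot x$ is an isomorphism on $H^i_\m(I)$; since $x \in \m$ and $H^i_\m(I)$ is $x$-power torsion (being a local cohomology module supported at $\m$), this forces $H^i_\m(I) = 0$. That completes the induction and shows $\sI_Z$ is \Sn{k}.

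The main obstacle is the bookkeeping in the inductive step: one must be careful that cutting by a general hyperplane $H'$ through $\eta$ genuinely preserves all three standing hypotheses — that $H'$ contains no component of $H$ or of $Z\cap H$ (a Bertini-type statement over $\bC$), and that $(H', (Z\cap H)\cap H')$ is still Du Bois — so that the induction on $k$ can be applied to $(H, Z\cap H, H')$; alternatively one avoids hyperplane sections entirely and instead feeds \autoref{theorem.SheafversionSurj} back in for the pair $(H, Z\cap H)$ using that $\sI_{Z\cap H}|_{H\setminus(\text{anything})}$ is \Sn{k-1}, which requires knowing \Sn{} descends from $X$ to the Cartier divisor $H$ on the locus where things are nice, a routine but slightly delicate depth computation. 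Either way, the genuinely new content is entirely in the short exact sequence coming from \autoref{theorem.SheafversionSurj}; everything else is the inductive packaging.
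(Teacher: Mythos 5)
You correctly identified the one genuinely new input, namely the exact sequence $0 \to H^{i}_\m(I/xI) \to H^{i+1}_\m(I) \xrightarrow{\cdot x} H^{i+1}_\m(I) \to 0$ supplied by \autoref{theorem.SheafversionSurj}, but the inductive packaging has a real gap, and it sits exactly where you flagged the ``delicate'' step. The claim that $\sI_Z|_{X\setminus H}$ being \Sn{k} gives that $\sI_{Z\cap H}$ is \Sn{k-1} at points of $H$ off some bad locus is a non sequitur: $X\setminus H$ and $H$ are disjoint, so that hypothesis carries no information at any point of $H$; moreover, since $x$ is a nonzerodivisor on $I$, the depth of $I/xI$ at a point of $H$ differs from the depth of $I$ there by exactly one, so asserting \Sn{k-1} for $\sI_{Z\cap H}$ generically along $H$ is essentially asserting the conclusion you are trying to prove. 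The attempted rescue by applying the theorem inductively to $(H, Z\cap H, H')$ fails for the same reason: the needed hypothesis that $\sI_{Z\cap H}|_{H\setminus H'}$ is \Sn{k-1} is simply not available --- it does not follow from the Du Bois pair condition (Du Bois pairs need not satisfy any depth condition beyond reducedness/\Sn{1}) nor from anything else in the data --- and in addition $(H, Z\cap H)$ need not be a \emph{reduced} pair as the theorem requires, and Du Bois--ness of a hyperplane section constrained to pass ``through $\eta$'' is not a general-position statement. So the induction on $k$ is circular.

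The paper closes this gap with no induction and no hyperplane sections. Argue by contradiction: let $\eta$ be the generic point of an irreducible component $Y$ of the non-\Sn{k} locus of $\sI_Z$; since $\sI_Z|_{X\setminus H}$ is \Sn{k}, necessarily $Y\subseteq H$. After localizing at $\eta$, the module $I$ is \Sn{k} on the punctured spectrum, so $H^i_\m(I)$ has finite length for every $i<k$ and is therefore annihilated by $x^h$ for $h\gg 0$. At that point one does not need the kernel term $H^{i-1}_\m(I/xI)$ to vanish at all: the \emph{surjectivity} of $\cdot x$ on $H^i_\m(I)$ coming from \autoref{theorem.SheafversionSurj} already forces $H^i_\m(I)=x^hH^i_\m(I)=0$ for $i<k$, contradicting the choice of $Y$. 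In short, the missing idea is to localize at a generic point of the non-\Sn{k} locus and play surjectivity of multiplication by $x$ against finite length, rather than trying to establish injectivity of $\cdot x$ via an induction on $k$ whose hypotheses cannot be verified.
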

\begin{proof}
Suppose $\scr{I}_Z$ is not \Sn{k}, choose an irreducible component $Y$ of the non-\Sn{k} locus of $\scr{I}_Z$. Since $\scr{I}_Z|_{X\backslash H}$ is \Sn{k}, we know that $Y\subseteq H$. Let $\eta$ be the generic point of $Y$. \autoref{theorem.SheafversionSurj} tells us that $$H^{i}_\eta(\sI_Z \cdot \O_X(-H)) \twoheadrightarrow H^{i}_\eta(\sI_Z)$$ is surjective for every $i$. Note that if we localize at $\eta$ and use the same notation as in the proof of \autoref{theorem.SheafversionSurj}, this surjection is the multiplication by $x$ map $H_\m^i(I) \xrightarrow{\cdot x} H_\m^i(I)$.

However, after we localize at $\eta$, $I$ is \Sn{k} on the punctured spectrum, $H_\m^i(I)$ has finite length for every $i<k$. In particular, for $h\gg 0$, $x^h$ annihilates $H_\m^i(I)$ for every $i<k$. Therefore for $i<k$, the multiplication by $x$ map $H_\m^i(I) \xrightarrow{\cdot x} H_\m^i(I)$ cannot be surjective unless $H_\m^i(I)=0$, which means $I$ is \Sn{k} as an $R$-module. But this contradicts our choice of $Y$ (because we pick $Y$ an irreducible component of the non-\Sn{k} locus of $\scr{I}_Z$). Therefore $\scr{I}_Z$ is \Sn{k}.
\end{proof}

\begin{corollary}
\label{cor.CMNearCartier}
Let $X$ be a reduced scheme and let $H\subseteq X$ be a Cartier divisor. If $H$ is Du Bois and $X\backslash H$ is Cohen-Macaulay, then $X$ is Cohen-Macaulay and hence so is $H$.
\end{corollary}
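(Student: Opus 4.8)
The plan is to read this off from \autoref{inversion of adjunction for DB pair} by taking $Z$ to be the empty subscheme and letting $k$ range over all positive integers. First I would note that with $Z=\emptyset$ we have $\sI_Z=\O_X$, the ``reduced pair'' hypothesis is just the reducedness of $X$, and the pair $(H,Z\cap H)=(H,\emptyset)$ is Du Bois if and only if $H$ is Du Bois (since $\sI_{\emptyset\subseteq H}=\O_H$ and $\DuBois{H,\emptyset}=\DuBois{H}$). I also need that $H$ contains no irreducible component of $X$: since $X$ is reduced, a local equation for the Cartier divisor $H$ is a nonzerodivisor and so lies in no minimal prime, which gives this automatically. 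Finally, the hypothesis ``$\sI_Z|_{X\setminus H}$ is \Sn{k}'' becomes ``$\O_{X\setminus H}$ is \Sn{k},'' which holds for every $k$ precisely because $X\setminus H$ is Cohen-Macaulay.

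Granting these observations, \autoref{inversion of adjunction for DB pair} yields that $\O_X$ is \Sn{k} for every $k$. Since a Noetherian local ring is Cohen-Macaulay as soon as it is \Sn{k} with $k$ its own dimension, and since the non-\Sn{k} loci are closed, it follows that $\O_{X,\mathfrak{p}}$ is Cohen-Macaulay at every point $\mathfrak{p}$, i.e.\ that $X$ is Cohen-Macaulay.

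For the remaining assertion I would argue locally at a point $\mathfrak{p}$ of $H$: there $H$ is defined by a nonzerodivisor $x$ in the now Cohen-Macaulay local ring $A=\O_{X,\mathfrak{p}}$, so $x$ is part of a system of parameters, $\dim A/(x)=\dim A-1$, and $A/(x)=\O_{H,\mathfrak{p}}$ is again Cohen-Macaulay; hence $H$ is Cohen-Macaulay.

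There is no genuine obstacle here; essentially all the work is contained in \autoref{inversion of adjunction for DB pair}. The only points that need a word of justification are the reduction of a Cartier divisor on a reduced scheme to something locally cut out by a nonzerodivisor avoiding the minimal primes, and the standard equivalence between being \Sn{k} for all $k$ and being Cohen-Macaulay.
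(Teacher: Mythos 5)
Your proposal is correct and is exactly the intended derivation: the paper states \autoref{cor.CMNearCartier} without proof as an immediate consequence of \autoref{inversion of adjunction for DB pair}, obtained precisely by taking $Z=\emptyset$ (so $\sI_Z=\O_X$) and letting $k$ range over all integers. Your bookkeeping of the hypotheses (the Cartier divisor avoiding minimal primes since $X$ is reduced, and Cohen--Macaulay being equivalent to \Sn{k} for all $k$) and the final step that $H$, cut out by a nonzerodivisor in a Cohen--Macaulay scheme, is itself Cohen--Macaulay, are all as expected.
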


As mentioned in the introduction, these results should be viewed as a generalization of \cite[Corollary 1.3, Corollary 7.13]{KollarKovacsLCImpliesDB} and \cite[Theorem 5.5]{KovacsSchwedeInversionofadjunctionforDB}.  In those results, one considered a flat projective family $X \to B$ with Du~Bois fibers such that the general fiber is \Sn{k}.  They then show that the special fiber is $S_k$.  Let us explain this result in a simple case. Suppose that $B$ is smooth curve and $Z=0$, the special fiber is a Cartier divisor $H$ and $X\setminus H$ is Cohen-Macaulay.  Then we are trying to show that $H$ is Cohen-Macaulay which is of course the same as showing that $X$ is Cohen-Macaulay, which is exactly what our result proves.  The point is that we had to make no projectivity hypothesis for our family.

This result also implies \cite[Conjecture 7.9]{KovacsSchwedeInversionofadjunctionforDB} which we state next.  We leave out the definition of a rational pair here (this is a rational pair in the sense of Koll\'ar and Kov\'acs: a generalization of rational singularities analogous to dlt singularities, see \cite{KollarKovacsSingularitiesBook}), the interested reader can look it up, or simply assume that $D = 0$.

\begin{corollary}
\label{cor.RatOutsidePlusDBinImpliesRat}
Suppose $(X, D)$ is a pair with $D$ a reduced Weil divisor.  Further suppose that $H$ is a Cartier divisor on $X$ not having any components in common with $D$ such that $(H, D \cap H)$ is Du~Bois and such that $(X \setminus H, D \setminus H)$ is a rational pair in the sense of Koll\'ar and Kov\'acs.  Then $(X, D)$ is a rational pair in the sense of Koll\'ar and Kov\'acs.
\end{corollary}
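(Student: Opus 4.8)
The plan is to recognize a rational pair as a Du~Bois pair that in addition satisfies a Cohen--Macaulay condition --- this is the characterization of Koll\'ar and Kov\'acs, see \cite{KollarKovacsSingularitiesBook} --- and then to verify each of these two properties for $(X,D)$ separately: the Du~Bois condition by inversion of adjunction for Du~Bois pairs \cite{KovacsSchwedeInversionofadjunctionforDB}, and the Cohen--Macaulay condition by \autoref{inversion of adjunction for DB pair} applied to the Serre conditions \Sn{k}. Since $D$ is a reduced Weil divisor on $X$, the scheme $X$ is normal, in particular reduced; and since $H$ is an effective Cartier divisor having no component in common with $D$, the local equation of $H$ is a nonzerodivisor on both $\O_X$ and $\O_D$. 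Hence $(X,D)$ together with $H$ satisfies the hypotheses of \autoref{inversion of adjunction for DB pair} with $Z=D$. Because $\sI_D=\O_X(-D)$, it suffices to prove that $(X,D)$ is a Du~Bois pair and that $\sI_D$ is Cohen--Macaulay on all of $X$; the Koll\'ar--Kov\'acs characterization then gives the conclusion.

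First I would establish the Du~Bois claim locally. Over $X\setminus H$ the pair $(X\setminus H,D\setminus H)$ is rational, hence Du~Bois. Over a neighborhood of $H$, the hypothesis that $(H,D\cap H)$ is a Du~Bois pair, together with inversion of adjunction for Du~Bois pairs \cite{KovacsSchwedeInversionofadjunctionforDB}, shows that $(X,D)$ is a Du~Bois pair there. These two open sets cover $X$, so $(X,D)$ is a Du~Bois pair. Next, the Cohen--Macaulay claim: over $X\setminus H$ the sheaf $\sI_D$ is Cohen--Macaulay since $(X\setminus H,D\setminus H)$ is a rational pair, and I would then apply \autoref{inversion of adjunction for DB pair} with $Z=D$ and with the condition \Sn{k} for every $k$ (equivalently, for $k=\dim X$), using once more that $(H,D\cap H)$ is a Du~Bois pair, to conclude that $\sI_D$ is \Sn{k} for every $k$, that is, Cohen--Macaulay, on all of $X$. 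In the special case $D=0$ this recovers the assertion that $X$ has rational singularities when $X\setminus H$ does and $H$ is Du~Bois, the Cohen--Macaulay input then being precisely \autoref{cor.CMNearCartier}.

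The step I expect to require the most care is the invocation of the Koll\'ar--Kov\'acs characterization: one must quote its precise form --- that $(X,D)$ is a rational pair if and only if it is a Du~Bois pair and $\O_X(-D)$, equivalently $\omega_X(D)$, is Cohen--Macaulay --- and check that its standing assumptions, essentially the normality of $X$, hold here, which they do since $D$ is a Weil divisor. Everything else amounts to assembling the inversion-of-adjunction results of Kov\'acs and the second author with \autoref{inversion of adjunction for DB pair}; the single genuinely new ingredient beyond those earlier results is the Cohen--Macaulayness of $\sI_D$ across $H$, which \autoref{inversion of adjunction for DB pair} delivers directly.
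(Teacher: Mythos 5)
Your reduction to two local statements---Du~Bois-ness of $(X,D)$ via inversion of adjunction, and Cohen--Macaulayness of $\O_X(-D)$ via \autoref{inversion of adjunction for DB pair}---matches the inputs the paper uses, and the Cohen--Macaulay step is exactly the paper's one new observation. However, the final step of your argument rests on a characterization that is false: it is \emph{not} true that $(X,D)$ is a rational pair if and only if it is a Du~Bois pair and $\O_X(-D)$ is Cohen--Macaulay. Already for $D=0$ this fails: the affine cone over a K3 surface (with respect to any ample polarization) is Gorenstein, hence Cohen--Macaulay, and log canonical, hence Du~Bois, but it is not rational, since for the blow-up $\pi\: Y\to X$ of the vertex one has $\pi_*\omega_Y\subsetneq\omega_X$ (equivalently $R^2\pi_*\O_Y\neq 0$), obstructed by $H^2(S,\O_S)\neq 0$. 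The Koll\'ar--Kov\'acs criterion for rationality requires, beyond Du~Bois plus Cohen--Macaulayness of $\O_X(-D)$, that the natural map $\pi_*\omega_{Y}(D_Y)\to\omega_X(D)$ be an isomorphism for a thrifty resolution, and your argument never addresses this; as written it would ``prove'' that every Cohen--Macaulay Du~Bois pair is rational.

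This missing condition is precisely where the hypothesis that $(X\setminus H, D\setminus H)$ is a \emph{rational} pair must be used for more than merely supplying Du~Bois-ness and the \Sn{k} conditions away from $H$. The paper's proof follows the proof of \cite[Theorem 7.1]{KovacsSchwedeInversionofadjunctionforDB} word for word: that argument combines the rationality of the pair off $H$ with the Cohen--Macaulayness of $\O_X(-D)$ (there obtained from a projectivity hypothesis, here replaced by \autoref{inversion of adjunction for DB pair}) to extend the required isomorphism of pushed-forward canonical sheaves across $H$. To repair your write-up you would need either to quote the correct criterion and verify its $\omega$-condition using the rationality hypothesis off $H$, or, as the paper does, defer to the proof of \cite[Theorem 7.1]{KovacsSchwedeInversionofadjunctionforDB} with only the Cohen--Macaulay input swapped out.
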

\begin{proof}
We follow the \cite[Proof of Theorem 7.1]{KovacsSchwedeInversionofadjunctionforDB}.  It goes through word for word once one observes that $\O_X(-D)$ is Cohen-Macaulay which follows immediately from \autoref{inversion of adjunction for DB pair}.
\end{proof}

This corollary is also an analog of \cite[Proposition 2.13]{FedderWatanabe} and \cite[Corollary A.4]{HoriuchiMillerShimomoto}.

\section{Applications to local cohomology}

In this section we give many applications of \autoref{surjectivity in local cohomology} to local cohomology, several of these applications are quite easy to prove. However, they provide strong and surprising characteristic $0$ analogs and generalizations of results in \cite{HochsterRobertsFrobeniusLocalCohomology,SinghandWaltherArithmeticrankofSegreproducts,SinghandWaltherlocalcohomologyandpure,VarbaroCohomologicaldimension,DeStefaniNunezBetancourtFthresholdGradedrings} as well as answer their questions. Moreover, our results can give a generalization of the classical Kodaira vanishing theorem.

\subsection{Injectivity of Ext and a generalization of the Kodaira vanishing theorem} Our first application gives a solution to \cite[Question 6.2]{EisenbudMustataStillmanCohomologyontoricvarieties} on the injectivity of $\Ext$ in characteristic $0$, which parallels to its characteristic $p>0$ analog \cite[Theorem 1.3]{SinghandWaltherlocalcohomologyandpure}.
\begin{proposition}
\label{injectivity of Ext}
Let $(R,\m)$ be a Gorenstein local ring essentially of finite type over $\mathbb{C}$, and let $I\subseteq R$ be an ideal such that $R/I$ has Du~Bois singularities. Then the natural map $\Ext^j_R(R/I, R)\to H_I^j(R)$ is injective for every $j$.
\end{proposition}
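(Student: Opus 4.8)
The plan is to reduce the statement to an injectivity of $\Ext$'s between the quotients $R/I^t$ and then feed in the Key Lemma. Recall the standard presentation
\[
H^j_I(R) \;=\; \varinjlim_t \Ext^j_R(R/I^t, R),
\]
in which the transition maps are induced by the surjections $R/I^{t+1} \twoheadrightarrow R/I^t$, and in which the natural map of the proposition is exactly the structure map $\Ext^j_R(R/I, R) = \Ext^j_R(R/I^1, R) \to \varinjlim_t \Ext^j_R(R/I^t, R)$. Since in a filtered colimit an element of the first term dies if and only if it already dies at a finite stage, it suffices to prove: for every $j$ and every $t$, the map $\Ext^j_R(R/I, R) \to \Ext^j_R(R/I^t, R)$ induced by $R/I^t \twoheadrightarrow R/I$ is injective.

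To prove this I would pass through local duality. First, $R/I$ being Du~Bois is in particular reduced, so $I = \sqrt{I}$; hence $(R/I^t)_{\red} = R/\sqrt{I^t} = R/I$ is Du~Bois. Since $R/I^t$ is again a local ring essentially of finite type over $\C$, \autoref{surjectivity in local cohomology} (its last sentence, the non-pair case) applies and gives that
\[
H^i_\m(R/I^t) \longrightarrow H^i_\m(R/I)
\]
is surjective for every $i$ and every $t$. Now let $n = \dim R$. Matlis dualizing (the functor $(-)^\vee = \Hom_R(-,E_R(R/\m))$ is exact) turns this surjection into an injection $H^i_\m(R/I)^\vee \hookrightarrow H^i_\m(R/I^t)^\vee$, and Grothendieck local duality over the Gorenstein local ring $R$ identifies $H^i_\m(M)^\vee$ with $\widehat{\Ext^{n-i}_R(M,R)}$, functorially in the finitely generated module $M$. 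Thus we get $\widehat{\Ext^{n-i}_R(R/I,R)} \hookrightarrow \widehat{\Ext^{n-i}_R(R/I^t,R)}$, and since $R \to \widehat R$ is faithfully flat this descends to the injection $\Ext^{n-i}_R(R/I,R) \hookrightarrow \Ext^{n-i}_R(R/I^t,R)$. By functoriality of local duality this is the map induced by $R/I^t \twoheadrightarrow R/I$, and as $j := n-i$ runs over all integers we are done.

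Given the Key Lemma I do not expect a genuine obstacle; the only points needing care are bookkeeping ones: the Key Lemma must be invoked over $R$ itself rather than its completion (since $\widehat R$ is no longer essentially of finite type over $\C$), so the completion enters only afterward through local duality and the injectivity is then descended back along $R \to \widehat R$; and one must correctly match the ``natural map'' $\Ext^j_R(R/I, R) \to H^j_I(R)$ with the structure map into the filtered colimit and use that an element killed in the colimit is killed at a finite stage. This mirrors the characteristic $p$ argument of Singh--Walther, with \autoref{surjectivity in local cohomology} playing the role of Frobenius surjectivity.
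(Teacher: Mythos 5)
Your proof is correct and follows essentially the same route as the paper's: apply the Key Lemma (\autoref{surjectivity in local cohomology}) to $S = R/I^t$, use local duality to convert the surjection on local cohomology into the injection $\Ext^j_R(R/I,R)\hookrightarrow \Ext^j_R(R/I^t,R)$, and pass to the direct limit. The paper compresses this into two sentences; you have merely supplied the routine bookkeeping (reducedness of $R/I$, descent of injectivity along $R\to\widehat R$, and the filtered-colimit argument), all of which is accurate.
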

\begin{proof}
By \autoref{surjectivity in local cohomology} applied to $S=R/I^t$ and applying local duality, we know that $$\Ext^j_R(R/I, R)\to \Ext^j_R(R/I^t, R)$$ is injective for every $j$ and every $t>0$. Now taking a direct limit we obtain the desired injectivity \mbox{$\Ext^j_R(R/I, R)\to H_I^j(R)$}.
\end{proof}

\begin{remark}
\cite[Theorem 1.3]{SinghandWaltherlocalcohomologyandpure} proves the same injectivity result in characteristic $p>0$ when $R/I$ is $F$-pure (and when $R$ is regular). Perhaps it is worth to pointing out that \autoref{injectivity of Ext} fails in general if we replace Du Bois by $F$-injective: \autoref{example:nonsurjectivityforF-inj} is a counterexample. Because there we have $S=A/I$ is $F$-injective but $H_\m^1(A/I^{[p]})\to H_\m^1(A/I)$ is not surjective. Hence applying local duality shows that $$\Ext^{\dim A-1}_A(A/I, A)\to \Ext^{\dim A-1}_A(A/I^{[p]}, A)$$ is not injective, so neither is $\Ext^{\dim A-1}_A(A/I, A)\to H_I^{\dim A-1}(A)$.
\end{remark}

An immediate corollary of \autoref{injectivity of Ext} is the following, which is a characteristic $0$ analog of \cite[Theorem 7.3]{DeStefaniNunezBetancourtFthresholdGradedrings} (this also answers \cite[Question 7.5]{DeStefaniNunezBetancourtFthresholdGradedrings}, which is motivated by Stillman's conjecture).


\begin{corollary}
Let $R$ be a regular ring of essentially finite type over $\mathbb{C}$. Let $I\subseteq R$ be an ideal such that $S=R/I$ has Du Bois singularities. Then $\pd_RS\leq\nu(I)$ where $\nu(I)$ denotes the minimal number of generators of $I$.
\end{corollary}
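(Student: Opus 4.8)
The plan is to combine the injectivity of $\Ext$ just established with the standard relationship between projective dimension and the vanishing of $\Ext$ modules, together with the Auslander--Buchsbaum formula and a local cohomology computation. First I would reduce to the local case: projective dimension is computed locally, so it suffices to bound $\pd_{R_\m} (R/I)_\m$ for each maximal ideal $\m$ of $R$ in the support of $R/I$, and after localizing we are exactly in the setting of \autoref{injectivity of Ext} (a regular, hence Gorenstein, local ring $R$ with $R/I$ Du~Bois). Set $c = \nu(I)$. I want to show $\Ext^j_R(R/I, R) = 0$ for all $j > c$, since then $\pd_R (R/I) \leq c$ by the characterization of projective dimension via the vanishing of $\Ext$ against $R$ (for finitely generated modules over a Noetherian local ring, $\pd_R M = \sup\{ j : \Ext^j_R(M,R) \neq 0\}$).

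The key step is the following: if $I$ is generated by $c$ elements, then $H^j_I(R) = 0$ for all $j > c$. This is the classical bound on the cohomological (arithmetic) dimension by the number of generators --- $H^j_I(R)$ can be computed by the \v{C}ech complex on $c$ generators of $I$, which lives in cohomological degrees $0$ through $c$. Now \autoref{injectivity of Ext} gives an injection $\Ext^j_R(R/I, R) \hookrightarrow H^j_I(R)$ for every $j$; combining this with the vanishing $H^j_I(R) = 0$ for $j > c$ forces $\Ext^j_R(R/I, R) = 0$ for $j > c$. Hence $\pd_R (R/I) \leq c = \nu(I)$, as desired.

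There is essentially no serious obstacle here --- the work has already been done in \autoref{injectivity of Ext}. The only points requiring a word of care are the reduction to the local setting (projective dimension over a Noetherian ring equals the supremum of the local projective dimensions, and the Du~Bois hypothesis localizes appropriately, so the hypotheses of \autoref{injectivity of Ext} are inherited by $R_\m \to (R/I)_\m$), and the observation that $\nu(I)$ does not increase under localization, so a bound of $\nu(I)$ obtained locally suffices globally. Everything else is the standard dictionary between $\pd$, the vanishing of $\Ext(-,R)$, and the \v{C}ech bound on local cohomological dimension.
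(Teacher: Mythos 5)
Your proof is correct and follows essentially the same route as the paper: localize at each maximal ideal, use the \v{C}ech bound $H^j_{I}(R)=0$ for $j>\nu(I)$ together with \autoref{injectivity of Ext} to force $\Ext^j_R(R/I,R)=0$ for $j>\nu(I)$, and conclude via the vanishing-of-$\Ext$ characterization of projective dimension. The only caveat is that the identity $\pd_R M=\sup\{j \mid \Ext^j_R(M,R)\neq 0\}$ requires $\pd_R M<\infty$ (automatic here since $R$ is regular), a point the paper makes explicit.
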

\begin{proof}
For every $j>\nu(I)$ and every maximal ideal $\m$ of $R$, we have $H_{IR_\m}^j(R_\m)=0$. Therefore by \autoref{injectivity of Ext}, we have $\Ext^j_{R_\m}(S_\m, R_\m)=0$ for every $j>\nu(I)$. Since $\pd_{R_\m}S_\m<\infty$, we have $$\pd_{R_\m}S_\m=\sup\{j|\Ext^j_{R_\m}(S_\m, R_\m)\neq0\}\leq \nu(I)\footnote{This follows from applying the functor $\Hom_{R_{\m}}(-, R_{\m})$ to a minimal free resolution of $S_{\m}$ and observing that the matrix defining the maps have elements contained in $\m$.  See also \cite[2.4]{DwyerGreenleesIyengarFinitenessInDerivedCategories}}.$$ Because this is true for every maximal ideal $\m$, we have $\pd_RS\leq\nu(I)$.
\end{proof}

Next we want to prove a stronger form of \autoref{injectivity of Ext} in the graded case. We will first need the following criterion of Du Bois singularities for $\mathbb{N}$-graded rings. This result and related results should be well-known to experts (or at least well-believed by experts): for example \cite[Theorem 4.4]{MaF-injectivityandBuchsbaumsingularities}, \cite[Lemma 2.14]{BhattSchwedeTakagiweakordinaryconjectureandFsingularity}, \cite[Lemma 2.12]{KovacsSchwedeInversionofadjunctionforDB} or \cite{GK13}. However all these references only deal with the case that $R$ is the section ring of a projective variety with respect to a certain ample line bundle, while here we allow arbitrary $\mathbb{N}$-graded rings which are not even normal. We could not find a reference that handles the generality that we will need, and hence we write down a careful argument.

\begin{proposition}
\label{prop.CriterionforDB}
Let $(R,\m)$ be a reduced Noetherian $\mathbb{N}$-graded $(R_0=k)$-algebra with $\m$ the unique homogeneous maximal ideal. Suppose $R_P$ is Du Bois for all $P\neq\m$. Then we have $$h^i(\DuBois{R})\cong \big[H_\m^{i+1}(R)\big]_{>0}$$ for every $i>0$ and $$h^0(\DuBois{R})/R\cong \big[H^1_\m(R)\big]_{>0}.$$ In particular, $R$ is Du Bois if and only if $\big[H_\m^i(R)\big]_{>0}=0$ for every $i>0$.
\end{proposition}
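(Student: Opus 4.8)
The plan is to relate the Du~Bois complex $\DuBois{R}$ of the affine cone $\Spec R$ to the local cohomology of $R$ at $\m$, using the punctured-spectrum hypothesis and a standard dévissage via the projective quotient $X = \Proj R$.

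First I would reduce to understanding $\DuBois{R}$ on the punctured spectrum $U = \Spec R \setminus \{\m\}$. Since $R_P$ is Du~Bois for all $P \neq \m$, the natural map $\O_U \to \DuBois{U}$ is a quasi-isomorphism, so $h^i(\DuBois{R})$ is supported at $\m$ for $i > 0$, and $h^0(\DuBois{R})/R$ is supported at $\m$ as well (the map $\O \to \DuBois{}$ being an isomorphism away from $\m$). Thus each of these modules equals its own $\Gamma_\m$, i.e.\ $h^i(\DuBois{R}) = H^0_\m(h^i(\DuBois{R}))$ for $i>0$ and similarly for $h^0(\DuBois{R})/R$. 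The next step is to compute $H^j_\m$ of the complex $\DuBois{R}$: on one hand, using the hypercohomology spectral sequence $E_2^{p,q} = H^p_\m(h^q(\DuBois{R})) \Rightarrow \mathbb{H}^{p+q}_\m(\DuBois{R})$ together with the support statement just established (which forces $H^p_\m(h^q(\DuBois{R})) = 0$ for $p > 0$, $q > 0$, and pins down the $q=0$ row from the exact sequence $0 \to R \to h^0(\DuBois{R}) \to h^0(\DuBois{R})/R \to 0$), one reads off $\mathbb{H}^i_\m(\DuBois{R})$ in terms of the $h^i(\DuBois{R})$ and $H^{i}_\m(R)$. On the other hand, I would compute $\mathbb{H}^i_\m(\DuBois{R})$ directly and compare, and this is where the grading enters.

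The key computation is that $\mathbb{H}^i_\m(\DuBois{R}) \cong \big[H^i_\m(R)\big]_{\leq 0}$, or more precisely that the graded pieces split so that the positive-degree part of $H^i_\m(R)$ accounts for the difference between $H^i_\m(R)$ and $\mathbb{H}^i_\m(\DuBois{R})$. To see this I would pass to $X = \Proj R$ (using that $R$ is $\mathbb{N}$-graded and reduced, but not assuming it is generated in degree one, so one works with the sheaves $\widetilde{R(m)}$ and their relation to $\DuBois{X}$-twisted sheaves $\DuBois{X} \otimes \O_X(m)$ — here one uses that the Du~Bois complex of the cone restricts on $U$, hence via the $\mathbb{G}_m$-quotient, to the collection $\bigoplus_m \DuBois{X}(m)$, exactly as in the standard references cited just before the proposition but now without the normality/degree-one hypotheses). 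Then the local cohomology $H^{i+1}_\m(R)$ fits into the usual exact sequence involving $\bigoplus_m H^0(X, \O_X(m))$, $R$, and $\bigoplus_m H^i(X,\O_X(m))$, while $\mathbb{H}^{i+1}_\m(\DuBois{R})$ fits into the analogous sequence with $\O_X(m)$ replaced by $\DuBois{X}(m)$ and $R$ replaced by $h^0(\DuBois{R})$; comparing these and using that the surjection $H^0(X,\O_X(m)) \to \mathbb{H}^0(X, \DuBois{X}(m))$ is an isomorphism for $m \neq 0$ (the cone being Du~Bois away from the vertex) isolates the degree-$>0$ part.

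Assembling: combining the two expressions for $\mathbb{H}^i_\m(\DuBois{R})$ gives $h^i(\DuBois{R}) \cong \big[H^{i+1}_\m(R)\big]_{>0}$ for $i>0$ and $h^0(\DuBois{R})/R \cong \big[H^1_\m(R)\big]_{>0}$, and then the final equivalence is immediate: $R$ is Du~Bois iff $\O_R \to \DuBois{R}$ is a quasi-isomorphism iff $h^0(\DuBois{R})/R = 0$ and $h^i(\DuBois{R}) = 0$ for $i>0$, iff $\big[H^i_\m(R)\big]_{>0} = 0$ for all $i>0$. I expect the main obstacle to be the careful bookkeeping of the grading when $R$ is not standard graded (degrees $d_i > 0$ arbitrary) and not normal: one must justify the identification of $\DuBois{R}$ restricted to the punctured cone with the Veronese-style direct sum $\bigoplus_m \DuBois{X} \otimes \O_X(m)$ and the compatibility of the two exact sequences, which is exactly the generality the paper flags as missing from the literature. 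Everything else is a formal spectral-sequence and support argument.
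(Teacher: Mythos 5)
Your skeleton is sound: the support observation, the degeneration of $H^p_\m(h^q(\DuBois{R}))\Rightarrow \mathbb{H}^{p+q}_\m(\DuBois{R})$, and the final unwinding of the long exact sequence are all correct, and they would indeed reduce the proposition to the single claim that the natural map $H^i_\m(R)\to \mathbb{H}^i_\m(\DuBois{R})$ is the projection onto degrees $\leq 0$. (This is essentially the paper's own argument run in reverse: the paper proves the proposition first and then extracts $[H^i_\m(R)]_{\leq 0}\cong[\mathbb{H}^i_\m(\DuBois{R})]_{\leq 0}$ from it in the proof of the Ext-injectivity theorem.) The gap is that this one claim carries the entire content of the proposition, and the mechanism you propose for it does not work in the stated generality. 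You want to identify $\DuBois{R}$ on the punctured cone $U$ with $\bigoplus_m \DuBois{X}\otimes\O_X(m)$ for $X=\Proj R$ and then compare the two "section-ring" exact sequences. But when $R$ is not normal and not generated in degree one, $\O_X(m)$ need not be invertible, $U\to X$ need not be a $\mathbb{G}_m$-torsor, and $R$ need not be (close to) a section ring of anything on $X$ — the paper states explicitly that for general graded rings it does not know how to view them as section rings. So deferring to "the standard references, but without the normality/degree-one hypotheses" is precisely the step that has no proof in the literature; you have relocated the difficulty rather than resolved it.

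The paper's route avoids $\Proj R$ as a quotient of $U$ altogether. It forms the Rees algebra $R^{\natural}=\bigoplus_t R_{\geq t}$, so that $Y=\Proj R^{\natural}$ is the blowup of $\Spec R$ at $I=(R_{\geq n})$ (using $R_{\geq nt}=(R_{\geq n})^t$ for $n\gg 0$); it checks that $Y$ is Du~Bois (locally a nonnegative-degree part, hence a summand, of a localization of $R$ away from $\m$) and that the reduced exceptional divisor is $\Proj(R_0\oplus R_n\oplus R_{2n}\oplus\cdots)\cong\Proj R$, also Du~Bois. The descent triangle for the blowup square then expresses $\DuBois{R}$ as the cone of $\myR\pi_*\O_Y\oplus k\to\myR\pi_*\O_{E_{\red}}$, and an explicit \v{C}ech computation on the cover by the $D_+(x_i)$ with $x_i\in R_n$ identifies this map in cohomology with the truncation $[H^{i+1}_\m(R)]_{\geq 0}\to[H^{i+1}_\m(R)]_0$; a separate seminormalization argument pins down degree $0$ of $h^0(\DuBois{R})$. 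If you want to pursue your comparison strategy, you would need to supply a substitute for the Veronese identification with this level of care — which is, in effect, what the Rees-algebra construction is doing.
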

\begin{proof}
Let $R^\natural$ denote the Rees algebra of $R$ with respect to the natural filtration $R_{\geq t}$. That is, $R^\natural=R\oplus R_{\geq1}\oplus R_{\geq2}\oplus\cdots.$  Let $Y=\Proj R^\natural$. We first claim that $Y$ is Du Bois: $Y$ is covered by $D_+(f)$ for homogeneous elements $f\in R_{\geq t}$ and $t\in\mathbb{N}$.

If $\deg f>t$, then $\big[R^\natural_{f}\big]_0\cong R_f$ is Du Bois.  If $\deg f=t$, then $\big[R^\natural_{f}\big]_0\cong \big[R_f\big]_{\geq0}$ is also Du Bois\footnote{In general, if $S$ is a $\mathbb{Z}$-graded ring, then $S_{\geq0}\cong S[z]_0$ where $\deg z=-1$. Hence if $S$ is Du Bois, then $S_{\geq0}$, being a summand of $S[z]$, is also Du Bois \cite{KovacsDuBoisLC1}.} (see \cite[Lemma 5.4]{KuranoSatoSinghWatanabeMultigradedrings} for an analogous analysis on rational singularities).

Since $R$ and thus $R^\natural$ are Noetherian, there exists $n$ such that $R_{\geq nt}=(R_{\geq n})^t$ for every $t\geq 1$ \cite[Chapter III, Section 3, Proposition 3]{Bourbaki1998}. Let $I=(R_{\geq n})$, then we immediately see that \mbox{$Y\cong \Proj\big(R\oplus I\oplus I^2\oplus\dots\big)$} is the blow up of $\Spec R$ at $I$. We define the exceptional divisor to be $E\cong \Proj\big(R/I\oplus I/I^2\oplus I^2/I^3\oplus\cdots\big)$. We next claim that
\begin{equation}
\label{equation reduced exceptional}
(R/I\oplus I/I^2\oplus I^2/I^3\oplus\cdots)_{\red}\cong R_0\oplus R_n\oplus R_{2n}\oplus\cdots.
\end{equation}
The point is that, for $\overline{x}\in I^t/I^{t+1}=R_{\geq nt}/R_{\geq n(t+1)}$, if $x\in R_{nt+a}$ for some $a>0$, then we can pick $b>0$ such that $ba>n$, we then have $$x^b\in R_{bnt+ba}\subseteq R_{\geq n(bt+1)}.$$ But this means $\overline{x}^b=0$ in $I^{bt}/I^{bt+1}$ and thus $\overline{x}$ is nilpotent in $R/I\oplus I/I^2\oplus I^2/I^3\oplus\cdots$. This proves (\ref{equation reduced exceptional}).

By (\ref{equation reduced exceptional}) we have $E_{\red}\cong \Proj R_0\oplus R_n\oplus R_{2n}\oplus\cdots\cong\Proj R$ is Du Bois (because $\big[R_f\big]_0$ is Du Bois for every homogeneous $f\in R$). We consider the following commutative diagram:
\[\xymatrix{
E \ar[r]\ar[d]^\pi & Y\ar[d]^\pi\\
\Spec R/I \ar[r] & \Spec R
}\]
Since $Y$, $E_{\red}$ and $(\Spec R/I)_{\red}\cong \Spec k$ are all Du Bois by the above discussion, the exact triangle $\DuBois{R}\to\myR\pi_*\DuBois{Y}\oplus \DuBois{R/I}\to \myR\pi_*\DuBois{E}\xrightarrow{+1}$ reduces to
\begin{equation}
\label{equation 2}
\DuBois{R}\to\myR\pi_*\O_Y\oplus k\to\myR\pi_*\O_{E_{\red}}\xrightarrow{+1}.
\end{equation}

Next we study the map $\myR\pi_*\O_Y\to \myR\pi_*\O_{E_{\red}}$ using the \v{C}ech complex. We pick $x_1,\dots,x_m\in I=(R_{\geq n})$ in $R\oplus I\oplus I^2\oplus\cdots$ such that
\begin{enumerate}
\item The internal degree, $\deg x_i=n$ for each $x_i$ (in other words, $x_i \in R_n \subseteq R_{\geq n} = I$);
\item The images $\overline{x}_1,\dots,\overline{x}_m$ in $(R/I\oplus I/I^2\oplus I^2/I^3\oplus\cdots)_{\red}=R_0\oplus R_n\oplus R_{2n}\oplus\cdots$ are algebra generators of $R_0\oplus R_n\oplus R_{2n}\oplus\cdots$ over $R_0=k$.
\end{enumerate}
Note that conditions (a) and (b) together imply that the radical of $(x_1,\dots,x_m)$ in $R\oplus I\oplus I^2\oplus\cdots$ is the irrelevant ideal $I\oplus I^2\oplus\cdots$. In particular, $\{D_+(x_i)\}_{1\leq i\leq m}$ forms an affine open cover of $Y$. The point is that for any $y\in I^{t}=R_{\geq tn}$, $y^n\in I^{tn}=R_{\geq tn^2}$ as an element in $R\oplus I\oplus I^2\oplus\cdots$ is always contained in the ideal $(x_1,\dots,x_m)$, this is because the internal degree of $y^n$ is divisible by $n$, so it can be written as a sum of monomials in $x_i$ by (b).

The natural map $\O_{Y}\rightarrow \O_{E_{\red}}$ induces a map between the $s$-th spot of the \v{C}ech complexes of $\O_{Y}$ and $\O_{E_{\red}}$ with respect to the affine cover $\{D_+(x_i)\}_{1\leq i\leq m}$. The induced map on \v{C}ech complexes can be explicitly described as follows (all the direct sums in the following diagram are taken over all $s$-tuples $1\leq i_1<\cdots <i_s\leq m$):
\[  \xymatrix{
\bigoplus \O_{Y}(D_+(x_{i_1}x_{i_2}\cdots x_{i_s}))  \ar[r] \ar[d]^{\cong} & \bigoplus \O_{E_{\red}}(D_+(\overline{x}_{i_1}\overline{x}_{i_2}\cdots \overline{x}_{i_s}))\ar[d]^{\cong}  \\
\bigoplus\left\{\frac{y}{(x_{i_1}x_{i_2}\cdots x_{i_s})^h}\Bigg| h> 0, y\in I^{sh}=R_{\geq nsh}\right\} \ar[r] \ar[d]^{\cong} & \bigoplus\left\{\frac{\overline{y}}{(\overline{x}_{i_1}\overline{x}_{i_2}\cdots \overline{x}_{i_s})^h}\Bigg| h> 0, y\in R_{nsh}\right\} \ar[d]^\cong \\
\bigoplus\big[R_{x_{i_1}x_{i_2}\cdots x_{i_s}}\big]_{\geq 0} \ar[r]^\phi & \bigoplus\big[R_{x_{i_1}x_{i_2}\cdots x_{i_s}}\big]_0   
} \]

The induced map on the second line takes the element $\D\frac{y}{(x_{i_1}x_{i_2}\cdots x_{i_s})^n}$ to $\D\frac{\overline{y}}{(\overline{x}_{i_1}\overline{x}_{i_2}\cdots \overline{x}_{i_s})^n}$ where $\overline{y}$ denotes the image of $y$ in $R_{nsh}$. Hence the same map $\phi$ on the third line is exactly ``taking the degree $0$ part". Therefore we have
\[
\myR^i\pi_*\O_Y\cong H^i(Y, \O_Y)=\big[H_\m^{i+1}(R)\big]_{\geq0}
 \]
while
\[
\myR^i\pi_*\O_{E_{\red}}\cong H^i(E_{\red}, \O_{E_{\red}})\cong \big[H_\m^{i+1}(R)\big]_0
 \]
 for every $i\geq 1$, and the map $\myR^i\pi_*\O_Y\to \myR^i\pi_*\O_{E_{\red}}$ is taking degree $0$ part. Therefore taking cohomology of (\ref{equation 2}), we have
\begin{equation}
\label{equation h^i for i>=2}
h^i(\DuBois{R})\cong \big[H_\m^{i+1}(R)\big]_{>0} \text{ for every } i\geq 2.
\end{equation}
Moreover, for $i=0,1$, the cohomology of (\ref{equation 2}) gives
\begin{equation}
\label{equation 3}
0\to h^0(\DuBois{R})\to H^0(Y, \O_Y)\oplus k\xrightarrow{\phi} H^0(E_{\red}, \O_{E_{\red}})\to h^1(\DuBois{R})\rightarrow \big[H_\m^2(R)\big]_{>0}\to 0.
\end{equation}
A similar \v{C}ech complex computation as above shows that
\[
H^0(Y, \O_Y)=\ker\Big(\oplus\big[R_{x_i}\big]_{\geq0}\to\oplus\big[R_{x_ix_j}\big]_{\geq0}\Big)
\]
while $H^0(E_{\red}, \O_{E_{\red}})=\ker\Big(\oplus\big[R_{x_i}\big]_{0}\to\oplus\big[R_{x_ix_j}\big]_{0}\Big)$. Therefore $\phi$ is surjective, which implies
\begin{equation}
\label{equation h^1}
h^1(\DuBois{R})\cong \big[H_\m^{2}(R)\big]_{>0}.
\end{equation}
Taking degree $>0$ part of (\ref{equation 3}) we get an exact sequence
\[
0\to h^0(\DuBois{R})_{>0}\to \oplus\big[R_{x_i}\big]_{>0}\to\oplus\big[R_{x_ix_j}\big]_{>0}.
\]
This implies that $h^0(\DuBois{R})_{>0}\cong (\Gamma(\Spec R \setminus \fram, \O_{\Spec R}))_{>0}$ and so
\begin{equation}
\label{equation h^0 in positive degree}
h^0(\DuBois{R})_{>0}/R_{>0}\cong \big[H_\m^1(R)\big]_{>0}.
\end{equation}

Finally we notice that $h^0(\DuBois{R})\cong R^{\sn}$ is the seminormalization of $R$ \cite[5.2]{SaitoMixedHodge}. We know $R^{\sn}_0\subseteq R^{\sn}$ is reduced. But we can also view $R^{\sn}_0$ as the quotient $R^{\sn}/R^{\sn}_{>0}$, in particular the prime ideals of $R^{\sn}_0$ correspond to prime ideals of $R^{\sn}$ that contain $R^{\sn}_{>0}$, so they all contract to $\m$ in $R$. Since seminormlization induces a bijection on spectrum, $R^{\sn}_0$ has a unique prime ideal. Thus $R^{\sn}_0$, being a reduced Artinian local ring, must be a field. Since seminormalization also induces isomorphism on residue fields, $R^{\sn}_0=k$ and thus $h^0(\DuBois{R})_0\cong R^{\sn}_0=R_0$. Hence (\ref{equation h^0 in positive degree}) tells us that
\begin{equation}
\label{equation h^0}
h^0(\DuBois{R})/R\cong \big[H_\m^1(R)\big]_{>0}.
\end{equation}
Now (\ref{equation h^i for i>=2}), (\ref{equation h^1}) and (\ref{equation h^0}) together finish the proof.
\end{proof}

Now we prove our result on injectivity of $\Ext$. Later we will see that this theorem can be viewed as a generalization of the Kodaira vanishing theorem.

\begin{theorem}
\label{thm.injectivityofExtforpuncturedDB}
Let $(R,\m)$ be a reduced Noetherian $\mathbb{N}$-graded $(R_0=\mathbb{C})$-algebra with $\m$ the unique homogeneous maximal ideal. Suppose $R_P$ is Du Bois for all $P\neq\m$. Write $R=A/I$ where $A=\mathbb{C}[x_1,\dots,x_n]$ is a polynomial ring with $\deg x_i=d_i>0$ and $I$ is a homogeneous ideal. Then the natural degree-preserving map $\Ext_A^j(R, A)\to H_I^j(A)$ induces an injection \[
\big[\Ext_A^j(R, A)\big]_{\geq -d}\hookrightarrow \big[H_I^j(A)\big]_{\geq -d}
\]
for every $j$, where $d=\sum d_i$.
\end{theorem}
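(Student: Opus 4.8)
The plan is to combine graded local duality with the Du~Bois criterion \autoref{prop.CriterionforDB} and the unconditional surjectivity extracted from \autoref{key injectivity}. Recall that $H_I^j(A)=\varinjlim_t\Ext_A^j(A/I^t,A)$, and that $R=A/I=A/I^1$ sits at the bottom of this directed system, so the map in the statement is the canonical map into this colimit. Since filtered colimits are exact, it suffices to show that for each $t\geq 1$ the map $\big[\Ext_A^j(R,A)\big]_{\geq -d}\to\big[\Ext_A^j(A/I^t,A)\big]_{\geq -d}$ induced by $A/I^t\twoheadrightarrow R$ is injective (an element of the source that dies in the colimit already dies at a finite stage). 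Now graded local duality over $A$, using $\omega_A\cong A(-d)$, gives for every finitely generated graded $A$-module $M$ a natural isomorphism $\Ext_A^j(M,A)\cong H_\m^{n-j}(M)^\vee(d)$; hence $\big[\Ext_A^j(M,A)\big]_{\geq -d}$ is the graded Matlis dual of $\big[H_\m^{n-j}(M)\big]_{\leq 0}$, and the map just written is the Matlis dual of $\big[H_\m^{n-j}(A/I^t)\big]_{\leq 0}\to\big[H_\m^{n-j}(R)\big]_{\leq 0}$. Since $R$ is reduced we have $\sqrt{I^t}=I$, so $(A/I^t)_{\red}=R$, and the theorem is reduced to the claim: for every $i$ and every $t\geq 1$, the natural map $\big[H_\m^i(A/I^t)\big]_{\leq 0}\to\big[H_\m^i(R)\big]_{\leq 0}$ is surjective.

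To prove this claim, apply derived local cohomology $\mathbb{H}_\m^i$ to the commutative square expressing naturality of $\sO_X\to\DuBois{X}$ along $\Spec R\to\Spec A/I^t$, obtaining
\[
\xymatrix{
H_\m^i(A/I^t)\ar[r]^-{\alpha}\ar@{->>}[d]_{\beta} & H_\m^i(R)\ar[d]^{\gamma}\\
\mathbb{H}_\m^i(\DuBois{A/I^t})\ar@{=}[r] & \mathbb{H}_\m^i(\DuBois{R}),
}
\]
where all arrows are maps of graded modules because they come from the $\mathbb G_m$-equivariant transformation $\sO_{(-)}\to\DuBois{(-)}$. The bottom arrow is an identification since $\DuBois{(-)}$ depends only on the reduction. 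Crucially, $\beta$ is surjective with \emph{no} Du~Bois hypothesis at all: it is precisely the surjection furnished by the Matlis dual of \autoref{key injectivity} with $Z=\varnothing$ (equivalently, the left vertical map in the proof of \autoref{surjectivity in local cohomology} with $J$ the unit ideal). So it remains only to show that $\gamma$ is an isomorphism in internal degrees $\leq 0$; given this, a diagram chase shows $\alpha$ is surjective in degrees $\leq 0$, which is the claim.

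To control $\gamma$, let $C$ be the cone of $R\to\DuBois{R}$, so there is a triangle $R\to\DuBois{R}\to C\xrightarrow{+1}$. As $R$ is reduced, $R\to h^0(\DuBois{R})=R^{\sn}$ is injective, so $h^{-1}(C)=0$, $h^0(C)\cong h^0(\DuBois{R})/R$, and $h^i(C)\cong h^i(\DuBois{R})$ for $i\geq 1$; by \autoref{prop.CriterionforDB} — this is the point at which the hypothesis that $R_P$ is Du~Bois for all $P\neq\m$ is used — these modules are isomorphic to $\big[H_\m^1(R)\big]_{>0}$ and to $\big[H_\m^{i+1}(R)\big]_{>0}$, and in particular are $\m$-power-torsion and concentrated in strictly positive internal degrees. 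Thus the cohomology of $C$ is $\m$-power-torsion, so $\mathbb{H}_\m^i(C)\cong h^i(C)$ is concentrated in strictly positive degrees for every $i$. Applying derived local cohomology to the triangle, the resulting long exact sequence shows $\big[H_\m^i(R)\big]_{\leq 0}\xrightarrow{\ \sim\ }\big[\mathbb{H}_\m^i(\DuBois{R})\big]_{\leq 0}$, which is exactly $\gamma$ in non-positive degrees. Chasing the square then finishes the claim, and taking $j=n-i$ with $i$ ranging over $\mathbb Z$ (the cases $j\notin[0,n]$ being vacuous) proves the theorem. The essential point — where care is needed rather than any hard estimate — is this matching of degree ranges: the bound $\geq -d$ on the $\Ext$ side corresponds under graded local duality to ``degrees $\leq 0$'' of local cohomology, and \autoref{prop.CriterionforDB} confines the failure of $R$ to be Du~Bois to strictly positive degrees; with that dictionary in hand the rest is a diagram chase.
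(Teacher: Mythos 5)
Your proposal is correct and follows essentially the same route as the paper: reduce via graded local duality and the direct limit to the surjectivity of $\big[H_\m^i(A/I^t)\big]_{\leq 0}\to\big[H_\m^i(R)\big]_{\leq 0}$, obtain $H_\m^i(A/I^t)\twoheadrightarrow \mathbb{H}_\m^i(\DuBois{R})$ from the Matlis dual of \autoref{key injectivity}, and use \autoref{prop.CriterionforDB} to identify $\big[H_\m^i(R)\big]_{\leq 0}\cong\big[\mathbb{H}_\m^i(\DuBois{R})\big]_{\leq 0}$ (the paper's equation (\ref{equation isom in negative degree})). The only cosmetic difference is that you run the long exact sequence of the cone of $R\to\DuBois{R}$ where the paper uses the hypercohomology spectral sequence $H_\m^p(h^q(\DuBois{R}))\Rightarrow\mathbb{H}_\m^{p+q}(\DuBois{R})$; the inputs and conclusions are identical.
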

\begin{proof}
We have the hypercohomology spectral sequence
\[
H_\m^p(h^q(\DuBois{R}))\Rightarrow \mathbb{H}_\m^{p+q}(\DuBois{R}).
\]
Since $R$ is Du~Bois away from $V(\m)$, $h^q(\DuBois{R})$ has finite length when $q\geq 1$. Thus we know \mbox{$H_\m^p(h^q(\DuBois{R}))=0$} unless $p=0$ or $q=0$.  We also have that $H_\m^0(h^i(\DuBois{R}))\cong h^i(\DuBois{R})$ for $i \geq 1$. Hence the above spectral sequence carries the data of a long exact sequence:
\begin{equation}
\label{equation LES}
{\def\arraystretch{1.6}
\begin{array}{cccccc}
0& \to & H_\m^1(h^0(\DuBois{R})) \to & \mathbb{H}_\m^{1}(\DuBois{R}) \to & h^1(\DuBois{R}) \to\\
 & \to & H_\m^2(h^0(\DuBois{R})) \to & \mathbb{H}_\m^{2}(\DuBois{R}) \to & h^2(\DuBois{R}) \to\\
& &\cdots&\cdots&\cdots\\
& \to & H_\m^i(h^0(\DuBois{R})) \to & \mathbb{H}_\m^{i}(\DuBois{R}) \to & h^i(\DuBois{R}) \to & \cdots
\end{array}
}
\end{equation}
We also have short exact sequence $0\to R\to h^0(\DuBois{R}) \rightarrow \big[H_\m^1(R)\big]_{>0}\to 0$ by \autoref{prop.CriterionforDB}. Therefore the long exact sequence of local cohomology and the observation that $\big[H^1_{\fram}(R)\big]_{>0}$ has finite length tells us that
\[
H_\m^1(h^0(\DuBois{R}))\cong H_\m^1(R)/\big[H_\m^1(R)\big]_{>0}
\] and $$H_\m^i(h^0(\DuBois{R}))\cong H_\m^i(R) \text{ for every } i\geq 2.$$
Now taking the degree $\leq0$ part of (\ref{equation LES}) and again using \autoref{prop.CriterionforDB} yields:
\begin{equation}
\label{equation isom in negative degree}
\big[H_\m^i(R)\big]_{\leq0}\cong \big[\mathbb{H}_\m^i(\DuBois{R})\big]_{\leq0} \text{ for every } i.
\end{equation}
At this point, note that by the Matlis dual of \autoref{key injectivity} (see the proof of \autoref{surjectivity in local cohomology} applied to $S=A/J$ for $\sqrt{J}=I$, and thus $S_{\red}=A/I=R$), we always have a surjection $H_\m^i(A/J)\twoheadrightarrow \mathbb{H}_\m^i(\DuBois{R})$ for every $i$ and $\sqrt{J}=I$. Taking the degree $\leq 0$ part and applying (\ref{equation isom in negative degree}), we thus get:
\begin{equation}
\label{equation surjectivity in nonpositive degree}
\big[H_\m^i(A/J)\big]_{\leq0}\twoheadrightarrow \big[H_\m^i(R)\big]_{\leq0}
\end{equation}
is surjective for every $i$ and $\sqrt{J}=I$. Now taking $J=I^t$ and applying graded local duality (we refer to \cite{BrodmannSharpLocalCohomology} for definitions and standard properties of graded canonical modules, graded injective hulls and graded local duality, but we emphasize here that the graded canonical module of $A$ is $A(-d)$), we have:
\[
\big[\Ext_A^j(R, A(-d))\big]_{\geq 0}\hookrightarrow \big[\Ext_A^j(A/I^t, A(-d))\big]_{\geq 0}
\]
is injective for every $j$ and $t$. So after taking a direct limit and a degree shift, we have
\[
\big[\Ext_A^j(R, A)\big]_{\geq -d}\hookrightarrow \big[H_I^j(A)\big]_{\geq -d}
\]
is injective for every $j$. This finishes the proof.
\end{proof}

\begin{remark}
\label{remark.dualform}
The dual form of \autoref{thm.injectivityofExtforpuncturedDB} says that $\big[H_\m^i(A/J)\big]_{t}\twoheadrightarrow \big[H_\m^i(R)\big]_{t}$ is surjective for every $i$, every $t\leq 0$ and every $\sqrt{J}=I$, see (\ref{equation surjectivity in nonpositive degree}). When $R=A/I$ has isolated singularity, $A$ is standard graded and $t=0$, this was proved in \cite[Proposition 3.8]{VarbaroCohomologicaldimension}. Therefore our \autoref{thm.injectivityofExtforpuncturedDB} greatly generalized this result.
\end{remark}

In general, we cannot expect $\big[\Ext_A^j(R, A)\big]_{< -d}\hookrightarrow \big[H_I^j(A)\big]_{< -d}$ is injective under the hypothesis of \autoref{thm.injectivityofExtforpuncturedDB} (even if $R$ is an isolated singularity). Consider the following example.

\begin{example}[{\it cf.} Example 3.5 in \cite{SinghandWaltherlocalcohomologyandpure}]
Let $R=\mathbb{C}[s^4, s^3t, st^3, t^4]$. Then we can write $R=A/I$ where $A=\mathbb{C}[x,y.z,w]$ with standard grading (i.e., $x,y,z,w$ all have degree one). It is straightforward to check that $R$ is an isolated singularity with
\[
H_\m^1(R)=\big[H_\m^1(R)\big]_{>0}\neq 0
\]
 (in particular $\depth R=1$). By graded local duality, we have $\big[\Ext_A^3(R,A)\big]_{<-4}\neq 0$. On the other hand, using standard vanishing theorems in \cite[Theorem 2.9]{HunekeLyubeznikVanishingLocalcohomology} we know that $H_I^3(A)=0$. Therefore the map $\big[\Ext_A^3(R,A)\big]_{<-4}\rightarrow \big[H_I^3(A)\big]_{<-4}$ is not injective.
\end{example}

An important consequence of \autoref{thm.injectivityofExtforpuncturedDB} (in fact we only need the injectivity in degree $>-d$) is the following vanishing result:

\begin{theorem}
\label{thm.vanishing of local cohomolog for DB}
Let $(R,\m)$ be a Noetherian $\mathbb{N}$-graded $(R_0=\mathbb{C})$-algebra with $\m$ the unique homogeneous maximal ideal. Suppose $R_P$ is Du Bois for all $P\neq\m$ and $H_\m^i(R)$ has finite length for some $i$. Then $\big[H_\m^i(R)\big]_{<0}=0$.

In particular, if $R$ is equidimensional and is Cohen-Macaulay Du Bois on $\Spec R-\{\m\}$, then $\big[H_\m^i(R)\big]_{<0}=0$ for every $i<\dim R$.
\end{theorem}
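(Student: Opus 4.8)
The plan is to reduce the first assertion to \autoref{thm.injectivityofExtforpuncturedDB} via graded local duality, and then to deduce the second assertion by a finiteness argument. First I would dispose of the nonreduced case: since $R_P$ is Du~Bois, hence reduced, for every $P\neq\m$, the nilradical $N$ of $R$ is $\m$-primary and therefore has finite length, so $H_\m^j(R)\cong H_\m^j(R_{\red})$ for all $j\geq 1$; and if $i=0$ the conclusion $[H_\m^0(R)]_{<0}=0$ is automatic because $R$ is $\mathbb{N}$-graded. Since $R_{\red}$ inherits all the hypotheses (in particular $(R_{\red})_P=R_P$ is Du~Bois for $P\neq\m$, as $N R_P=0$), I may assume $R$ is reduced and $i\geq 1$. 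Writing $R=A/I$ with $A=\mathbb{C}[x_1,\dots,x_n]$, $\deg x_k=d_k>0$, $d=\sum d_k$, and $I$ a homogeneous ideal, graded local duality ($\omega_A=A(-d)$) shows that $H_\m^i(R)$ has finite length if and only if $\Ext_A^{n-i}(R,A)$ does, and that $[H_\m^i(R)]_{<0}=0$ if and only if $[\Ext_A^{n-i}(R,A)]_{>-d}=0$.

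Next I would invoke \autoref{thm.injectivityofExtforpuncturedDB} — only the injectivity in degrees strictly above $-d$ is needed — to obtain an injection $[\Ext_A^{n-i}(R,A)]_{>-d}\hookrightarrow [H_I^{n-i}(A)]_{>-d}$ induced by the natural degree-preserving map. Because $A$ is $\mathbb{N}$-graded, $[\Ext_A^{n-i}(R,A)]_{>-d}$ is an $A$-submodule of $\Ext_A^{n-i}(R,A)$, hence of finite length, so its image is a finite-length $A$-submodule of $H_I^{n-i}(A)$ concentrated in internal degrees $>-d$. Since this image also injects into $H_I^{n-i}(A)$, it therefore suffices to prove that, for $j=n-i<n$, every finite-length $A$-submodule of $H_I^{j}(A)$ is concentrated in internal degrees $\leq -d$. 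Passing to a cyclic submodule generated by an element of top degree (and using that a nonzero submodule of an $\mathbb{N}$-graded module generated in degree $e$ lives in degrees $\geq e$, so has socle in degrees $\geq e$), this reduces to the statement
\[
\big[\soc_A H_I^{j}(A)\big]_{>-d}=0\qquad\text{for every }j<n,
\]
which is equivalent to $\big[H_\m^0(H_I^{j}(A))\big]_{>-d}=0$ for every $j<n$.

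To prove this I would use the Grothendieck composition spectral sequence $E_2^{p,q}=H_\m^p(H_I^q(A))\Rightarrow H_\m^{p+q}(A)$ (valid since $V(\m)\subseteq V(I)$), together with $H_\m^{m}(A)=0$ for $m<n$ and $H_\m^{n}(A)$ being concentrated in internal degrees $\leq -d$. For $j<n$ the term $E_\infty^{0,j}$ is a subquotient of $H_\m^{j}(A)=0$, so every nonzero class in $E_2^{0,j}=\soc_A H_I^{j}(A)$ is eliminated either by surviving to $E_\infty$ (impossible) or by being hit or killed by a differential; since all differentials preserve internal degree and land in subquotients of terms $H_\m^{p}(H_I^{q}(A))$ with strictly smaller $q$ on the outgoing side and strictly smaller $p+q$ on the incoming side, the estimate ``concentrated in internal degrees $\leq -d$'' can be propagated through the grid of the spectral sequence by induction, the Koszul resolution of $A/\m$ controlling the degree shifts appearing in each $\Ext_A^{\bullet}(A/\m,-)$; the base cases $j=0,1$ are immediate, as $H_I^{0}(A)=0$ and $\soc_A H_I^{1}(A)=0$. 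I expect this degree bookkeeping inside the spectral sequence — in particular, organizing the induction so as to control the incoming differentials from terms $H_\m^{p}(H_I^{q}(A))$ with $p+q<n$ that themselves abut to zero — to be the main obstacle; the vanishing $H_\m^{p}(A_f)=0$ for every homogeneous $f$ of positive degree (so the higher columns of the Čech complex of $I$ contribute nothing to $\m$-local cohomology) should be used to trivialize much of the bookkeeping. Feeding the displayed vanishing back through the previous two paragraphs then yields $[H_\m^i(R)]_{<0}=0$, which is the first assertion.

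For the second assertion, if $R$ is equidimensional and Cohen--Macaulay on $\Spec R\setminus\{\m\}$ then, $R$ being catenary, $\depth R_P=\dim R_P=\dim R-\dim R/P$ for every $P\neq\m$, which is exactly the classical condition guaranteeing that $H_\m^i(R)$ has finite length for every $i<\dim R$; applying the first assertion to each such $i$ finishes the proof.
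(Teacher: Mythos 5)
Your overall reduction coincides with the paper's: graded local duality plus \autoref{thm.injectivityofExtforpuncturedDB} (only in degrees $>-d$), followed by the observation that the finite-length module $\Ext_A^{n-i}(R,A)$ lands inside $H_\m^0(H_I^{n-i}(A))$, so that everything comes down to the vanishing $\big[H_\m^0(H_I^{n-i}(A))\big]_{>-d}=0$, i.e.\ that the socle of $H_I^{n-i}(A)$ sits in internal degrees $\leq -d$. (Your preliminary reduction to the reduced case, and your justification of the finite-length hypothesis in the second assertion, are both fine.) The gap is in how you propose to prove that socle vanishing. The composed-functor spectral sequence $H_\m^p(H_I^q(A))\Rightarrow H_\m^{p+q}(A)$ constrains only the $E_\infty$ page, and it is perfectly consistent with the abutment being concentrated in degrees $\leq -d$ that the $E_2$ page carry cancelling pairs of classes in degrees $>-d$. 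Concretely, a class $\alpha\in E_2^{0,j}$ of internal degree $e>-d$ is allowed to die by having $d_r(\alpha)\neq 0$ in $E_r^{r,j-r+1}$; to rule this out you would need $\big[H_\m^r(H_I^{j-r+1}(A))\big]_{e}=0$ for $r\geq 2$, and the only constraint the spectral sequence places on those terms (of total degree $j+1\leq n$) is that their classes also die --- possibly by being hit by $d_r(\alpha)$ itself. The induction is circular. Already for $n=3$, $j=2$ and $I$ of height one, the sequence degenerates to an injection $H_\m^0(H_I^2(A))\hookrightarrow H_\m^2(H_I^1(A))$ whose cokernel is a subquotient of $H_\m^3(A)$; this says nothing about the internal degrees of $H_\m^0(H_I^2(A))$. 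The vanishing $H_\m^p(A_f)=0$ for $\deg f>0$ identifies the abutment but does not help with the individual $E_2$ terms.

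What is needed here is a genuine extra input, and this is exactly where the paper's proof goes: $H_\m^0(H_I^{n-i}(A))$ is an Eulerian graded $\scr{D}$-module supported at $\{\m\}$, and by the structure theorem of \cite{MaZhangEuleriangradedDmodules} (Theorem 1.2, adapted to the non-standard grading by replacing the Euler operator $\sum x_k\partial_k$ with $\sum d_kx_k\partial_k$) any such module has socle concentrated in internal degree exactly $-d$, whence $\big[H_\m^0(H_I^{n-i}(A))\big]_{>-d}=0$. Some such structure (Eulerian $\scr{D}$-module, or an equivalent) is indispensable for pinning down the grading: an abstract graded module supported at $\m$ with the same spectral-sequence behavior can be an arbitrary degree shift of $H_\m^n(A)$ and have its socle in any degree, so no amount of bookkeeping in the Grothendieck spectral sequence will produce the bound. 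If you replace your third paragraph with the appeal to \cite{MaZhangEuleriangradedDmodules}, the rest of your argument goes through.
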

\begin{proof}
Let $R=A/I$ where $A=\mathbb{C}[x_1,\dots,x_n]$ is a (not necessarily standard graded) polynomial ring and $I$ is a homogeneous ideal. Set $\deg x_i=d_i>0$ and $d=\sum {d_i}$. The graded canonical module of $A$ is $A(-d)$. By graded local duality,
\[
\Ext_A^{n-i}(R, A)(-d)\cong H_\m^i(R)^*.
\]
Therefore if $\big[H_\m^i(R)\big]_{-j}\neq 0$ for some $j>0$, then $\big[\Ext_A^{n-i}(R, A)\big]_{j-d}\neq 0$ for some $j>0$. By \autoref{thm.injectivityofExtforpuncturedDB}, we have an injection:
\[
\big[\Ext_A^{n-i}(R, A)\big]_{j-d}\hookrightarrow \big[H_I^{n-i}(A)\big]_{j-d}.
\]
Since $H_\m^i(R)$ has finite length, $\Ext_A^{n-i}(R, A)$ also has finite length. Hence the natural degree-preserving map $\Ext_A^{n-i}(R, A)\to H_I^{n-i}(A)$ factors through $$\Ext_A^{n-i}(R, A)\to H_\m^0H_I^{n-i}(A) \to H_I^{n-i}(A).$$ Taking the degree $j-d$ part, we thus get an injection:
\[
\big[\Ext_A^{n-i}(R, A)\big]_{j-d}\rightarrow \big[H_\m^0H_I^{n-i}(A)\big]_{j-d}.
 \]
 It follows that $\big[H_\m^0H_I^{n-i}(A)\big]_{j-d}\neq 0$ for some $j>0$. However, $H_\m^0H_I^{n-i}(A)$ is an Eulerian graded $\scr{D}$-module supported only at $\m$. Thus by \cite[Theorem 1.2]{MaZhangEuleriangradedDmodules},\footnote{\cite[Theorem 1.2]{MaZhangEuleriangradedDmodules} assumes $A=\mathbb{C}[x_1,\dots,x_n]$ has standard grading, i.e., $d_i=1$ and hence $d=\sum d_i=n$. However, the same proof can be adapted to the general case: one only needs to replace the Euler operator $\sum x_i\partial_i$ by $\sum d_ix_i\partial_i$. The reader is referred to \cite[Section 2]{PuthenpurakalDeRhamlocalcohomology} for a discussion on this.} the socle of $H_\m^0H_I^{n-i}(A)$ is concentrated in degree $-d$ so that $\big[H_\m^0H_I^{n-i}(A)\big]_{>-d}= 0$, a contradiction.
\end{proof}

If $R$ is the section ring of a normal Cohen-Macaulay and Du Bois projective variety $X$ with respect to an ample line bundle $\scr{L}$, then $\big[H_\m^i(R)\big]_{<0}=0$ is exactly the Kodaira vanishing for $X$ (which is well-known, for example see \cite{MaF-injectivityandBuchsbaumsingularities} or \cite{PatakfalviSemiNegativityHodgeBundles}). But \autoref{thm.vanishing of local cohomolog for DB} can handle more general $R$, i.e., $R$ need not be a section ring of an ample line bundle.  If $R$ is normal, then any graded ring is the section ring of some $\bQ$-divisor \cite{DemazureNormalGradedRings} and in that case, our results yield variants and consequences of Kawamata-Viehweg vanishing (also see \cite[Lemma 2.1 and Proposition 2.2]{WatanabeRemarksonDemazure}).  But for general graded rings we do not know how to view them as section rings.  Thus our results \autoref{thm.injectivityofExtforpuncturedDB} and \autoref{thm.vanishing of local cohomolog for DB} should be viewed as generalizations of the Kodaira vanishing theorem for Cohen-Macaulay Du Bois projective varieties.  It would also be natural to try to generalize the results of \autoref{prop.CriterionforDB} through \autoref{thm.vanishing of local cohomolog for DB} to the context of Du~Bois pairs.  One particular obstruction is the use of the Eulerian graded $\scr{D}$-module at the end of the proof of \autoref{thm.vanishing of local cohomolog for DB}.

\subsection{Set-theoretic Cohen-Macaulayness} Our next application is a characteristic $0$ analog of \cite[Lemma 3.1]{SinghandWaltherArithmeticrankofSegreproducts} on set-theoretic Cohen-Macaulayness.  Recall that an ideal $I$ in a regular ring $R$ is \emph{set theoretically Cohen-Macaulay} if there exists an ideal $J$ such that $\sqrt{I} = \sqrt{J}$ and $R/J$ is Cohen-Macaulay.
\begin{proposition}
\label{prop.IdealNotSetTheoreticallyCM}
Let $(R,\m)$ be a regular local ring essentially of finite type over $\mathbb{C}$, and let $I\subseteq R$ be an ideal. If $R/I$ is Du Bois but not Cohen-Macaulay, then the ideal $I$ is not set-theoretically Cohen-Macaulay.
\end{proposition}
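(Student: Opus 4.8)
The plan is to argue by contradiction, feeding the hypothetical set-theoretic Cohen--Macaulay ideal straight into the Key Lemma. Suppose $I$ were set-theoretically Cohen--Macaulay, so that there is an ideal $J\subseteq R$ with $\sqrt{J}=\sqrt{I}$ and $R/J$ Cohen--Macaulay. Since $R/I$ is Du~Bois it is in particular reduced, so $I=\sqrt{I}=\sqrt{J}$; hence $(R/J)_{\red}=R/\sqrt{J}=R/I$ is Du~Bois.

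Now I would apply \autoref{surjectivity in local cohomology} (the non-pair version, with $S=R/J$ and $S_{\red}=R/I$): it gives that $H_\m^i(R/J)\to H_\m^i(R/I)$ is surjective for every $i$. On the other hand $R/J$ is Cohen--Macaulay, and $\dim R/J=\dim R/\sqrt{J}=\dim R/I=:n$, so $H_\m^i(R/J)=0$ for all $i<n$. The surjection then forces $H_\m^i(R/I)=0$ for all $i<n$, i.e. $\depth R/I=n=\dim R/I$, so $R/I$ is Cohen--Macaulay, contradicting the hypothesis. Therefore no such $J$ exists and $I$ is not set-theoretically Cohen--Macaulay.

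There is essentially no obstacle in this argument once \autoref{surjectivity in local cohomology} is available: the entire content is that surjectivity of $H_\m^i(R/J)\to H_\m^i(R/I)$ transports the vanishing of local cohomology witnessing Cohen--Macaulayness of $R/J$ down to $R/I$. The only bookkeeping points are the two elementary facts that $R/I$ Du~Bois is reduced (so that $I$ is radical and equals $\sqrt{J}$, making $(R/J)_{\red}=R/I$) and that passing to the radical does not change dimension (so the vanishing range $i<\dim R/J$ coincides with the range $i<\dim R/I$ needed to conclude); both are standard.
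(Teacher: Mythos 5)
Your argument is correct and is essentially identical to the paper's own proof: both apply \autoref{surjectivity in local cohomology} with $S=R/J$ (noting $S_{\red}=R/I$ since $R/I$ is Du~Bois, hence reduced) to transport the vanishing $H_\m^i(R/J)=0$ for $i<\dim R/I$ onto $H_\m^i(R/I)$, contradicting non-Cohen--Macaulayness. The extra bookkeeping you supply (radicality of $I$ and equality of dimensions) is correct and merely makes explicit what the paper leaves implicit.
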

\begin{proof}
Suppose $I=\sqrt{J}$ for some $J$ such that $R/J$ is Cohen-Macaulay. Applying \autoref{surjectivity in local cohomology} to $S=R/J$, we find that for every $i<\dim R/I$, $$0=H_\m^i(R/J)\twoheadrightarrow H_\m^i(R/I)$$ is surjective and thus $R/I$ is Cohen-Macaulay, a contradiction.
\end{proof}


In the graded characteristic $0$ case, we have a stronger criterion for set-theoretic Cohen-Macaulayness.

\begin{corollary}
Let $R=\mathbb{C}[x_1,\dots,x_n]$ be a polynomial ring with possibly non-standard grading. Let $I$ be a homogeneous ideal of $R$ such that $R/I$ is Du Bois on $\Spec R-\{\m\}$ (e.g., $R/I$ has an isolated singularity at $\{\m\}$).

Suppose $\big[H_\m^i(R/I)\big]_{\leq0}\neq0$ for some $i<\dim R/I$ (e.g., $R/I$ is not Cohen-Macaulay on $\Spec R-\{\m\}$, or $H^i(X, \O_X)\neq 0$ for $X=\Proj R/I$). Then $I$ is not set-theoretically Cohen-Macaulay.
\end{corollary}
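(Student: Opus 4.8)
The plan is to argue by contradiction, in complete parallel with the proof of \autoref{prop.IdealNotSetTheoreticallyCM}, but using the graded surjectivity recorded in \autoref{remark.dualform} --- the dual form of \autoref{thm.injectivityofExtforpuncturedDB} --- in place of \autoref{surjectivity in local cohomology}. So suppose $I$ is set-theoretically Cohen--Macaulay. Since $I$ is homogeneous, I would take the witnessing ideal $J$ to be homogeneous as well, so that $\sqrt{J}=\sqrt{I}$ and $R/J$ is Cohen--Macaulay; the only genuinely delicate point, addressed at the end, is precisely this reduction to a homogeneous $J$. Note also that the hypothesis forces $\dim R/I\geq 1$, since otherwise there is no $i<\dim R/I$.

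The first step is to reduce to the case that $I$ is radical. Because Du~Bois singularities are reduced, $(R/I)_P$ is reduced, hence equals $(R/\sqrt{I})_P$, for every prime $P\neq\m$; in particular $R/\sqrt{I}$ is again Du~Bois on the punctured spectrum and $\dim R/\sqrt{I}=\dim R/I$. The nilradical $\sqrt{I}/I$ is then supported only at $\m$, hence of finite length, and it is concentrated in degrees $\geq 1$ because $(R/I)_0=\mathbb{C}$ is a field and $R/I$ vanishes in negative degrees. Feeding $0\to\sqrt{I}/I\to R/I\to R/\sqrt{I}\to 0$ into local cohomology gives $H_\m^i(R/I)\cong H_\m^i(R/\sqrt{I})$ for all $i\geq 1$ and, since $R/\sqrt{I}$ is reduced of positive dimension, $H_\m^0(R/I)\cong\sqrt{I}/I$, whence $\big[H_\m^0(R/I)\big]_{\leq 0}=0$. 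Thus the hypothesis is equivalent to $\big[H_\m^i(R/\sqrt{I})\big]_{\leq 0}\neq 0$ for some $i<\dim R/\sqrt{I}$, and I may replace $I$ by $\sqrt{I}$ and assume from now on that $R/I$ is reduced.

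Next I would apply \autoref{remark.dualform} to the reduced $\mathbb{N}$-graded ring $R/I$, which is Du~Bois away from $\m$, taking the polynomial ring $R$ in the role of $A$ and $J$ (with $\sqrt{J}=I$) in the role of the auxiliary ideal: this yields surjections
\[
\big[H_\m^i(R/J)\big]_t \twoheadrightarrow \big[H_\m^i(R/I)\big]_t \qquad \text{for every } i \text{ and every } t\leq 0.
\]
On the other hand $R/J$ is Cohen--Macaulay, and since it is a graded quotient of a polynomial ring with $(R/J)_0=\mathbb{C}$ a field, all of its minimal primes lie in $\m$, so $\dim (R/J)_\m=\dim R/J=\dim R/I$ and therefore $H_\m^i(R/J)=0$ for every $i<\dim R/I$. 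Combining the two observations, $\big[H_\m^i(R/I)\big]_{\leq 0}=0$ for every $i<\dim R/I$, contradicting the hypothesis; hence $I$ is not set-theoretically Cohen--Macaulay. The listed sufficient conditions are then immediate: if $R/I$ is not Cohen--Macaulay on the punctured spectrum, some $H_\m^i(R/I)$ with $i<\dim R/I$ fails to be of finite length and so is nonzero in infinitely many negative degrees; and if $H^i(X,\O_X)\neq 0$ for $X=\Proj R/I$ with $1\le i<\dim X$, then $\big[H_\m^{i+1}(R/I)\big]_0\cong H^i(X,\O_X)\neq 0$.

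I expect the main (indeed essentially the only) obstacle to be the passage from an arbitrary witnessing ideal $J$ to a homogeneous one: the whole proof above hinges on isolating the degree $\leq 0$ part of local cohomology, which requires $R/J$ to be graded. This is resolved by the customary convention that, for a homogeneous ideal, ``set-theoretically Cohen--Macaulay'' is tested by homogeneous ideals (as in \cite{SinghandWaltherArithmeticrankofSegreproducts}); alternatively one degenerates $J$ under the $\mathbb{G}_m$-action to a homogeneous ideal with the same radical, keeping track of Cohen--Macaulayness. Everything else is a routine diagram chase together with degree bookkeeping.
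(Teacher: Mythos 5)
Your proof is correct and follows essentially the same route as the paper's: assume a Cohen--Macaulay witness $R/J$, apply the dual form of \autoref{thm.injectivityofExtforpuncturedDB} (i.e.\ \autoref{remark.dualform}) to get $0=\big[H_\m^i(R/J)\big]_{\leq 0}\twoheadrightarrow\big[H_\m^i(R/I)\big]_{\leq 0}$ for $i<\dim R/I$, and contradict the hypothesis. The only difference is that you carefully address two points the paper's one-line proof elides --- replacing $I$ by $\sqrt{I}$ so that the (reduced) hypotheses of \autoref{thm.injectivityofExtforpuncturedDB} apply, and the homogeneity of the witnessing ideal $J$ --- and both of your treatments are sound.
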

\begin{proof}
Suppose $I=\sqrt{J}$ for some $J$ such that $R/J$ is Cohen-Macaulay. Applying the dual form of \autoref{thm.injectivityofExtforpuncturedDB} (see \autoref{remark.dualform}), we get $$0=\big[H_\m^i(R/J)\big]_{\leq 0}\twoheadrightarrow \big[H_\m^i(R/I)\big]_{\leq 0}$$ is surjective for every $i<\dim R/I$. This clearly contradict our hypothesis.
\end{proof}

We point out the following example as an application.

\begin{example}
\label{example:segreproductofElliptic}
Let $k$ be a field and let $E_k\subseteq\mathbb{P}^2_k$ be a smooth elliptic curve over $k$. We want to study the defining ideal of the Segre embedding $E_k\times\mathbb{P}^1_k\subseteq\mathbb{P}^5_k$. We let $k[x_0,\dots,x_5]/I=A/I$ be this homogeneous coordinate ring. It is well known that $A/I$ is not Cohen-Macaulay.
\begin{enumerate}
\item $k$ has characteristic $p>0$ and $E_k$ is an ordinary elliptic curve. In this case it is well known that $A/I$ is $F$-pure, so \cite[Lemma 3.1]{SinghandWaltherArithmeticrankofSegreproducts} shows $I$ is not set-theoretically Cohen-Macaulay.
\item $k$ has characteristic $p>0$ and $E_k$ is supersingular. We want to point out that, at least when $k$ is $F$-finite, $I$ is still not set-theoretically Cohen-Macaulay. This answers a question in \cite[Remark 3.4]{SinghandWaltherArithmeticrankofSegreproducts}. Suppose there exists $J$ such that $\sqrt{J}=I$ and $A/J$ is Cohen-Macaulay. Let $e\gg0$ such that $I^{[p^e]}\subseteq J$. The composite of the Frobenius map on $A/I$ with the natural surjection $A/I\xrightarrow{F}A/I^{[p^e]}\twoheadrightarrow A/J$ makes $A/J$ a small Cohen-Macaulay algebra over $A/I$ (note that $k$, and hence $A$, is $F$-finite). However, by a result of Bhatt \cite[Example 3.11]{BhattOntheNonexistenceofsmallCMalgebra}, $A/I$ does not have any small Cohen-Macaulay algebra, a contradiction.
\item $k$ has characteristic $0$. In this case, it is easy to check using \autoref{prop.CriterionforDB} that $A/I$ is Du Bois. Hence \autoref{prop.IdealNotSetTheoreticallyCM} immediately shows $I$ is not set-theoretically Cohen-Macaulay. This example was originally obtained in \cite[Theorem 3.3]{SinghandWaltherArithmeticrankofSegreproducts} using reduction to characteristic $p>0$. Thus our \autoref{prop.IdealNotSetTheoreticallyCM} can be viewed as a vast generalization of their result.
\end{enumerate}
\end{example}

It is worth to point out that in \autoref{example:segreproductofElliptic}, we know $H_\m^i(A/J)\twoheadrightarrow H_\m^i(A/I)$ for every $i$ and every $\sqrt{J}=I$ in characteristic $0$ since $A/I$ is Du Bois. In characteristic $p>0$, we have $H_\m^i(A/J)\twoheadrightarrow H_\m^i(A/I)$ for every $i$ and every $\sqrt{J}=I$ when $E_k$ is ordinary, however it is straightforward to check that $H_\m^2(A/I^{[p]})\to H_\m^2(A/I)$ is not surjective (it is the zero map) when $E_k$ is supersingular. Therefore the surjective property proved in \autoref{surjectivity in local cohomology} does not pass to reduction mod $p\gg0$.


\subsection{Koszul cohomology versus local cohomology} Our last application in this section is a strengthening of the main result of \cite{HochsterRobertsFrobeniusLocalCohomology}. We start by proving a general result which is characteristic-free. The proof is inspired by \cite{SchenzelApplicationsOfDualizingComplexes}.

\begin{theorem}
\label{char free theorem in analog of Hochster-Roberts}
Let $R$ be a Noetherian $\mathbb{N}$-graded $k$-algebra where $k = R_0$ is an arbitrary field. Let $\m$ be the unique homogenous maximal ideal. If $H_\m^r(R)=\big[H_\m^r(R)\big]_0$ for every $r<n=\dim R$, then $\big[H^r(\underline{x}, R)\big]_0\cong H_\m^r(R)$ for every $r<n$ and every homogeneous system of parameters $\underline{x}=x_1,\dots,x_n$, where $H^r(\underline{x}, R)$ denotes the $r$-th Koszul cohomology of $\underline{x}$. In other words, it is not necessary to take a direct limit when computing the local cohomology.
\end{theorem}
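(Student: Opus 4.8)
The plan is to fit both $H^{\bullet}(\underline x;R)$ and $H^{\bullet}_{\m}(R)$ into a single \v{C}ech--Koszul bicomplex and then let the grading hypothesis collapse its spectral sequence; this is the kind of spectral‑sequence manipulation underlying Schenzel's dualizing‑complex computations. Write $a_{i}=\deg x_{i}>0$, let $\mathbf K^{\bullet}(\underline x;R)$ be the Koszul cochain complex of $\underline x$ and $\check C^{\bullet}(\underline x)$ the stable Koszul (\v{C}ech) complex on $\underline x$, and consider the bicomplex $\check C^{\bullet}(\underline x)\otimes_{R}\mathbf K^{\bullet}(\underline x;R)$ as a complex of $\Z$‑graded modules with homogeneous differentials of degree $0$.

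The proof would rest on two facts. First, since $\underline x$ is a system of parameters $\sqrt{(\underline x)}=\m$, and since multiplication by each $x_{j}$ is null‑homotopic on $\mathbf K^{\bullet}(\underline x;R)$, every $H^{i}(\underline x;R)$ is annihilated by $(\underline x)$, hence is $(\underline x)$‑power‑torsion; therefore $\mathbf R\Gamma_{\m}\big(\mathbf K^{\bullet}(\underline x;R)\big)\simeq \mathbf K^{\bullet}(\underline x;R)$, i.e.\ the totalization of the above bicomplex has cohomology $H^{\bullet}(\underline x;R)$. Taking the hyperized spectral sequence that first computes cohomology in the \v{C}ech direction (legitimate since each $\mathbf K^{p}(\underline x;R)$ is free) gives a convergent spectral sequence
\[
E_{2}^{p,q}=H^{p}\big(\underline x;\,H^{q}_{\m}(R)\big)\ \Longrightarrow\ H^{p+q}(\underline x;R).
\]
Second, by hypothesis $H^{q}_{\m}(R)=[H^{q}_{\m}(R)]_{0}$ for $q<n$, and each $x_{i}\in\m=R_{+}$ kills $H^{q}_{\m}(R)$; so the Koszul complex $\mathbf K^{\bullet}(\underline x;H^{q}_{\m}(R))$ has zero differentials and $E_{2}^{p,q}=\bigoplus_{|J|=p}H^{q}_{\m}(R)(a_{J})$, with $a_{J}:=\sum_{j\in J}a_{j}$, for every $q<n$.

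Now I would pass to the degree‑$0$ strand of this spectral sequence (everything is graded with degree‑$0$ maps). For $q<n$ and $p\ge 1$ one gets $[E_{2}^{p,q}]_{0}=\bigoplus_{|J|=p}[H^{q}_{\m}(R)]_{a_{J}}=0$ since $a_{J}>0$; for $q<n$ and $p=0$ one gets $[E_{2}^{0,q}]_{0}=H^{q}_{\m}(R)$; and $E_{2}^{p,q}=0$ for $q>n$ by Grothendieck vanishing. So on the degree‑$0$ page the only possibly nonzero entries lie in the column $p=0$ and in the single row $q=n$, and inspecting the arrows $d_{r}\colon E_{r}^{p,q}\to E_{r}^{p+r,q-r+1}$ ($r\ge 2$) shows each has vanishing source or target: a target with $p+r\ge 1$ and second index $<n$ is $0$; any term with an index $>n$ is $0$; a differential into a surviving spot would require a negative first index (column $0$) or a first index $>n$ together with a second index $>n$ (row $n$). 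Hence the degree‑$0$ spectral sequence degenerates at $E_{2}$, and for $m<n$ the only $(p,q)$ with $p+q=m$ that contributes is $(0,m)$, with value $E_{\infty}^{0,m}=E_{2}^{0,m}=H^{m}_{\m}(R)$. Thus $[H^{m}(\underline x;R)]_{0}\cong H^{m}_{\m}(R)$ for every $m<n$; tracing the edge homomorphism identifies this with the canonical map $H^{m}(\underline x;R)\to\varinjlim_{t}H^{m}(\underline x^{t};R)=H^{m}_{\m}(R)$, which is exactly the "no direct limit needed'' phrasing.

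I do not expect a genuine obstacle: essentially all the content is squeezed out of the two facts above, neither of which uses reducedness or equidimensionality. The two points needing a little care are (i) organizing the bicomplex and choosing the filtration so that the convergent spectral sequence is the one abutting to Koszul cohomology through the torsion argument of the first fact (rather than the other one), and (ii) carrying the internal grading along so that "the degree‑$0$ part of a page'' is meaningful — which is automatic here because every map in sight is homogeneous of degree $0$.
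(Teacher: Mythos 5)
Your argument is correct, and it reaches the theorem by a genuinely different route than the paper. The paper's proof is dual in spirit: it applies $\myR\Hom_R(K^\mydot,-)$ to the triangle $\omega_R[n]\to\omega_R^\mydot\to\tau_{>-n}\omega_R^\mydot$, uses graded local duality (which is where the fact $\myR\Gamma_\m(K^\mydot)\simeq K^\mydot$ enters, via $\myR\Hom_R(K^\mydot,\omega_R^\mydot)^*$), and then invokes Schenzel's theorem that the hypothesis $H_\m^r(R)=[H_\m^r(R)]_0$ for $r<n$ forces $R$ to be Buchsbaum, so that $\tau_{>-n}\omega_R^\mydot$ splits as a complex of internal-degree-$0$ vector spaces with zero differentials; tracking internal degrees against the positive twists $R(d_{i_1}+\cdots+d_{i_p})$ in $K^\mydot$ then gives the isomorphism. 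You instead run the \v{C}ech--Koszul double complex directly: since the Koszul cohomologies are $\m$-power torsion, the total complex computes $H^\bullet(\underline{x};R)$, and the spectral sequence $E_2^{p,q}=H^p(\underline{x};H_\m^q(R))\Rightarrow H^{p+q}(\underline{x};R)$ collapses in internal degree $0$ because for $q<n$ the module $H_\m^q(R)$ is concentrated in degree $0$ (hence killed by every $x_i$, so the Koszul differentials vanish) and the twists $a_J>0$ kill every entry with $p\geq 1$ in degree $0$. What your approach buys is the elimination of dualizing complexes, local duality, and the Buchsbaum/Schenzel input entirely; the degeneration in internal degree $0$ is all that is needed, whereas the paper proves the stronger structural splitting of $\tau_{>-n}\omega_R^\mydot$. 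Two small remarks: your identification of the resulting isomorphism with the canonical map $H^m(\underline{x};R)\to\varinjlim_t H^m(\underline{x}^t;R)$ via the edge homomorphism is asserted rather than checked, but the paper's own proof likewise only produces an abstract isomorphism, so this is not a gap relative to the statement; and your parenthetical description of why no differential hits row $q=n$ is slightly garbled (the source of such a differential vanishes because its second index exceeds $n$), though this is immaterial since only total degrees $m<n$ are needed.
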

\begin{proof}
We fix $\underline{x}=x_1,\dots,x_n$ a homogeneous system of parameters. Let $\deg x_i=d_i>0$. Consider the graded Koszul complex:
$$K_\mydot: 0\to R(-d_1-d_2-\cdots-d_n)\to\cdots\to\oplus R(-d_i)\to R\to 0.$$
After we apply $\Hom_R(-, R)$ we obtain the graded Koszul cocomplex:
$$K^\mydot: 0\to R\to \oplus R(d_i)\to \cdots\to R(d_1+d_2+\cdots+d_n)\to 0.$$
We note that $K^\mydot$ lives in cohomology degree $0,1,\dots,n$.

Let $\omega_R^\mydot$ be the graded normalized dualizing complex of $R$, thus $\omega_R=h^{-n}\omega_R^\mydot$ is the graded canonical module of $R$. Let $(-)^*=\Hom_R(-, {}^*E)$ where ${}^*E$ is the graded injective hull of $k$. We have a triangle
$$\omega_R[n]\to\omega_R^\mydot\to\tau_{>-n}\omega_R^\mydot\xrightarrow{+1}$$
Applying $\myR\Hom_R(K^\mydot,-)$ we get:
$$\myR\Hom_R(K^\mydot, \omega_R[n])\to \myR\Hom_R(K^\mydot, \omega_R^\mydot)\to \myR\Hom_R(K^\mydot, \tau_{>-n}\omega_R^\mydot)\xrightarrow{+1}$$
Applying $\Hom_R(-,{}^*E)$ and using graded local duality, we obtain:
$$\myR\Hom_R(K^\mydot, \tau_{>-n}\omega_R^\mydot)^*\to K^\mydot \to \myR\Hom_R(K^\mydot, \omega_R[n])^*\xrightarrow{+1}$$
Note that $\myR\Hom_R(K^\mydot, \omega_R[n])^*$ lives in cohomological degree $n, n+1,\dots, 2n$,
hence we obtain a graded isomorphism in the derived category:
\[
\tau^{<n}K^\mydot\cong\tau^{<n}\myR\Hom_R(K^\mydot, \tau_{>-n}\omega_R^\mydot)^*.
\]
Therefore for every $r<n$, we have:
$$h^r(K^\mydot)\cong h^r(\myR\Hom_R(K^\mydot, \tau_{>-n}\omega_R^\mydot)^*).$$

At this point, notice that $H_\m^r(R)=\big[H_\m^r(R)\big]_0$ for every $r<n$ implies $R$ is Buchsbaum by \cite[Theorem 3.1]{SchenzelApplicationsOfDualizingComplexes}. This means $\tau_{>-n}\omega_R^\mydot$ is quasi-isomorphic to a complex of graded $k$-vector spaces. Moreover, we know that all these graded vector spaces have degree $0$ because $H_\m^i(R)=\big[H_\m^i(R)\big]_0$ for every $i<n$. In other words, we have:
$$\tau_{>-n}\omega_R^\mydot\cong 0\to k^{s_{n-1}}\to k^{s_{n-2}}\to \cdots\to k^{s_1}\to k^{s_0}\to 0$$
where the complex on the right hand side has zero differentials, $k^{s_i}$ has internal degree $0$ and sits in cohomology degree $-i$, with $s_i=\dim_k\big[H_\m^i(R)\big]_0$.

Recall that $d_i>0$, hence by keeping track of the internal degrees we see that
$$\big[\myR\Hom_R(K^\mydot, \tau_{>-n}\omega_R^\mydot)\big]_0\cong\tau_{>-n}\omega_R^\mydot.$$
Now by graded local duality, we have:
$$\big[H^r(\underline{x}, R)\big]_0=\big[h^r(K^\mydot)\big]_0\cong \big[h^r(\myR\Hom_R(K^\mydot, \tau_{>-n}\omega_R^\mydot)^*)\big]_0\cong h^r((\tau_{>n}\omega_R^\mydot)^*)\cong H_\m^r(R)$$ for every $r<n$. This finishes the proof.
\end{proof}

Now we can prove the following extension of the main result of \cite{HochsterRobertsFrobeniusLocalCohomology}.
\begin{corollary}
\label{analogue of Hochster-Roberts}
Let $R$ be a Noetherian $\mathbb{N}$-graded $(R_0 = k)$-algebra with $\m$ the unique homogeneous maximal ideal. Suppose $R$ is equidimensional and Cohen-Macaulay on $\Spec R-\{\m\}$. Assume one of the following:
\begin{enumerate}
\item $k$ has characteristic $p>0$ and $R$ is $F$-injective.
\item $k$ has characteristic $0$ and $R$ is Du Bois.
\end{enumerate}
Then $\big[H^r(\underline{x}, R)\big]_0\cong H_\m^r(R)$ for every $r<n=\dim R$ and every homogeneous system of parameters $\underline{x}=x_1,\dots,x_n$, where $H^r(\underline{x}, R)$ denotes the $r$-th Koszul cohomology of $\underline{x}$. In other words, it is not necessary to take a direct limit when computing the local cohomology.
\end{corollary}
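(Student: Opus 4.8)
The plan is to deduce the corollary directly from the characteristic-free \autoref{char free theorem in analog of Hochster-Roberts}, whose only hypothesis is that $H_\m^r(R)=\big[H_\m^r(R)\big]_0$ for every $r<n$. So the entire task reduces to checking this degree-concentration of the lower local cohomology modules in each of the two cases, after which the conclusion is immediate. As a preliminary remark, note that in both cases the standing assumptions that $R$ is equidimensional and Cohen--Macaulay on $\Spec R-\{\m\}$ guarantee that $H_\m^r(R)$ has finite length for every $r<n$; this is the same fact already used in the proof of \autoref{thm.vanishing of local cohomolog for DB} (via graded local duality, $\Ext_A^{n-r}(R,A)$ is finitely generated and supported at $\m$).

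For case (b), when $k$ has characteristic $0$ and $R$ is Du Bois, I would combine two earlier results. Since Du Bois singularities are reduced and the condition is local on $\Spec R$, the ring $R$ is reduced and $R_P$ is Du Bois for every prime $P\neq\m$; hence \autoref{prop.CriterionforDB} applies, and because $R$ itself is Du Bois it yields $\big[H_\m^i(R)\big]_{>0}=0$ for every $i>0$. On the other hand, $R$ is equidimensional and Cohen--Macaulay Du Bois on the punctured spectrum, so the ``in particular'' part of \autoref{thm.vanishing of local cohomolog for DB} gives $\big[H_\m^i(R)\big]_{<0}=0$ for every $i<n$. Putting these together (and using $H_\m^0(R)=0$ when $n\geq 1$, as $R$ is a reduced equidimensional ring of positive dimension), we get $H_\m^r(R)=\big[H_\m^r(R)\big]_0$ for every $r<n$, and \autoref{char free theorem in analog of Hochster-Roberts} finishes this case.

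For case (a), when $k$ has characteristic $p>0$ and $R$ is $F$-injective, I would use the interaction between the grading and the Frobenius action. The natural Frobenius $F$ on $H_\m^r(R)$ is $p^{}$-linear and carries the degree-$j$ graded piece into the degree-$pj$ graded piece. If $R$ is $F$-injective this action is injective, so if there were a nonzero $\xi\in\big[H_\m^r(R)\big]_j$ with $j\neq 0$, then the iterates $F^e(\xi)$ would all be nonzero and would lie in the pairwise distinct degrees $j,pj,p^2j,\dots$, forcing $H_\m^r(R)$ to have infinitely many nonzero graded components and contradicting the finite length established above. Hence $H_\m^r(R)=\big[H_\m^r(R)\big]_0$ for every $r<n$, and \autoref{char free theorem in analog of Hochster-Roberts} again applies.

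The point of this argument is that essentially all of the difficulty has already been absorbed into \autoref{char free theorem in analog of Hochster-Roberts}, \autoref{thm.vanishing of local cohomolog for DB}, and \autoref{prop.CriterionforDB}; the only genuinely new input is the elementary degree-versus-Frobenius observation in the $F$-injective case, so I do not expect any step to be a serious obstacle. The two points that deserve a line of care are: confirming that equidimensionality together with Cohen--Macaulayness on the punctured spectrum really does force finite length of the lower $H_\m^r(R)$ for an arbitrary $\mathbb{N}$-graded $k$-algebra, and confirming that the Du Bois property localizes so that \autoref{prop.CriterionforDB} is genuinely applicable in case (b).
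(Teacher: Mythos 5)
Your proposal is correct and follows essentially the same route as the paper: establish finite length of $H_\m^r(R)$ for $r<n$ from equidimensionality and Cohen--Macaulayness on the punctured spectrum, deduce $H_\m^r(R)=\big[H_\m^r(R)\big]_0$ via the Frobenius degree-shifting argument in case (a) and via \autoref{prop.CriterionforDB} together with \autoref{thm.vanishing of local cohomolog for DB} in case (b), and then invoke \autoref{char free theorem in analog of Hochster-Roberts}. The only difference is that you spell out the Frobenius-versus-grading observation that the paper dismisses as ``obvious,'' which is a fine level of detail to include.
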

\begin{proof}
Since $R$ is equidimensional and Cohen-Macaulay on the punctured spectrum, we know that $H_\m^r(R)$ has finite length for every $r<n=\dim R$. We will show  $H_\m^r(R)=\big[H_\m^r(R)\big]_0$ for every $r<n$ in situation $(a)$ or $(b)$. This will finish the proof by \autoref{char free theorem in analog of Hochster-Roberts}.

In situation $(a)$, $H_\m^r(R)=\big[H_\m^r(R)\big]_0$ is obvious because $H_\m^r(R)$ has finite length and Frobenius acts injectively on it. In situation $(b)$, notice that $\big[H_\m^r(R)\big]_{<0}=0$ by \autoref{thm.vanishing of local cohomolog for DB} while $\big[H_\m^r(R)\big]_{>0}=0$ by \autoref{prop.CriterionforDB}, hence $H_\m^r(R)=\big[H_\m^r(R)\big]_0$.
\end{proof}

\begin{remark}
In situation $(b)$ above, if $R$ is normal standard graded, then $H_\m^r(R)=\big[H_\m^r(R)\big]_0$ also follows from \cite[Theorem 4.5]{MaF-injectivityandBuchsbaumsingularities}. However, in the above proof we don't need any normal or standard graded hypothesis thanks to \autoref{prop.CriterionforDB} and \autoref{thm.vanishing of local cohomolog for DB}.
\end{remark}



\begin{remark}
\autoref{analogue of Hochster-Roberts} was proved when $R$ is $F$-pure and $k$ is perfect in \cite[Theorem 1.1]{HochsterRobertsFrobeniusLocalCohomology}, and by a technical reduction to $p>0$ technique, it was also proved when $R$ is of $F$-pure type \cite[Theorem 4.8]{HochsterRobertsFrobeniusLocalCohomology}. Since $F$-pure certainly implies $F$-injective and $F$-injective  type implies Du Bois (see \cite{SchwedeFInjectiveAreDuBois}), our theorem gives a generalization of Hochster-Roberts's result, and our proof is quite different from that of \cite{HochsterRobertsFrobeniusLocalCohomology}.
\end{remark}

We end this section by pointing out an example showing that in \autoref{char free theorem in analog of Hochster-Roberts} or \autoref{analogue of Hochster-Roberts}, it is possible that $H^r(\underline{x}, R)\neq H_\m^r(R)$, i.e., we must take the degree 0 piece of the Koszul cohomology. This is a variant of \autoref{example:segreproductofElliptic}.

\begin{example}
Let $R=\frac{k[x,y,z]}{x^3+y^3+z^3}\# k[a,b,c]$ be the Segre product
of $\frac{k[x,y,z]}{x^3+y^3+z^3}$ and $k[a,b,c]$ where the characteristic of $k$ is either 0 or congruent to $1$ mod $3$. Therefore $R$ is the homogeneous coordinate ring of the Segre embedding of $X=E\times \mathbb{P}^2$ to $\mathbb{P}^8$, where $E=\Proj \frac{k[x,y,z]}{x^3+y^3+z^3}$ is an elliptic curve. Notice that $\dim X=3$ and $\dim R=4$. Since $R$ has an isolated singularity it is Cohen-Macaulay on the punctured spectrum. It is easy to check that $R$ is Du Bois in characteristic $0$, and $F$-pure (and thus $F$-injective) in characteristic $p>0$ since $p\equiv 1$ mod $3$. So we know that $H_\m^i(R)=[H_\m^i(R)]_0$ for every $i<4$ where $\m$ is the unique homogeneous maximal ideal of $R$. Now we compute:
\[H_\m^2(R)=[H_\m^2(R)]_0=H^1(X, \O_X)=\oplus_{i+j=1}H^i(E, \O_E)\otimes_kH^j(\mathbb{P}^2, \O_{\mathbb{P}^2})\cong k\]
\[H_\m^3(R)=[H_\m^3(R)]_0=H^2(X, \O_X)=\oplus_{i+j=2}H^i(E, \O_E)\otimes_kH^j(\mathbb{P}^2, \O_{\mathbb{P}^2})=0 .\]
The first line shows that $R$ is not Cohen-Macaulay (actually we have $\depth R=2$), so for any homogeneous system of parameters $\underline{x}$ of $R$, we have $H^3(\underline{x}, R)\cong H_1(\underline{x}, R)\neq 0$. Hence $H^3(\underline{x}, R)\neq H_\m^3(R)$.
\end{example}

\section{Deformation of dense $F$-injective type}

In this section we use results in section 3 to prove that singularities of (dense) $F$-injective type in characteristic $0$ deform. In fact, we prove a slightly stronger result. Our motivation for studying this question is a recent result of Kov\'{a}cs-Schwede \cite{KovacsSchwedeDBDeforms} that Du Bois singularities in characteristic $0$ deform.

By the main result of \cite{SchwedeFInjectiveAreDuBois}, singularities of dense $F$-injective type are always Du Bois. It is conjectured \cite[Conjecture 4.1]{BhattSchwedeTakagiweakordinaryconjectureandFsingularity} that Du Bois singularities should be equivalent to singularities of dense $F$-injective type. This conjecture is equivalent to the weak ordinarity conjecture of Musta\c{t}\u{a}-Srinivas \cite{MustataSrinivasOrdinary}. If this conjecture is true, then singularities of dense $F$-injective type deform because Du Bois singularities deform \cite{KovacsSchwedeDBDeforms}. However, the weak ordinarity conjecture is wide open.

\begin{setup}[Reduction to characteristic $p > 0$]
\label{setup.ReductionToCharP}
We recall briefly reduction to characteristic $p > 0$.  For more details in our setting, see \cite[Section 2.1]{HochsterHunekeTightClosureInEqualCharactersticZero} and \cite[Section 6]{SchwedeFInjectiveAreDuBois}.

Suppose $(R, \m)$ is essentially of finite type over $\bC$ (or another field of characteristic zero, which we will also call $\bC$) so that $(R,\m)$ is a homomorphic image of $T_P$ where $T=\bC[x_1,\dots,x_t]$ and $P \subseteq T$ is a prime ideal so that $R = (T/J)_P$.  Given a finite collection of finitely generated $R$-modules $M_i$ (and finitely many maps between them), we may assume that each $M_i = (M_i')_P$ for a finitely generated $T$-module $M_i'$ annihilated by $J$.  Suppose $E \to \Spec T/J$ is the reduced preimage of $T/J$ in a log resolution of $(\Spec T, \Spec T/J)$.  We also keep track of $E \to \Spec T/J$ in the reduction to characteristic $p > 0$ process, as well as the modules $h^i(\DuBois{T/J}) = \myR^i \pi_* \O_E$.  Assume $x$ is the image of $h(x_1, \ldots, x_t) \in T$ (note that if $x$ has a problematic denominator, we can replace $x$ by another element that generates the same ideal and does not have a denominator).

We pick a finitely generated regular $\mathbb{Z}$-algebra $A\subseteq\mathbb{C}$ such that the coefficients of the generators of $P$, the coefficients of $h$, the coefficients of $J$, and the coefficients of a presentation of the $M_i'$ are contained in $A$.  Form $T_A=A[x_1,\dots,x_t]$ and let $P_A = P \cap T_A, J_A = J \cap T_A$ and observe that $P_A \tensor_A \bC = P, J_A \tensor_A \bC = J$ by construction.  Note that by generic flatness, we can replace $A$ by $A[b^{-1}]$ and so shrink $\Spec A$ if necessary to assume that $R_A$ is a flat $A$-module and $x$ is a nonzerodivisor on $R_A$. Likewise form $(M_i')_A$ with the same presentation matrix of $M_i'$ (and likewise with maps between the modules) and form $E_A \to \Spec T_A/J_A$ so that $(M_i')_A \otimes_A \bC = M_i'$ and that $(\myR^i \pi_* \O_{E_A}) \otimes_A \bC = (\myR^i \pi_* \O_{E_A})$.  Shrinking $\Spec A$ yet again if necessary, we can assume all these modules are flat over $A$ (and that any relevant kernels and cokernels of the finitely many maps between them are also flat).

We now mod out by a maximal ideal $\n$ of $A$ with $\kappa = A/\n$.  In particular, we use $R_A = (T_A/J_A)_{P_A}$, etc. and $R_\kappa$, $T_\kappa$, $E_\kappa$ etc. to denote the corresponding rings, schemes and modules over $A$ and $\kappa$, where $\kappa=A/\n$ for $\n$ a maximal ideal in $A$. Since all relevant kernels and cokernels of the maps are flat by shrinking $\Spec A$, $x$ is a nonzerodivisor on $R_\kappa$ for every $\kappa$.
\end{setup}

We also need a slightly different version of the main result of \cite{SchwedeFInjectiveAreDuBois}.  In that paper, it was assumed that if $R$ is finite type over a field of characteristic zero, and of dense $F$-injective type, then $R$ has Du~Bois singularities.  We need a version of this result in which $R$ is local.  The reason is, that, in the notation of \autoref{thm.FInjectiveTypeDeforms}, if $R$ is of finite type, and $R_p/x_p R_p$ is $F$-injective, then for $p$ sufficiently divisible, we obtain that $R_p$ is $F$-injective in a neighborhood of $V(x_p)$.  The problem is that we do not know to control these neighborhoods as $p$ varies.  Thus we need the following preliminary result.  In particular, we do not know whether having dense $F$-injective type is an open property in characteristic zero.

\begin{theorem}
\label{thm.DBdenseF-injectivetypeLocal}
Let $(R,\m)$ be a local ring essentially of finite type over $\mathbb{C}$ and suppose that $R$ has dense $F$-injective type.  Then $R$ is Du~Bois.
\end{theorem}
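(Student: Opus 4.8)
The plan is to reduce to the finite-type case of \cite{SchwedeFInjectiveAreDuBois} by spreading out, the one new point being to have the reduction-to-characteristic-$p$ machinery of \autoref{setup.ReductionToCharP} do the bookkeeping at the (possibly non-closed) point $P$. One first reduces to the case $R$ reduced: $F$-injective rings of characteristic $p>0$ are reduced, so a dense set of the reductions $R_\kappa$ is reduced, and since a nonzero nilpotent of $R$ would remain a nonzero nilpotent in $R_\kappa$ for all $\kappa$ in a dense open of $\Spec A$, $R$ itself is reduced. Write $R=(T/J)_P$ as in \autoref{setup.ReductionToCharP} and set $S=T/J$.

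Choose a log resolution $\pi\colon Y\to\Spec T$ of $(\Spec T,\Spec S)$ that is an isomorphism away from $\Spec S$, and let $E=\pi^{-1}(\Spec S)_{\red}$; then $\DuBois{S}\qis\myR\pi_*\O_E$ by \cite{SchwedeEasyCharacterization}, so $R$ is Du~Bois if and only if the cone $\tld C$ of $\O_{\Spec S}\to\myR\pi_*\O_E$ satisfies $P\notin\Supp_{\Spec S}h^i(\tld C)$ for every $i$. Carry $\pi$, $E$, and the finitely many coherent sheaves $h^i(\tld C)$ (together with their annihilator ideals) through the reduction process of \autoref{setup.ReductionToCharP}, shrinking $\Spec A$ so that these are flat over $A$ with formation commuting with $-\otimes_A\kappa$; after shrinking there is still a dense, hence non-empty, set of closed points $\n\in\Spec A$ with $R_\kappa=(S_\kappa)_{P_\kappa}$ being $F$-injective. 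Because $\Ann_{S_A}h^i(\tld C_A)$ cuts out $\Supp h^i(\tld C)$ over $S$ and $\Supp h^i(\tld C_\kappa)$ over $S_\kappa$, and $P_A=P\cap S_A$ maps onto $P_\kappa$, the condition $P\in\Supp h^i(\tld C)$ forces $P_\kappa\in\Supp h^i(\tld C_\kappa)$. Hence it is enough to produce a single $F$-injective $R_\kappa$ (with $\n$ surviving the shrinking) for which $P_\kappa\notin\Supp h^i(\tld C_\kappa)$ for all $i$.

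The crux is therefore the characteristic-$p$ statement: if $R_\kappa$ is $F$-injective then the localized map $R_\kappa\to(\myR\pi_*\O_{E_\kappa})_{P_\kappa}$ is a quasi-isomorphism. This is the local form of the main argument of \cite{SchwedeFInjectiveAreDuBois}. One always has, from the Matlis dual of \autoref{key injectivity} in the non-pair case (exactly as in the proof of \autoref{surjectivity in local cohomology}), a surjection $H^i_\m(R)\twoheadrightarrow\mathbb{H}^i_\m(\DuBois{R})$, and its reduction mod $p$ gives $H^i_\m(R_\kappa)\twoheadrightarrow\mathbb{H}^i_\m\big((\myR\pi_*\O_{E_\kappa})_{P_\kappa}\big)$. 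The Frobenius on $\myR\pi_*\O_{E_\kappa}$ is compatible with the one on $\O_{\Spec S_\kappa}$, and a Lyubeznik-type analysis of the images of iterated Frobenius on local cohomology, in the spirit of \autoref{rem.SmallDefOfLocalCohomologyAndFPure} and \cite[Lemma 2.2]{LyubeznikVanishingoflocalcohomologycharP}, shows that the kernel of this surjection (and the corresponding Frobenius-stable subquotients of the $h^i$ of the cone) carries a nilpotent Frobenius; $F$-injectivity of $R_\kappa$ makes these vanish, and Matlis duality over $\widehat{R_\kappa}$ upgrades this to acyclicity of the cone near $P_\kappa$. Taking any good $\kappa$ then closes the argument via the previous paragraph.

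The main obstacle is precisely this characteristic-$p$ step: one must verify that Schwede's originally global argument survives localization at the non-closed point $P_\kappa$ — that a high power of Frobenius still factors through the trace $\myR\pi_*\omega^{\mydot}_{E_\kappa}\to\omega^{\mydot}_{R_\kappa}$, that the Frobenius actions on $\mathbb{H}^i_\m(\myR\pi_*\O_{E_\kappa})$ and on the cohomology of the cone are the expected ones, and that only the $F$-injectivity of the single local ring $R_\kappa$ is used (recall $F$-injectivity is not known to localize). Everything else — the reduction to the reduced case, the resolution dictionary, and the spreading-out descent via supports and annihilators — is routine once \autoref{setup.ReductionToCharP} is available.
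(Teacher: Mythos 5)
Your overall strategy --- spread out, reduce mod $p$, and play the nilpotence of Frobenius on the higher cohomology of $\myR\pi_*\O_E$ against the injectivity of Frobenius supplied by $F$-injectivity --- is the same as the paper's, but there is a genuine gap at exactly the step you yourself flag as ``the main obstacle,'' and the paper closes it with a device your proposal omits. The paper first replaces $\fram$ by a minimal prime $Q_{\bC}$ of the non--Du~Bois locus of $R$ (checking that dense $F$-injective type passes to $R_{Q_{\bC}}$), so that after this localization $\myH^i(\DuBois{R})=\myH^i(\myR\pi_*\O_E)_{Q_{\bC}}$ has \emph{finite length} for $i>0$. This finite-length property is what, after reduction mod $p$, produces the Frobenius-compatible \emph{injection} $\myH^i(\myR(\pi_\kappa)_*\O_{E_\kappa})_{Q_\kappa}\hookrightarrow H^{i+1}_{Q_\kappa}(R_\kappa)$ of \cite[Proof 6.1]{SchwedeFInjectiveAreDuBois} (the local-cohomology spectral sequence for the cone degenerates, identifying its hypercohomology with its $\myH^i$); the source carries a nilpotent Frobenius by \cite[Theorem 7.1]{SchwedeFInjectiveAreDuBois} and the target an injective one, so the source vanishes. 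Working at the original prime $P$ as you do, no such injection is available, and the reasoning you substitute for it does not hold up: $F$-injectivity of $R_\kappa$ controls submodules of $H^i_{\m}(R_\kappa)$ only, so the claim that it ``makes vanish'' the ``Frobenius-stable subquotients of the $h^i$ of the cone'' is unsupported --- those subquotients are not a priori embedded in any $H^i_{\m}(R_\kappa)$ unless the finite-length degeneration has been arranged first.

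Two further remarks. First, you reduce only to $R$ reduced, but Du~Bois requires $\O_X\to \myH^0(\DuBois{X})$ to be an isomorphism, i.e.\ seminormality, which the paper records separately at the outset; reducedness alone does not dispose of the $\myH^0$ term. Second, your spreading-out bookkeeping (that $P\in\Supp \myH^i$ in characteristic $0$ forces $P_\kappa\in\Supp \myH^i$ after shrinking $\Spec A$) is fine. If you insist on arguing at $P$ rather than at $Q_{\bC}$, you would need to show that the kernel of $H^i_{\m}(R_\kappa)\to\mathbb{H}^i_{\m}\big((\myR(\pi_\kappa)_*\O_{E_\kappa})_{P_\kappa}\big)$ is the image of $\mathbb{H}^{i-1}_{\m}$ of the cone, that Frobenius is nilpotent on that \emph{hypercohomology} (not merely on the individual $\myH^j$ of $\myR\pi_*\O_{E_\kappa}$), deduce that all $\mathbb{H}^i_{\m}$ of the cone vanish, and then use local duality and biduality to conclude the cone is acyclic at $P_\kappa$. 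None of this is carried out; as written, the proposal defers precisely the content of the theorem to an unverified ``local form'' of \cite{SchwedeFInjectiveAreDuBois}.
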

The strategy is the same as in \cite{SchwedeFInjectiveAreDuBois}, indeed, the proof only differs in how carefully we keep track of a minimal prime of the non-Du~Bois locus.
\begin{proof}[Sketch of the proof]
It is easy to see that $R$ is seminormal so we need to show that $\myH^i({\DuBois{R}}) = 0$ for all $i > 0$.   We use the notation of \autoref{setup.ReductionToCharP}. Let $Q_{\bC} \subseteq P \subseteq T/J$ correspond to a minimal prime of the non-Du~Bois locus of $(R, \m)$, so that $\myH^i(\DuBois{(T/J)_{Q_{\bC}}}) = \myH^i(\myR \pi_* \O_{E})_{Q_{\bC}}$ has finite length for $i > 0$.  Since $(R, \m)$ has dense $F$-injective type, it is easy to see that so does $(R_{Q_{\bC}}, Q_{\bC} R_{Q_{\bC}})$ so we may assume that $\m = Q_{\bC}$.  Now using \autoref{setup.ReductionToCharP}, reduce to some model so that $(R_{\kappa}, \m_{\kappa})$ is $F$-injective.  The proof now follows exactly that of \cite[Proof 6.1]{SchwedeFInjectiveAreDuBois} where we obtain that
\[
\myH^i(\myR (\pi_{\kappa})_* \O_{E_{\kappa}})_{Q_{\kappa}} \hookrightarrow H^{i+1}_{Q_{\kappa}}( R_{\kappa} ).
\]
But the left side is annihilated by a power of Frobenius by \cite[Theorem 7.1]{SchwedeFInjectiveAreDuBois} and Frobenius acts injectively on the right by hypothesis.  The result follows.
\end{proof}

\begin{theorem}
\label{thm.FInjectiveTypeDeforms}
Let $(R,\m)$ be a local ring essentially of finite type over $\mathbb{C}$ and let $x$ be a nonzerodivisor on $R$. Suppose $R/xR$ has dense $F$-injective type. Then for infinitely many $p>0$, the Frobenius action $x^{p-1}F$ on $H_{\m_p}^i(R_p)$ is injective for every $i$, where $(R_p,\m_p)$ is the reduction mod $p$ of $R$. In particular, $R$ has dense $F$-injective type.
\end{theorem}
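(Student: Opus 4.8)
The plan is to import a characteristic-zero statement — the exact sequence produced by \autoref{theorem.SheafversionSurj} for the Cartier divisor $\Div(x)$ — into characteristic $p>0$ via reduction mod $p$, and then run a short Frobenius argument. First I would record two preliminary observations. The ring $R$ is reduced: its nilradical $\mathfrak n$ maps to a nilpotent ideal of the reduced ring $R/xR$, so $\mathfrak n\subseteq xR$, hence $\mathfrak n=x\mathfrak n$ since $x$ is a nonzerodivisor, hence $\mathfrak n=0$ by Nakayama; moreover $x\in\m$ (otherwise $R/xR=0$ and there is nothing to prove). Also, by \autoref{thm.DBdenseF-injectivetypeLocal}, $R/xR$ is Du Bois. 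Now apply \autoref{theorem.SheafversionSurj} with $X=\Spec R$, $Z=\emptyset$ and $H=\Div(x)$, and localize at the closed point: for every $i$ the sequence
\[
0\longrightarrow H_\m^{i}(R/xR)\xrightarrow{\ \delta\ } H_\m^{i+1}(R)\xrightarrow{\ \cdot x\ } H_\m^{i+1}(R)\longrightarrow 0
\]
is exact, where $\delta$ is the connecting homomorphism of $0\to R\xrightarrow{\cdot x}R\to R/xR\to 0$. In particular $\cdot x$ is surjective on every $H_\m^\bullet(R)$, $\delta$ is injective, and $\im\delta=\ker\!\big(\cdot x\mid H_\m^{i+1}(R)\big)$.

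Next I would reduce modulo $p$ using \autoref{setup.ReductionToCharP}, spreading out over a finitely generated $\mathbb Z$-algebra $A\subseteq\mathbb C$ and keeping track of the local cohomology modules above (realized as direct limits of $\Ext$ modules) together with the maps $\delta$ and $\cdot x$; after shrinking $\Spec A$ everything is $A$-flat with formation commuting with base change, so that for every maximal ideal $\n$ of $A$, writing $\kappa=A/\n$, the sequence $0\to H_{\m_\kappa}^{i}(R_\kappa/x_\kappa R_\kappa)\xrightarrow{\delta_\kappa} H_{\m_\kappa}^{i+1}(R_\kappa)\xrightarrow{\cdot x_\kappa} H_{\m_\kappa}^{i+1}(R_\kappa)\to 0$ is exact for all $i$ (with $x_\kappa$ a nonzerodivisor on $R_\kappa$, as in the Setup). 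Since $R/xR$ has dense $F$-injective type, after further shrinking $\Spec A$ I may also assume there is a dense set $W$ of maximal ideals $\n$ for which $R_\kappa/x_\kappa R_\kappa$ is $F$-injective.

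Now fix $\n\in W$, with $\kappa$ of characteristic $p$, and drop subscripts. The maps $x^{p-1}F$ on the subobject $R$, $F$ on the middle $R$, and $F$ on the quotient $R/xR$ constitute a morphism of short exact sequences from $0\to R\xrightarrow{\cdot x}R\to R/xR\to 0$ to its Frobenius twist (the crucial left square commutes because $x\cdot x^{p-1}F(r)=x^p r^p=F(xr)$), so by naturality of connecting homomorphisms $x^{p-1}F\circ\delta=\delta\circ F$. Fix $j$ and set $M=H_\m^j(R)$, $K=\ker\!\big(x^{p-1}F\mid M\big)$. Then $K$ is an $R$-submodule stable under $\cdot x$, since $x^{p-1}F(x\xi)=x^p\cdot x^{p-1}F(\xi)$; and $\cdot x\colon K\to K$ is injective: if $\xi\in K$ with $x\xi=0$, then $\xi=\delta(\bar\xi)$ for some $\bar\xi\in H_\m^{j-1}(R/xR)$, whence $0=x^{p-1}F(\xi)=\delta(F\bar\xi)$ forces $F\bar\xi=0$ ($\delta$ injective) and then $\bar\xi=0$ ($F$-injectivity of $R/xR$), so $\xi=0$. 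But $x\in\m$ acts locally nilpotently on $M=H_\m^j(R)$, hence on $K$, and a locally nilpotent injective endomorphism of $K$ forces $K=0$. Thus $x^{p-1}F$ — and \emph{a fortiori} $F$ — is injective on $H_\m^i(R_\kappa)$ for every $i$, so $R_\kappa$ is $F$-injective; as $W$ is dense, $R$ has dense $F$-injective type, completing the proof.

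The step I expect to be the main obstacle is the reduction modulo $p$: one must make the characteristic-zero exact sequence base-change correctly — its content concerning the possibly non-finitely-generated modules $H_\m^i(R)$, which is why one must choose the model carefully à la \autoref{setup.ReductionToCharP} — while simultaneously securing $F$-injectivity of the reductions of $R/xR$ over a common dense parameter set. By contrast, the ring-theoretic Frobenius computation in characteristic $p$ is short once these ingredients are in place. Note also that the naive characteristic-$p$ analogue is false — it would require $R/xR$ to be $F$-pure, \cf\autoref{rem.SmallDefOfLocalCohomologyAndFPure} — which is precisely why the characteristic-zero detour is essential.
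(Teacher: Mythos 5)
Your proposal is correct and follows essentially the same route as the paper: Du Bois-ness of $R/xR$ via \autoref{thm.DBdenseF-injectivetypeLocal}, the exact sequence from \autoref{theorem.SheafversionSurj} applied with $Z=\emptyset$, reduction mod $p$, and the Frobenius chase on the resulting commutative ladder (your ``locally nilpotent injective endomorphism of $K$'' argument is the paper's socle argument in different clothing). The one step where the paper is more concrete than your sketch is the spreading-out: rather than base-changing the non-finitely-generated local cohomology modules directly, it Matlis-dualizes the surjectivity of $\cdot x$ on $H^i_\m(R)$ to injectivity of $\cdot x$ on the finitely generated modules $\Ext^i_T(R,T)$, spreads those finitely many modules and maps out over $A$, and only then dualizes back over each $\kappa=A/\frn$ to recover the exact sequence of local cohomology in characteristic $p$.
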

\begin{proof}
By \autoref{thm.DBdenseF-injectivetypeLocal}, $R/xR$ has Du Bois singularities.  By \autoref{theorem.SheafversionSurj} (taking $Z = \emptyset$), $H^i_{\m}(R) \xrightarrow{\cdot x} H^{i}_{\m}(R)$ surjects for all $i$.  By Matlis duality, $\Ext^i_T(R, T) \xrightarrow{\cdot x} \Ext^i(R, T)$ injects for all $i$.  Spreading this out to $A$, and possibly inverting an element of $A$, we see that $\Ext^i_{T_A}(R_A, T_A) \xrightarrow{\cdot x} \Ext^i_{T_A}(R_A, T_A)$ injects for all $i$ (note there are only finitely many $\Ext$ to consider).  Inverting another element of $A$ if necessary, we deduce that
\[
\Ext^i_{T_\kappa}(R_\kappa, T_\kappa) \xrightarrow{\cdot x} \Ext^i_{T_\kappa}(R_\kappa, T_\kappa)
\]
injects for every $i$ and each maximal ideal $\frn \subseteq A$, setting $\kappa = A/\frn$.  We abuse notation and let $x$ denote the image of $x$ in $R_{\kappa}$.  Applying Matis duality again and considering the Frobenius on the local cohomology modules, we have the following collection of short exact sequences for each $i$.
\[
\xymatrix{
0 \ar[r] & H_{\m_\kappa}^i(R_\kappa/xR_\kappa) \ar[r] \ar[d]^F & H_{\m_\kappa}^{i+1}(R_\kappa) \ar[r]^{\cdot x} \ar[d]^{x^{p-1}F} & H_{\m_\kappa}^{i+1}(R_\kappa)\ar[r]\ar[d]^F & 0 \\
0 \ar[r] & H_{\m_\kappa}^i(R_\kappa/xR_\kappa) \ar[r]  & H_{\m_\kappa}^{i+1}(R_\kappa) \ar[r]^{\cdot x}  & H_{\m_\kappa}^{i+1}(R_\kappa)\ar[r] & 0
}
\]
where $p$ is the characteristic of $\kappa$ and $F$ denotes the natural Frobenius action on $H_{\m_\kappa}^i(R_\kappa/xR_\kappa)$ and $H_{\m_\kappa}^{i+1}(R_\kappa)$.

At this point recall that $R/xR$ has dense $F$-injective type. It follows that for infinitely many $p>0$, if the residue field $\kappa$ of $A$ has characteristic $p>0$ then the natural Frobenius action on $H_{\m_\kappa}^i(R_\kappa/xR_\kappa)$ is injective for every $i$. Now chasing the above diagram, if $x^{p-1}F$ is not injective, then we can pick $0\neq y\in \soc(H_{\m_\kappa}^{i+1}(R_\kappa))\cap \ker(x^{p-1}F)$. Since $y$ is in the socle of $H_{\m_\kappa}^{i+1}(R_\kappa)$, it maps to zero under multiplication by $x$. But then $0\neq y\in H_{\m_\kappa}^i(R_\kappa/xR_\kappa)$, and chasing the diagram we find that $x^{p-1}F(y)\neq 0$, which is a contradiction.

We have established that for infinitely many $p$, after we do reduction to $p$, the Frobenius action $x^{p-1}F$ on $H_{\m_\kappa}^{i+1}(R_\kappa)$ is injective for every $i$. This certainly implies the natural Frobenius action $F$ on $H_{\m_\kappa}^{i+1}(R_\kappa)$ is injective for every $i$. Hence $R$ has dense $F$-injective type.
\end{proof}

\begin{remark}
It is still unknown whether $F$-injective singularities in characteristic $p>0$ deform (and this has been open since \cite{FedderFPureRat}). Our theorem is in support of this conjecture: it shows this is true ``in characteristic $p \gg 0$". For the most recent progress on deformation of $F$-injectivity in characteristic $p>0$, we refer to \cite{HoriuchiMillerShimomoto}.
On the other hand, \autoref{thm.FInjectiveTypeDeforms} provides evidence for the weak ordinarity conjecture \cite{MustataSrinivasOrdinary} because of the relation between the weak ordinarity conjecture and the conjecture that Du~Bois singularities have dense $F$-injective type \cite{BhattSchwedeTakagiweakordinaryconjectureandFsingularity}.
\end{remark}

\bibliographystyle{skalpha}
\bibliography{MainBib}
\end{document}